\documentclass[12pt,a4paper]{article}
\usepackage[utf8]{inputenc}
\usepackage[T5]{fontenc}
\usepackage{amsmath, amssymb, amsthm, verbatim, hyperref, dsfont}
\usepackage{mathrsfs}
\usepackage[dvipsnames]{xcolor}
\usepackage{ragged2e}
\usepackage{marginnote}
\usepackage{enumerate}
\usepackage{makecell}
\usepackage{longtable}
\usepackage{epsfig}
\usepackage{tikz}
\usepackage{ytableau}
\usepackage{hyperref}
\usepackage{caption}
\usepackage{subcaption}
\hypersetup{hidelinks}
\usepackage{fancyvrb}
\usepackage{tikz-cd}
\usepackage[left=2.50cm, right=2.50cm, top=2.00cm, bottom=2.00cm]{geometry}
\usetikzlibrary {arrows.meta}
\usepackage{ifthen}
\def\LR{\operatorname{LR}}
\newcommand{\BK}{\operatorname{BK}}
\newcommand{\Hive}{\operatorname{Hive}}
\newcommand{\Skep}{\operatorname{Skep}}
\newcommand{\SkewHive}{\operatorname{SkewHive}}
\newcommand{\SkewSkep}{\operatorname{SkewSkep}}
\newcommand{\hive}{\operatorname{hive}}
\newcommand{\skep}{\operatorname{skep}}
\newcommand{\skewhive}{\operatorname{skewhive}}
\newcommand{\skewskep}{\operatorname{skewskep}}
\newcommand{\ttop}{\operatorname{top}}
\newcommand{\bbot}{\operatorname{bot}}
\newcommand{\hht}{\operatorname{ht}}
\newcommand{\Ht}{\operatorname{Ht}}

\definecolor{goodgreen}{rgb}{0.01, 0.75, 0.24}

\theoremstyle{plain}

\newtheorem{thm}{Theorem}[section]

\newtheorem{cor}[thm]{Corollary}
\newtheorem{lemma}[thm]{Lemma}
\newtheorem{conjecture}[thm]{Conjecture}

\theoremstyle{definition}
\newtheorem{definition}[thm]{Definition}
\newtheorem{example}[thm]{Example}
\theoremstyle{remark}

\newtheorem{remark}[thm]{Remark}
\newtheorem{bijection}[thm]{Bijection}

\newcommand{\ZZ}{\mathbb{Z}}

\newcommand{\RR}{\mathbb{R}}

\newcommand{\YY}{\mathbb{Y}}

\newcommand{\SSS}{\mathcal{S}}
\newcommand{\WW}{\mathcal{W}}

\newcommand{\Z}{\mathbb{Z}}
\newcommand{\ti}{\widetilde}
\newcommand{\sbs}{\mathbb{\subseteq}}
\usepackage{mathtools}
\DeclareMathOperator{\sort}{sort}
\DeclarePairedDelimiter\floor{\lfloor}{\rfloor}
\DeclarePairedDelimiter\abs{\lvert}{\rvert}
\DeclarePairedDelimiter\ceil{\lceil}{\rceil}

\title{\vspace{-1em} 
Skew Hives, Skew Skeps, Skew Schur Log-Concavity}

\author{\vspace{-1em} \\
\parbox{5cm}{\centering
Tuong Le\thanks{\href{mailto:tuongle@princeton.edu}{tuongle@princeton.edu}. Princeton University.}\\
{\small Lê Vũ Tường}
}%
\hspace{1em}%
\parbox{5cm}{\centering
Son Nguyen\thanks{\href{mailto:sonnvt@mit.edu}{sonnvt@mit.edu}. Massachusetts Institute of Technology.}\\
{\small Nguyễn Văn Thanh Sơn}
}
}

\date{\vspace{-2em}}
\allowdisplaybreaks

\begin{document}
\ytableausetup{centertableaux}

\maketitle

    \begin{abstract}
    Knutson and Tao's hives is a combinatorial model to compute Littlewood--Richardson coefficients. Similar to hives, Speyer introduced skeps and used them to prove a Schur log-concavity conjecture by Lam--Postnikov--Pylyavskyy. We first introduce skew hive and skew skep models, which specialize to both hives and skeps, and use this to prove a skew Schur log-concavity result generalizing Lam--Postnikov--Pylyavskyy conjecture. As a consequence, we obtain some log-concavity results concerning Newell--Littlewood numbers and shadow skew Schur functions. Finally, we explain bijections between (skew) hives, (skew) skeps, and peelable tableaux by Nguyen--Nguyen--Woodruff, answering Speyer's question.
    \end{abstract}

\tableofcontents


\section{Introduction}

\subsection{Schur log-concavity}

    Let $\lambda,\mu$ be two dominant weights of $\mathfrak{sl}_n$. Recall that the weight lattice is $\ZZ^n/(1,\ldots,1)$, so we view $\lambda,\mu$ as partitions whose $n$th part is zero. Let $\alpha_{ij} = e_i-e_j$ be the roots of the type A root system. An \textit{alcoved polytope} is the intersection of some half-spaces bounded by the hyperplanes $\{\tau~|~\langle \alpha_{ij},\tau\rangle = m\}$. See \cite{lam2007alcoved,lam2018alcoved} for a study of alcoved polytopes. Given two weights $\lambda,\mu$, the \textit{parallelepiped} $P_{\lambda,\mu}$ is the minimal alcoved polytope containing $\lambda$ and $\mu$. Lam--Postninkov--Pylyavskyy made a fundamental conjecture about when a tensor product $V_{\lambda}\otimes V_\mu$ is contained in $V_{\nu}\otimes V_\rho$ in terms of the parallelepiped. This conjecture can be phrased in terms of Schur positivity as follows.

    \begin{conjecture}[{\cite[Conjecture 15.1]{pylyavskyy2007comparing}}]\label{conj:log-concavity}
        If $\lambda + \mu = \nu + \rho$ and $\nu,\rho\in P_{\lambda,\mu}$, then $s_{\nu}s_\rho - s_{\lambda}s_\mu$ is Schur positive.
    \end{conjecture}

    This conjecture was open for 20 years, during which many special cases of were proved using different techniques \cite{king2008schur, purbhoo2008on, mcnamara2009positivity, ballantine2014schur, dobrovolska2007products, nguyen2025shuffle}. It was finally resolved recently by Speyer in \cite{speyer2026log}. The main tool of Speyer's proof is the \textit{skep}, which is a combinatorial model to compute the Littlewood--Richardson coefficients. Skeps can be obtained from hives, introduced by Knutson--Tao in \cite{knutson1999honeycomb}, via the octahedron recurrence. The key difference is that in skeps, traversing the lower and left boundary gives $(\lambda_1,\mu_1,\ldots,\lambda_n,\mu_n)$ while in hives, this gives $(\lambda_1,\ldots,\lambda_n,\mu_1,\ldots,\mu_n)$. This allows Speyer to apply $L$-convexity results to prove the conjecture.

    Our first goal is to generalize hives and skeps to skew hives and skew skeps, which can be used to compute the Littlewood--Richardson coefficients when multiplying two skew Schur functions. Then, using Speyer's technique, we obtain a skew Schur log-concavity analog of Conjecture \ref{conj:log-concavity}.

    \begin{thm}\label{thm:skew-schur-log}
        Let $\lambda,\mu,\nu,\rho, \ti\lambda,\ti\mu,\ti\nu,\ti\rho$ be partitions, not necessarily ending in $0$. 
        Denote $(\lambda,\mu) = (\lambda_1,\ldots,\lambda_n,\mu_1,\ldots,\mu_n)$. If $(\lambda,\mu) + (\nu,\rho) = (\ti\lambda,\ti\mu) + (\ti\nu,\ti\rho)$, and $(\ti\lambda,\ti\mu),(\ti\nu,\ti\rho)\in P_{(\lambda,\mu),(\nu,\rho)}$, then $s_{\ti\lambda/\ti\mu}s_{\ti\nu/\ti\rho} - s_{\lambda/\mu}s_{\nu/\rho}$ is Schur positive.
    \end{thm}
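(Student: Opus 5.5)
Following Speyer's proof of Conjecture \ref{conj:log-concavity}, the plan is to realize the coefficient of $s_\kappa$ in $s_{\lambda/\mu}s_{\nu/\rho}$ as the number of lattice points in a polytope cut out by rhombus-type inequalities, namely a \emph{skew skep}. In that polytope the data $(\lambda,\mu)$ and $(\nu,\rho)$ should enter linearly through boundary values. We then transport lattice points between the two sides using $L$-convexity. The argument has three steps.

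\textbf{Step 1: skew hives.} For fixed $\kappa$, the first aim is to show that $\langle s_{\lambda/\mu}s_{\nu/\rho}, s_\kappa\rangle$ counts integer points of a "skew hive" polytope. Since $s_{\lambda/\mu}=\sum_\alpha c^\lambda_{\mu\alpha}s_\alpha$, the coefficient equals
\[
\sum_{\alpha,\beta} c^{\lambda}_{\mu\alpha}\, c^{\nu}_{\rho\beta}\, c^{\kappa}_{\alpha\beta}.
\]
This sum should be modeled by gluing three hives along the common boundaries $\alpha,\beta$. The glued object is a larger triangular or trapezoidal array satisfying rhombus inequalities, with boundary values determined by the partial sums of $\lambda,\mu,\nu,\rho,\kappa$, and with the interior edges $\alpha,\beta$ free. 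Gluing adjacent hives along an edge, the rhombus inequalities straddling the seam should encode exactly that both pieces are valid. This gives a single polytope whose boundary traversal reads $(\lambda_1,\ldots,\lambda_n,\mu_1,\ldots,\mu_n,\ldots)$ in hive order.

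\textbf{Step 2: skew skeps via the octahedron recurrence.} Next, apply Speyer's tropical octahedron-recurrence moves to convert the skew hive into a skew skep. In the skew skep, traversing the relevant boundary reads the interleaved sequence $(\lambda_1,\nu_1,\lambda_2,\nu_2,\ldots)$ and $(\mu_1,\rho_1,\ldots)$, with $\kappa$ on the remaining side. The recurrence is a piecewise-linear bijection on lattice points preserving the rhombus-inequality structure, so the lattice-point counts agree. The resulting skew skep set should be cut out by inequalities of the form $x_a - x_b \le c$ and $x_a+x_b-x_c-x_d$ constraints. After a unimodular change of coordinates, it becomes the set of lattice points of an $L^\natural$-convex function's domain, or of a discrete midpoint-convex set, jointly in the interior entries and the boundary sequence.

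\textbf{Step 3: the exchange.} Write $T(a)$ for the set of skew skeps with boundary sequence $a$. Given $(\ti\lambda,\ti\mu),(\ti\nu,\ti\rho)\in P_{(\lambda,\mu),(\nu,\rho)}$ with equal sums, the $L$-convexity exchange should give an injection
\[
T\bigl((\lambda,\mu),(\nu,\rho),\kappa\bigr)\to T\bigl((\ti\lambda,\ti\mu),(\ti\nu,\ti\rho),\kappa\bigr),
\]
as in Speyer. More precisely, to a skep $S$ built from the pair $X$ with data $(\lambda,\mu),(\nu,\rho)$ we associate a pair of skeps $(S',S'')$ in which the coordinates are redistributed alcove-wise. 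Here "in the parallelepiped" means precisely that the new boundaries are obtained by the same redistribution applied to the $\lambda/\nu$ interleaving. This is where the interleaved boundary is essential: the alcoved condition on $2n$ coordinates $(\lambda,\mu)$ versus $(\nu,\rho)$ must match the pairing of coordinates in the skep. Injectivity then gives $\langle s_{\ti\lambda/\ti\mu}s_{\ti\nu/\ti\rho},s_\kappa\rangle\ge\langle s_{\lambda/\mu}s_{\nu/\rho},s_\kappa\rangle$ for all $\kappa$, which is the theorem.

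The main obstacle is Step 2. We must design the skew skep so that the octahedron moves interleave both the pair $(\lambda,\nu)$ and the pair $(\mu,\rho)$ simultaneously, and verify that the resulting inequality system is of $L$-convex type in the combined boundary coordinate vector $(\lambda,\mu,\nu,\rho)$. In particular, the $\mu,\rho$ part enters with "negative" orientation, being subtracted, so the sign conventions in the alcove condition must be checked to align. If that fails, I would instead treat the skew skep as a hive in $3n$ or $4n$ variables and use Speyer's original theorem on the larger array directly.
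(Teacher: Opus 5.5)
Your architecture matches the paper's: skew hives, then the octahedron recurrence to skew skeps with interleaved boundary, then Speyer's $L$-convexity. But Step 3, which carries the whole inequality, has a genuine gap.

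The skew-skep polytope is not $L$-convex jointly in its interior and boundary entries. Its rhombus inequalities are four-term, not difference constraints. With the boundary data as coordinates it cannot be $L$-convex at all: the marginal theorem would then make the unconditioned count $L$-log-concave, and this already fails in the non-skew case (the introduction's negative answer to Speyer's question).

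The missing idea is to fix the entries $g^+$ on the even positions $\Delta_{2n}^+=\{(i,j)\in\Delta_{2n}: i+j\equiv 0 \bmod 2\}$. Every skew-skep rhombus inequality has exactly one even-position and one odd-position entry on each side. So once $g^+$ is fixed, the odd entries obey only constraints of the form $g_{ij}-g_{k\ell}\le c$. Meanwhile $g^\nwarrow=\kappa$ and the even boundary entries (partial sums of $\alpha=\lambda+\nu$ and $\beta=\mu+\rho$) are determined by $g^+$. The fiber is therefore an $L$-convex set whose odd boundary coordinates are a translate of $(x,y)$. Theorem~\ref{thm:marginall} then makes $F_{g^+}(x,y)=\bigl|\SkewSkep^\kappa_{x/y,(\alpha-x)/(\beta-y)}(g^+)\bigr|$ $L$-log-concave.

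Even with this, you get only the product inequality
\[
F_{g^+}(\lambda,\mu)\,F_{g^+}(\nu,\rho)\le F_{g^+}(\ti\lambda,\ti\mu)\,F_{g^+}(\ti\nu,\ti\rho),
\]
not an injection of one fiber into another. Your map from a skep to a pair $(S',S'')$ is never defined, and nothing in $L$-convexity supplies such a map.

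The paper closes the argument by using the octahedron recurrence a second time. The sets $S^{\bbot}_{\skewskep}$ and $S^{\ttop}_{\skewskep}$ share the $t=0$ layer, which is exactly the even positions. So Theorem~\ref{thm:skew-hive-skep} gives a bijection $\SkewSkep^\kappa_{\lambda/\mu,\nu/\rho}(g^+)\to\SkewSkep^\kappa_{\nu/\rho,\lambda/\mu}(g^+)$, that is, $F_{g^+}(\lambda,\mu)=F_{g^+}(\nu,\rho)$, and likewise for the tilded data. Taking square roots and summing over $g^+$ gives the theorem. Plain commutativity $c^\kappa_{\lambda/\mu,\nu/\rho}=c^\kappa_{\nu/\rho,\lambda/\mu}$ would not suffice, because a sum of $L$-log-concave functions need not be $L$-log-concave.

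Your Step 1 is also unsound as worded. Rhombus inequalities straddling a seam are extra constraints, not an encoding that the pieces are valid. So a glued array counts $\sum_{\alpha,\beta}c^\lambda_{\mu\alpha}c^\nu_{\rho\beta}c^\kappa_{\alpha\beta}$ only if those constraints are proved redundant. The paper avoids gluing altogether: it writes $s_{\lambda/\mu}s_{\nu/\rho}=s_{(\lambda\ast_{\nu_1}\nu)/(\mu\ast_{\nu_1}\rho)}$ and takes a single size-$2n$ hive. It then reverses rows, subtracts multiples of $\nu_1$ along diagonals, and checks that the one family of inequalities it drops (on $i+j=n-1$) is implied.
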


    Theorem~\ref{thm:skew-schur-log} simultaneously generalizes Conjecture \ref{conj:log-concavity} and various skew-schur log-concavity results proved by Lam--Postnikov--Pylyavskyy: Theorem 5, Theorem 12 and Corollary 14 of  {\cite{LPP07}}.

\subsection{Non-$L$-log-concavity of $c^\nu_{\lambda,(\pi-\lambda)}$}
    Speyer's proof techinique shows that for fixed $\nu$ and $\pi$ with $|\nu| = |\pi|$, the function $\lambda\mapsto c_{\lambda,(\pi-\lambda)}^{\nu}$ is a sum of $L$-log-concave functions. Since sums of $L$-log-concave functions need not be $L$-log-concave, Speyer asked \cite[Question 2.17]{speyer2026log} if this function is always $L$-log-concave. Unfortunately, the answer is no, as the following example shows.
    
    \begin{example}
        Let $\nu = (9, 5, 3), \pi = (6,5,4,2), x = (6,4,3), y = (5,3,1).$ Let $f(\lambda) := \lambda\mapsto c_{\lambda,(\pi-\lambda)}^{\nu}.$ Then $f(x)f(y) = 6 > 5 = f(\ceil*{\frac{x+y}2}) f(\floor*{\frac{x+y}2}).$
    \end{example}
    
\subsection{Newell--Littlewood numbers and Koike-Terada log-concavity}

    \begin{definition}[\cite{newell1951modification,littlewood1958products}]
        The Newell--Littlewood numbers are defined as
        \[ N_{\mu,\nu,\lambda} = \sum_{\alpha,\beta,\gamma}c_{\alpha,\beta}^\mu c_{\alpha,\gamma}^\nu c_{\beta,\gamma}^\lambda, \]
        where the indices are partitions with at most $n$ rows.
    \end{definition}

    Newell--Littlewood numbers are  tensor product multiplicities for type $B,C,D$ classical Lie groups. Specifically, let $W$ be a complex vector space, with a fixed nondegenerate symplectic or orthogonal form $\omega$. Let $G$ be the subgroup of $\operatorname{SL}(W)$ preserving $\omega$, then $G$ is either $\operatorname{SO}_{2n}, \operatorname{SO}_{2n+1}$, or $\operatorname{Sp}_{2n}$, depending on $\dim(W)$ and $\omega$. Let $\lambda$ be a dominant weight of $\mathfrak{sl}_n$, then Weyl's construction \cite{weyl1946classical} gives irreducible $G$-modules $\mathds{S}_{[\lambda]}(W)$. Note that for type $D$, irreducibility holds since we assume $\lambda_n = 0$. In the stable range $\ell(\mu)+ \ell(\nu)\leq n$, Koike--Terada \cite{koike1987young} proved tensor product multiplicities actually do not depend on $G$. In particular,
    \[ \mathds{S}_{[\mu]}(W) \otimes \mathds{S}_{[\nu]}(W) \cong \bigoplus_{\lambda} \mathds{S}_{[\lambda]}(W)^{\oplus N_{\mu,\nu,\lambda}}. \]
    Analogous to Schur functions, we have the Koike–Terada basis $\{s_{[\lambda]}\}$ of universal characters of $\mathds{S}_{[\lambda]}(W)$ for $\operatorname{Sp}$. These functions satisfy
    \[ s_{[\mu]}s_{[\nu]} = \sum_\lambda N_{\mu,\nu,\lambda}s_{[\lambda]}. \]

    Recently, the combinatorics of Newell--Littlewood numbers have been extensively studied in \cite{GOY21,gao2022newell,gao2025newell,min2025proof}. In terms of skew Schur functions, \cite[Proposition 2.3]{GOY21} showed that
    \[ N_{\mu,\nu,\lambda} = \sum_\alpha\langle s_{\mu/\alpha}s_{\nu/\alpha},s_{\lambda}\rangle, \]
    where the sum is over $\alpha\subseteq \mu\wedge\nu$. Thus, from Theorem \ref{thm:skew-schur-log}, the following analogue of Conjecture \ref{conj:log-concavity} for Koike--Terada positivity is immediate.

    \begin{thm}\label{thm:new-lit}
        Let $\lambda, \mu, \nu, \rho$ be dominant weights of $\mathfrak{sl}_n$, viewed as partitions ending in $0$.  If $\lambda + \mu = \nu + \rho$ and $\nu,\rho\in P_{\lambda,\mu}$, then $s_{[\nu]}s_{[\rho]} - s_{[\lambda]}s_{[\mu]}$ is Koike--Terada positive.
    \end{thm}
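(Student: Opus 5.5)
We derive Theorem~\ref{thm:new-lit} from Theorem~\ref{thm:skew-schur-log} through the identity $N_{\mu,\nu,\lambda} = \sum_\alpha\langle s_{\mu/\alpha}s_{\nu/\alpha},s_{\lambda}\rangle$ of \cite[Proposition 2.3]{GOY21}. Since $s_{[\mu]}s_{[\nu]} = \sum_\kappa N_{\mu,\nu,\kappa}s_{[\kappa]}$, Koike--Terada positivity of $s_{[\nu]}s_{[\rho]} - s_{[\lambda]}s_{[\mu]}$ means that $N_{\nu,\rho,\kappa}\ge N_{\lambda,\mu,\kappa}$ for every $\kappa$. It therefore suffices to show that
\[ \sum_\alpha s_{\nu/\alpha}s_{\rho/\alpha} - \sum_\alpha s_{\lambda/\alpha}s_{\mu/\alpha} \]
is Schur positive. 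For this we compare the two sums term by term, pairing equal $\alpha$.

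Fix a partition $\alpha$ with at most $n$ parts. In Theorem~\ref{thm:skew-schur-log}, take the outer shapes $\lambda,\mu$ (playing the roles of that theorem's $\lambda,\nu$), the inner shapes both equal to $\alpha$, and the new outer shapes $\nu,\rho$. Concretely, we use the $2n$-vectors $(\lambda,\alpha)$ and $(\mu,\alpha)$, and the tilde vectors $(\nu,\alpha)$ and $(\rho,\alpha)$. The hypotheses then need checking.

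\textbf{Sum condition.} We have $(\lambda,\alpha)+(\mu,\alpha) = (\lambda+\mu,2\alpha) = (\nu+\rho,2\alpha) = (\nu,\alpha)+(\rho,\alpha)$, which holds.

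\textbf{Parallelepiped condition.} We need $(\nu,\alpha)\in P_{(\lambda,\alpha),(\mu,\alpha)}$, and likewise for $\rho$. For two points $x,y$, the minimal alcoved polytope $P_{x,y}$ is cut out by the inequalities
\[ \min(x_i-x_j,\,y_i-y_j)\le z_i-z_j\le \max(x_i-x_j,\,y_i-y_j) \]
for all $i,j$ (possibly together with a coordinate-sum condition, which is satisfied here). The indices split into three cases.
\begin{itemize}
\item If both $i,j$ lie in the first block, the inequality is exactly the hypothesis $\nu\in P_{\lambda,\mu}$.
\item If both lie in the second block, all three differences equal $\alpha_i-\alpha_j$, so the inequality is trivial.
\item If $i$ lies in the first block and $j$ in the second, the inequality reads $\min(\lambda_i,\mu_i)-\alpha_j\le \nu_i-\alpha_j\le\max(\lambda_i,\mu_i)-\alpha_j$. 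This reduces to $\nu_i$ lying between $\lambda_i$ and $\mu_i$, which is the $(i,n{+}1)$-type constraint coming from the last coordinate being $0$ in $\mathfrak{sl}_n$. That is, it is the $\alpha_{i n}$ inequality of $P_{\lambda,\mu}$ in the normalization $\lambda_n=\mu_n=\nu_n=0$.
\end{itemize}
So the hypotheses hold, and Theorem~\ref{thm:skew-schur-log} shows that $s_{\nu/\alpha}s_{\rho/\alpha}-s_{\lambda/\alpha}s_{\mu/\alpha}$ is Schur positive. By the usual convention this holds even when $\alpha\not\subseteq$ some of the outer shapes, since then $s_{\cdot/\alpha}=0$. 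If the theorem requires containment, we instead first note that $\alpha\subseteq\lambda\wedge\mu$ together with $\min(\lambda_i,\mu_i)\le\nu_i,\rho_i$ gives $\alpha\subseteq\nu\wedge\rho$. Summing over all $\alpha$ and pairing with $s_\kappa$ then gives $N_{\nu,\rho,\kappa}-N_{\lambda,\mu,\kappa}\ge0$.

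\textbf{Main obstacle.} The delicate step is the cross-block inequalities in the parallelepiped. One must confirm that the paper's $P_{\lambda,\mu}$ for $\mathfrak{sl}_n$ weights, normalized with last part $0$, genuinely forces $\min(\lambda_i,\mu_i)\le\nu_i\le\max(\lambda_i,\mu_i)$ coordinatewise; this holds because $\nu_i-\nu_n$ is bounded by the corresponding $\lambda$ and $\mu$ differences. One must also check that the alcoved-polytope description for $2n$-vectors in Theorem~\ref{thm:skew-schur-log} is the one given by these pairwise difference bounds. A further point to confirm is that the stable-range and Koike--Terada basis conventions allow $N$ to be read off as the structure constants.
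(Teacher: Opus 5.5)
Your proposal is correct and follows essentially the same route as the paper. The paper reduces to $N_{\lambda,\mu,\kappa}\le N_{\nu,\rho,\kappa}$, expands via the identity from \cite{GOY21}, and applies Theorem~\ref{thm:skew-schur-log} for each fixed $\alpha$. Your block-by-block check of the cross-block inequalities is exactly the paper's lemma that $\ti\mu\in L^\natural(\mu,\nu)$ if and only if $(\ti\mu,\alpha)\in L((\mu,\alpha),(\nu,\alpha))$, combined with Lemma~\ref{lm:ends0lnat}, and your remark that $\lambda\wedge\mu\subseteq\nu\wedge\rho$ handles the range of summation over $\alpha$, which the paper leaves implicit.
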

    
    Theorem~\ref{thm:new-lit} generalizes \cite[Theorem 7.4]{GOY21}.

\subsection{Shadow skew Schur functions}

    Recently, Chan--Chen--Pak--Soskin \cite{chan2026correlation} extended Lam--Postnikov--Pylyavskyy inequality in \cite{LPP07} to \textit{(skew) Ahlswede--Daykin--Schur inequality}. Among their many applications, they introduced \textit{shadow (skew) Schur functions} and generalized Lam--Postnikov--Pylyavskyy's Schur log-concavity results \cite[Section 3]{chan2026correlation}.
    
    Let $\lambda$ be a partition with at most $\ell$ nonzero parts, and $k\geq \ell$. Let
    \[
    \YY(\lambda;k,\ell) = \{\mu = (\mu_1,\ldots,\mu_k)~:~\mu_1 = \lambda_1,\ldots,\mu_\ell = \lambda_\ell\},
    \]
    and define the shadow Schur function to be
    \[
    S_{\lambda}^{(k,\ell)} = \sum_{\mu \in \YY(\lambda;k,\ell)} s_\mu.
    \]
    In particular, when $k = \ell$ or $\lambda_\ell = 0$, we have $\YY(\lambda;k,\ell) = \{\lambda\}$, so $S_{\lambda}^{(k,\ell)} = s_{\lambda}$. Shadow skew Schur functions are defined similarly:
    \[
    S_{\lambda/\mu}^{(k,\ell)} = \sum_{\lambda'\in \YY(\lambda;k,\ell), \mu' \in \YY(\mu;k,\ell)} s_{\lambda'/\mu'}.
    \]
    We further generalize their results \cite[Theorem 3.2 and 3.7]{chan2026correlation} and Theorem \ref{thm:skew-schur-log} to shadow skew Schur functions.

    \begin{thm}\label{thm:shadow-skew-schur-log}
        Let $\lambda,\mu,\nu,\rho, \ti\lambda,\ti\mu,\ti\nu,\ti\rho$ be partitions with at most $\ell$ nonzero parts, and $k\geq \ell$. If $(\lambda,\mu) + (\nu,\rho) = (\ti\lambda,\ti\mu) + (\ti\nu,\ti\rho)$, and $(\ti\lambda,\ti\mu),(\ti\nu,\ti\rho)\in P_{(\lambda,\mu),(\nu,\rho)}$, then $S^{(k,\ell)}_{\ti\lambda/\ti\mu} S^{(k,\ell)}_{\ti\nu/\ti\rho} - S^{(k,\ell)}_{\lambda/\mu} S^{(k,\ell)}_{\nu/\rho}$ is Schur positive.
    \end{thm}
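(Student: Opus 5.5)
The plan is to reduce Theorem~\ref{thm:shadow-skew-schur-log} to Theorem~\ref{thm:skew-schur-log} by expanding both products term by term and pairing the terms. By definition,
\[
S^{(k,\ell)}_{\lambda/\mu}S^{(k,\ell)}_{\nu/\rho}=\sum s_{\lambda'/\mu'}s_{\nu'/\rho'},
\]
where the sum is over $\lambda'\in\YY(\lambda;k,\ell)$, $\mu'\in\YY(\mu;k,\ell)$, $\nu'\in\YY(\nu;k,\ell)$ and $\rho'\in\YY(\rho;k,\ell)$. The product with tildes expands the same way. So it suffices to produce an injection $\Phi$ from quadruples $(\lambda',\mu',\nu',\rho')$ to tilde-quadruples $(\ti\lambda',\ti\mu',\ti\nu',\ti\rho')$ such that each pair of terms satisfies the hypotheses of Theorem~\ref{thm:skew-schur-log}. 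Those hypotheses are
\[
(\lambda',\mu')+(\nu',\rho')=(\ti\lambda',\ti\mu')+(\ti\nu',\ti\rho'),
\]
with $(\ti\lambda',\ti\mu')$ and $(\ti\nu',\ti\rho')$ lying in $P_{(\lambda',\mu'),(\nu',\rho')}$. Granting this, each difference $s_{\ti\lambda'/\ti\mu'}s_{\ti\nu'/\ti\rho'}-s_{\lambda'/\mu'}s_{\nu'/\rho'}$ is Schur positive. The unpaired tilde terms are products of skew Schur functions, hence Schur positive, and summing finishes the proof. If an injection is awkward, I would instead show the multisets of tails match up to a bijection.

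The first $\ell$ coordinates of each vector are fixed, and the remaining coordinates $\ell+1,\dots,k$ are free subject to the partition condition. I would take $\Phi$ to keep the tails:
\[
\ti\lambda'=(\ti\lambda_1,\dots,\ti\lambda_\ell,\lambda'_{\ell+1},\dots,\lambda'_k),
\]
and similarly $\ti\mu'$ takes the tail of $\mu'$, $\ti\nu'$ that of $\nu'$, and $\ti\rho'$ that of $\rho'$. The sum condition then holds coordinatewise: on the first $\ell$ coordinates by hypothesis, and on the tails trivially.

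This choice can fail to give partitions. For example, $\ti\lambda_\ell$ may be smaller than $\lambda_\ell$, so that $\ti\lambda'_\ell<\lambda'_{\ell+1}$. The fix I would use is a \emph{sorting} choice on tails. Given the four tails $t_\lambda,t_\mu,t_\nu,t_\rho$, assign to the tilde side the coordinatewise max/min combinations $\max(t_\lambda,t_\nu),\min(t_\lambda,t_\nu)$ (and likewise for $\mu,\rho$), matched to whichever of $\ti\lambda,\ti\nu$ has larger $\ell$-th part. The membership $\ti\lambda,\ti\nu\in P$ gives $\min(\lambda_\ell,\nu_\ell)\le\ti\lambda_\ell,\ti\nu_\ell\le\max(\lambda_\ell,\nu_\ell)$, which is what the partition check needs. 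Coordinatewise max/min preserves the sum and preserves weakly decreasing order, and the max tail fits under the larger head. The map $(t_\lambda,t_\nu)\mapsto(\max,\min)$ is not injective, however. This is the main obstacle. I would handle it by the standard trick of comparing generating functions of unordered pairs: swapping tails between the $\lambda$ and $\nu$ slots merely permutes the terms summed in the tilde product, so one can argue over orbits.

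Finally, the alcoved condition must be verified for the new $2k$-vectors. The parallelepiped $P_{x,y}$ is cut out by the constraints
\[
\min(x_i-x_j,\,y_i-y_j)\le z_i-z_j\le\max(x_i-x_j,\,y_i-y_j)
\]
for all $i,j$, together with $\min(x_i,y_i)\le z_i\le\max(x_i,y_i)$, and I would check each pair of indices in the following cases.
\begin{itemize}
\item Both indices in heads: this holds by hypothesis.
\item Both indices in tails: the coordinatewise max/min construction keeps differences within range, as in the usual lattice property of L-convex sets.
\item Mixed pairs: these use the head bounds together with the tail sorting.
\end{itemize}
I expect the mixed-pair case, together with handling the non-injective sorting, to be where the real work lies.
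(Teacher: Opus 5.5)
\textbf{The gap: term-by-term pairing is impossible in general.} Your reduction cannot be completed, and the obstruction is not the non-injectivity of the max/min rule. In general, no pairing at all satisfies the hypotheses of Theorem~\ref{thm:skew-schur-log}.

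Take $\ell=1$, $k=2$, $\lambda=(5)$, $\nu=(3)$, $\ti\lambda=\ti\nu=(4)$ and $\mu=\rho=\ti\mu=\ti\rho=\varnothing$. All hypotheses of the statement hold, and $S^{(2,1)}_{(m)}=\sum_{j=0}^m s_{(m,j)}$. Both degree and the sum condition force a term $s_{(5,a)}s_{(3,b)}$ to be paired with some $s_{(4,x)}s_{(4,y)}$ with $x+y=a+b$.
\begin{itemize}
\item In degree $13$, the untilde terms have $(a,b)\in\{(5,0),(4,1),(3,2),(2,3)\}$, and the tilde terms have $x\in\{1,2,3,4\}$.
\item Set $A=(5,a,0,0)$, $B=(3,b,0,0)$ and $\ti A=(4,x,0,0)$. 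The condition $\ti A\in L(A,B)$ requires $4-x$ to lie between $5-a$ and $3-b$ (coordinates $1,2$), and $x$ to lie between $a$ and $b$ (coordinates $2,3$).
\item For $(a,b)=(4,1),(3,2),(2,3)$ this forces $x\in\{2,3\}$. So three untilde terms compete for two tilde terms, and Hall's condition fails.
\end{itemize}
Passing to swap-orbits does not help: here $\ti\lambda=\ti\nu$, so the swap is an honest symmetry and the count still fails. Your specific rule already fails the mixed-pair check in degree $10$. For $(a,b)=(2,0)$, both $(5,2)$ and $(3,0)$ have first difference $3$, which forces $\ti\lambda'=\ti\nu'=(4,1)$, whereas max/min produces $(4,2)$ and $(4,0)$.

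\textbf{The missing idea: sum over tails inside one convex set.} The paper does not expand and then match terms. It realizes the coefficient of $s_\kappa$ in $S^{(k,\ell)}_{\lambda/\mu}S^{(k,\ell)}_{\nu/\rho}$ as the number of skew skeps on $\Delta_{2k}$ with $g^{\nwarrow}=\kappa$ whose boundaries are prescribed only in their first $2\ell$ interleaved entries. It then fixes $g^+$ on the even positions and projects onto just the $2\ell$ head coordinates $g_{0,2i-1},g_{2i-1,0}$ for $i\le\ell$. Theorem~\ref{thm:marginall} gives log-concavity of the fiber count, and each fiber already contains every admissible tail. The octahedron symmetry of Theorem~\ref{thm:skew-hive-skep} and the square-root trick then finish exactly as for Theorem~\ref{thm:skew-schur-log}. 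A marginal of a convex set is log-concave even though, as the example shows, its summands cannot be matched one by one.

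\textbf{On your reading of $P$.} Your reading of $P$ with the coordinatewise bounds $\min(x_i,y_i)\le z_i\le\max(x_i,y_i)$ is the right one. Nonnegativity of the tails becomes a one-sided bound on a single odd coordinate (e.g.\ $g_{0,2k-1}\ge g_{0,2k-2}$), so the relevant set is really $L^\natural$-convex. Under the plain $L$ reading on $\Z^{2\ell}$ the statement fails. For example, take $k=2$, $(\lambda,\mu)=(\nu,\rho)=((2),(1))$, $(\ti\lambda,\ti\mu)=((3),(2))$ and $(\ti\nu,\ti\rho)=((1),(0))$. The coefficient of $s_{(2)}$ is $4$ in the old product and $3$ in the new one.
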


    \begin{remark}
        Chan--Chen--Pak--Soskin also noted that their results also hold for \textit{lower Schur functions}, see \cite[Section 3.4]{chan2026correlation}. It would be interesting to see if our result holds for lower (skew) Schur functions as well.
    \end{remark}

\subsection{Peelable tableaux}

    Peelable tableaux were first used by Remmel--Whitney in \cite{remmel1984multiplying} to give the Littlewood--Richardson coefficients. Then, Nguyen--Nguyen--Woodruff in \cite{nguyen2025shuffle} introduced peelable tableaux for Nguyen--Pylyavskyy's shuffle tableaux \cite{nguyen2025temperley} as another attempt to prove Conjecture \ref{conj:log-concavity}. Peelable tableaux bear resemblance to skeps in the sense that the content of a peelable tableau has an interlacing property similar to the boundary of skeps. Thus, Speyer suspects that there are relations between skeps and shuffle tableaux. We explain this connection in Section \ref{sec:peelable} in a more general settings.

    A secret consequence of \cite[Theorem 1.4]{nguyen2025shuffle} is that besides Remmel--Whitney's and Nguyen--Nguyen--Woodruff's rules, we have more intermediate peelable tableaux rules for the Littlewood--Richardson coefficients. In addition, these rules are all related by Bender--Knuth involutions. On the other hand, besides our new skew hives and skew skeps rules, we have more intermediate skew hives rules. These are related by the octahedron recurrence.

    In Section \ref{sec:peelable}, via Gelfand--Tsetlin patterns, we will explain explicit bijections (Bijection \ref{bij:skew-hive-peelable}) between the peelable tableaux rules and skew hives rules.
    
\section*{Acknowledgement}

    We thank Alex Postnikov and David Speyer for helpful conversations.

\section{Hives, skeps, skew hives, skew skeps}

\subsection{Hives and skeps}

     Hives were invented by Knutson and Tao in \cite{knutson1999honeycomb} to compute Littlewood--Richardson coefficients. They have also been reintepreted as special cases of $M$-convex functions \cite{M03} or polymatroids over the tropical hyperfield $\mathbb{T}_0$ \cite{BHKKL25}. Let $\Delta_{n} = \{(i,j)\in\ZZ^2_{\geq 0}:i+j \leq n\}$.

    \begin{definition}\label{def:hives}
        A \textit{hive} is a function $h:\Delta_{n}\rightarrow\ZZ$ obeying the following inequalities for all $(i,j)$ such that the indices remain within $\Delta_{n}$.
        \begin{align*}
            h_{i,j} + h_{i+1,j} & \geq h_{i,j+1} + h_{i+1,j-1}, \\
            h_{i,j} + h_{i,j+1} & \geq h_{i+1,j} + h_{i-1,j+1}, \\
            h_{i+1,j} + h_{i,j+1} & \geq h_{i,j} + h_{i+1,j+1}.
        \end{align*}
        Given a hive $h$, define
        \begin{align*}
            h^{\uparrow} &= (h_{01}-h_{00}, h_{02}-h_{01},\ldots,h_{0,n} - h_{0,n-1}), \\
            h^{\leftarrow} &= (h_{n-1,0}-h_{n,0}, h_{n-2,0}-h_{n-1,0},\ldots,h_{0,0} - h_{1,0}), \\
            h^{\nwarrow} &= (h_{n-1,1}-h_{n,0}, h_{n-1,2}-h_{n-1,1},\ldots,h_{0,n} - h_{1,n-1}).
        \end{align*}
        Given partitions $\lambda,\mu,\nu$ of length $n$, let $\Hive_{\lambda,\mu}^\nu$ denotes the set of hives with $h^\leftarrow = \lambda$, $h^\uparrow = \mu$ and $h^\nwarrow = \nu$, and $h_{n,0} = 0$.
    \end{definition}

    \begin{thm}[\cite{knutson1999honeycomb, knutson2003positive}]
        Given partitions $\lambda,\mu,\nu$ of length $n$, then
        \[ c_{\lambda,\mu}^\nu = |\Hive_{\lambda,\mu}^\nu|. \]
    \end{thm}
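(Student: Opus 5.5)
The statement is the Knutson--Tao hive theorem. I would prove it by a weight-preserving bijection between $\Hive_{\lambda,\mu}^\nu$ and Littlewood--Richardson tableaux of shape $\nu/\lambda$ and content $\mu$. The count of the latter is $c_{\lambda,\mu}^\nu$ by the classical Littlewood--Richardson rule, which I take as given. I would route the bijection through Gelfand--Tsetlin patterns, since that is the language the paper later uses in Section~\ref{sec:peelable}.

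The first step is to pass from $h$ to its edge differences. For each row $i$ (fixed first coordinate), define the partial sums $\lambda^{(i)}$ by
\[
\lambda^{(i)}_k = h_{i,k} - h_{i+1,k-1} \quad\text{(suitably normalized)},
\]
so that $\lambda^{(n)}=\emptyset$ and, on the boundary, $\lambda^{(0)}$ relates to $\nu$ and the left edge to $\lambda$. Under this substitution the three rhombus inequalities of Definition~\ref{def:hives} translate into linear inequalities among the differences. Two families of rhombi should become the interlacing conditions $\lambda^{(i+1)} \preceq \lambda^{(i)}$, meaning $\lambda^{(i)}$ is obtained from $\lambda^{(i+1)}$ by adding a horizontal strip. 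The third family should become the lattice-word (Yamanouchi) condition. The boundary data $h^\leftarrow=\lambda$, $h^\uparrow=\mu$, $h^\nwarrow=\nu$ fix three things: the chain starts at $\lambda$, ends at $\nu$, and the strip added at step $j$ has size $\mu_j$. Such a chain is exactly a semistandard tableau of shape $\nu/\lambda$ and content $\mu$.

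The normalization $h_{n,0}=0$ kills the additive constant, so $h$ is recovered uniquely from its differences. Conversely, any such tableau produces integer differences satisfying all three inequality families. This gives the bijection.

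The main obstacle is the third family: showing precisely that the third rhombus inequality is equivalent to the reverse reading word being Yamanouchi, with no extra or missing conditions. I would first try to write the number of $j$'s in rows $\le r$ as $h$-differences, so that the inequality reads $\#\{j\text{'s in rows}\le r\} \le \#\{(j-1)\text{'s in rows}\le r-1\}$. This is the standard reformulation of LR tableaux and is equivalent to the lattice condition.

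If the bookkeeping becomes unwieldy, a fallback is the honeycomb route of \cite{knutson1999honeycomb}. There one uses the Legendre-dual correspondence between hives and honeycombs, together with the fact that the hive polytope's integer points count tensor multiplicities, proved via Berenstein--Zelevinsky patterns. I would prefer the direct tableau bijection.
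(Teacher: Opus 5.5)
\textbf{There is a genuine gap: your central construction is wrong.} Your overall route, a Gelfand--Tsetlin bijection with LR tableaux plus the classical LR rule, is the one the paper relies on. The paper only cites Knutson--Tao/KTW for this theorem, but then records Buch's bijection (Bijection~\ref{bij:yama-hive}) between $\LR_{\lambda,\mu}^\nu$ and $\Hive_{\lambda,\mu}^\nu$ via row sums of the Gelfand--Tsetlin pattern.

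The map you write down fails. Set $\lambda^{(i)}_k=h_{i,k}-h_{i+1,k-1}$, i.e.\ edges in direction $(1,-1)$ grouped by first coordinate. Then:
\begin{itemize}
\item the first hive inequality rearranges to $\lambda^{(i)}_k\ge\lambda^{(i)}_{k+1}$, which is monotonicity inside one group;
\item the second rearranges to $\lambda^{(i-1)}_k\le\lambda^{(i)}_k$, a same-index comparison with containment in the opposite direction to what you claim.
\end{itemize}
No rhombus relates $\lambda^{(i)}_k$ to $\lambda^{(i\pm1)}_{k\pm1}$, so you never get interlacing. Also, the family you want for the lattice condition has been used up as a partition condition. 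Concretely, for the first hive of Example~\ref{ex:hives} your groups are $(4,2,2,2),(5,2,2),(5,3),(5)$, which is not a Gelfand--Tsetlin pattern. Separately, a triangular interlacing array ending at $\lambda^{(n)}=\emptyset$ encodes a tableau of straight shape. So it cannot also be ``a chain from $\lambda$ to $\nu$ with strips of sizes $\mu_j$''. Your description is internally inconsistent at exactly the point where the bijection lives.

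\textbf{The missing idea:} the hive records only part of the Gelfand--Tsetlin data of the skew tableau. The chain $\lambda=\lambda^{(0)}\subseteq\cdots\subseteq\lambda^{(n)}=\nu$ consists of $n+1$ partitions with $n$ parts each. The hive sees only the parts $\lambda^{(j)}_r$ with $r>j$. You read them off as horizontal differences along the row of fixed second coordinate $j$:
\[
\lambda^{(j)}_r=h_{n-r,j}-h_{n-r+1,j}, \qquad\text{equivalently}\qquad h_{i,j}=\sum_{r\le n-i}\lambda^{(j)}_r .
\]
The latter is $y_{j,n-i}$ in Bijection~\ref{bij:yama-hive}.

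The remaining parts are frozen: $\lambda^{(j)}_r=\nu_r$ for $r\le j$. This holds because the lattice condition forces row $r$ of an LR tableau to contain only entries $\le r$; these are the red entries discarded in the paper's example.

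With this dictionary:
\begin{itemize}
\item the third hive inequality is containment, $\lambda^{(j)}_r\le\lambda^{(j+1)}_r$;
\item the second is column strictness, $\lambda^{(j+1)}_{r+1}\le\lambda^{(j)}_r$;
\item the first is the lattice condition, $\#\{(j+1)\text{'s in rows}\le r\}\le\#\{j\text{'s in rows}\le r-1\}$.
\end{itemize}

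One interface condition between frozen and recorded parts is not a rhombus inequality inside $\Delta_n$: $\lambda^{(j)}_{j+1}\le\nu_{j+1}$, i.e.\ $h_{n-j-1,j+1}\ge h_{n-j-1,j}$. You have to derive it. Rewrite the first inequality as $h_{i,j+1}-h_{i,j}\le h_{i+1,j}-h_{i+1,j-1}$ and chain it along the hypotenuse direction to the boundary edge $h_{0,n}-h_{0,n-1}=\mu_n\ge 0$. Once these pieces are in place, your plan goes through.
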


    \begin{example}\label{ex:hives}
        Let $\lambda = (2,1,1)$, $\mu = (4,2,1,1)$, and $\nu = (5,3,2,2)$. There are two hives with the given boundaries
        \[ \begin{matrix}
            12 & & & & \\
            11 & 10 & & & \\
            10 & 9 & 8 & & \\
            8 & 8 & 7 & 5 & \\
            4 & 4 & 3 & 2 & 0
        \end{matrix}, \quad\quad\quad
        \begin{matrix}
            12 & & & & \\
            11 & 10 & & & \\
            10 & 9 & 8 & & \\
            8 & 7 & 6 & 5 & \\
            4 & 4 & 3 & 2 & 0
        \end{matrix}. \]
        Thus, $c_{(2,1,1),(4,2,1,1)}^{(5,3,2,2)} = 2$.
    \end{example}

    Let us recall the classical Littlewood--Richardson rule. Given a SSYT $T$, the reading word is the word obtained by reading the entries of $T$ from bottom to top and from left to right in each row. A word is a reverse lattice word if in any suffix, there are no more $i+1$ than $i$ for any $i$. A Yamanouchi tableau is a SSYT whose reading word is a reverse lattice word. Let $\LR_{\lambda,\mu}^\nu$ be the set of Yamanouchi tableaux of shape $\nu/\lambda$ and content $\mu$. Then, there is an easy bijection between $\LR_{\lambda,\mu}^\nu$ and $\Hive_{\lambda,\mu}^\nu$ \cite{buch2000saturation} as follows.

    \begin{bijection}[$\LR_{\lambda,\mu}^\nu \rightarrow \Hive_{\lambda,\mu}^\nu$]\label{bij:yama-hive}
        Given $T \in \LR_{\lambda,\mu}^\nu$,
        \begin{enumerate}
            \item fill the squares in $\lambda$ with $0$,
            \item let $\{x_{i,j}~|~0\leq i\leq n, 1\leq j\leq n\}$ be the Gelfand-Tsetlin pattern, where $(x_{i,1},x_{i,2},\ldots,x_{i,n})$ is the shape of of the subtableaux containing the entries $0,\ldots,i$,
            \item let $\{y_{i,j}~|~0\leq i, j\leq n\}$ be the row sum of $\{x_{i,j}\}$, where $y_{i,0} = 0$ and $y_{i,j} = \sum_{k=1}^j x_{i,k}$,
            \item the subpattern $\{y_{i,j}~|~i\leq j\}$ forms a hive in $\Hive_{\lambda,\mu}^\nu$.
        \end{enumerate}
    \end{bijection}

    \begin{example}
        Continuing Example \ref{ex:hives}, the two Yamanouchi tableaux of shape $(5,3,2,2)/(2,1,1)$ and content $(4,2,1,1)$ are
        \[ 
        \begin{ytableau} 
            \none & \none & 1 & 1 & 1 \\
            \none & 1 & 2 \\
            \none & 3 \\
            2 & 4
            \end{ytableau}, \quad\quad\quad
        \begin{ytableau} 
            \none & \none & 1 & 1 & 1 \\
            \none & 2 & 2 \\
            \none & 3 \\
            1 & 4
            \end{ytableau}.
        \]
        Padding $0$'s, we have
        \[ 
        \begin{ytableau} 
            0 & 0 & 1 & 1 & 1 \\
            0 & 1 & 2 \\
            0 & 3 \\
            2 & 4
            \end{ytableau}, \quad\quad\quad
        \begin{ytableau} 
            0 & 0 & 1 & 1 & 1 \\
            0 & 2 & 2 \\
            0 & 3 \\
            1 & 4
            \end{ytableau}.
        \]
        The Gelfand-Tsetlin patterns are
        \[ \begin{matrix}
            5 & & 3 & & 2 & & 2 \\
            & 5 & & 3 & & 2 & & 1 \\
            & & 5 & & 3 & & 1 & & 1 \\
            & & & 5 & & 2 & & 1 & & 0 \\
            & & & & 2 & & 1 & & 1 & & 0
        \end{matrix}, \quad\quad\quad
        \begin{matrix}
            5 & & 3 & & 2 & & 2 \\
            & 5 & & 3 & & 2 & & 1 \\
            & & 5 & & 3 & & 1 & & 1 \\
            & & & 5 & & 1 & & 1 & & 1 \\
            & & & & 2 & & 1 & & 1 & & 0
        \end{matrix}. \]
        The row sums are
        \[ \begin{matrix}
            \textcolor{red}{0} & & \textcolor{red}{5} & & \textcolor{red}{8} & & \textcolor{red}{10} & & 12 \\
            & \textcolor{red}{0} & & \textcolor{red}{5} & & \textcolor{red}{8} & & 10 & & 11 \\
            & & \textcolor{red}{0} & & \textcolor{red}{5} & & 8 & & 9 & & 10 \\
            & & & \textcolor{red}{0} & & 5 & & 7 & & 8 & & 8 \\
            & & & & 0 & & 2 & & 3 & & 4 & & 4
        \end{matrix}, \quad
        \begin{matrix}
            \textcolor{red}{0} & & \textcolor{red}{5} & & \textcolor{red}{8} & & \textcolor{red}{10} & & 12 \\
            & \textcolor{red}{0} & & \textcolor{red}{5} & & \textcolor{red}{8} & & 10 & & 11 \\
            & & \textcolor{red}{0} & & \textcolor{red}{5} & & 8 & & 9 & & 10 \\
            & & & \textcolor{red}{0} & & 5 & & 6 & & 7 & & 8 \\
            & & & & 0 & & 2 & & 3 & & 4 & & 4
        \end{matrix}. \]
        Removing the red entries and reorienting the grid, we get the two hives in Example \ref{ex:hives}.
    \end{example}

    Skeps were recently introduced by Speyer in \cite{speyer2026log} in the same spirit of hives.

    \begin{definition}\label{def:skeps}
        A \textit{skep} is a function $h:\Delta_{n}\rightarrow\ZZ$ obeying the following inequalities for all $(i,j)$ such that the indices remain within $\Delta_{n}$.
        \begin{align*}
            h_{i,j} + h_{i+1,j} & \geq h_{i,j+1} + h_{i+1,j-1}, \\
            h_{i,j} + h_{i,j+1} & \geq h_{i+1,j} + h_{i-1,j+1}, \\
            h_{i,j} + h_{i+1,j} & \geq h_{i,j-1} + h_{i+1,j+1}, \\
            h_{i,j} + h_{i,j+1} & \geq h_{i-1,j} + h_{i+1,j+1}, \\
            h_{11} & \geq h_{00}.
        \end{align*}
        Let $\Skep_{\lambda,\mu}^\nu$ denotes the set of skeps with $(h^\leftarrow, h^\uparrow) = (\lambda_1,\mu_1,\ldots,\lambda_n,\mu_n)$ and $h^\nwarrow = \nu$.
    \end{definition}

    \begin{thm}[{\cite[Theorem 3.16]{speyer2026log}}]
        Given partitions $\lambda,\mu,\nu$ of length $n$, then
        \[ c_{\lambda,\mu}^\nu = |\Skep_{\lambda,\mu}^\nu|. \]
    \end{thm}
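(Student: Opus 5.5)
The plan is to transport the hive count through the octahedron recurrence. Since the theorem is Speyer's and we already have Knutson--Tao's $c_{\lambda,\mu}^\nu = |\Hive_{\lambda,\mu}^\nu|$, it suffices to build a bijection $\Hive_{\lambda,\mu}^\nu \to \Skep_{\lambda,\mu}^\nu$ that fixes the $\nwarrow$ boundary $\nu$. The bijection should also turn the $\leftarrow$/$\uparrow$ boundary data $(\lambda_1,\ldots,\lambda_n,\mu_1,\ldots,\mu_n)$ into the interlaced data $(\lambda_1,\mu_1,\ldots,\lambda_n,\mu_n)$.

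\textbf{Lifting and the local move.} First lift a hive to a function on a three-dimensional region, namely a stack of triangular slices indexed by a time parameter. Each slice is then updated by the tropical octahedron recurrence
\[ h'_{x} = \max(h_a+h_b,\; h_c+h_d) - h_x, \]
applied at one lattice point of the slice at a time. Each elementary step should correspond to swapping one adjacent pair in the boundary word, taking $\lambda_k$ past $\mu_j$. Composing the $\binom{n}{2}$ adjacent transpositions that take $(\lambda,\mu)$ to the shuffle $(\lambda_1,\mu_1,\ldots)$ then gives a chain of intermediate models. The interpolating "skew hive" rules mentioned in the introduction are presumably exactly these intermediate models.

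\textbf{Checking each step.} For each elementary step I would verify three things.
\begin{enumerate}
\item The move is an involution; this is immediate from the tropical formula, so every step is a bijection.
\item It maps the rhombus inequalities of the source model onto those of the target model. The hive inequalities are the three rhombus families, while the skep inequalities are two rhombus families, two "flipped" families, and $h_{11}\ge h_{00}$.
\item It leaves the untouched boundary entries, in particular the $\nwarrow$ edge, unchanged.
\end{enumerate}

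\textbf{Main obstacle.} The hard part is the inequality bookkeeping in the second check. One has to show that after an octahedron move the new local configuration satisfies the target inequalities, and that the condition $h_{11}\ge h_{00}$ appears exactly at the final step. The standard route is the Knutson--Tao--Woodward/Henriques--Kamnitzer argument: octahedron moves preserve the hive cone in the sense that the tropicalized cluster exchange relations respect the positivity of rhombus inequalities. I would first verify this locally on a single octahedron, where it reduces to a case check on which term attains the max.

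\textbf{Fallback.} If the local argument proves unwieldy, I would argue through tableaux instead. Realize the interlaced boundary via the Gelfand--Tsetlin row-sum construction of Bijection~\ref{bij:yama-hive}, applied to tableaux whose content order interleaves the $\lambda$ and $\mu$ alphabets. Then use Bender--Knuth involutions, which tropicalize to the same octahedron moves, to transport the Yamanouchi count. This gives the bijection with $\LR_{\lambda,\mu}^\nu$ directly.
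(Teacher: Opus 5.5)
\textbf{A genuine gap.} Your architecture is right: composing $c^\nu_{\lambda,\mu}=|\Hive^\nu_{\lambda,\mu}|$ with the octahedron-recurrence bijection of Theorem~\ref{thm:speyer-hive-skep} is how this count is obtained. Your boundary bookkeeping is also fine: the linear moves on the faces $i=0$, $j=0$ and at the corner shuffle the boundary word, and the $\nwarrow$ edge is forced to $t=0$ and never moves. The gap is the step you call the main obstacle. It is essentially the whole content of the theorem, and the method you propose for it fails.

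\textbf{The intermediate models are wrong.} The rhombus inequalities lying on an intermediate section do not define a correct model, so there is no step-by-step matching of inequality systems. Take $n=3$. The passage from $S^{\bbot}_{\hive}$ to $S^{\bbot}_{\skep}$ consists of just three boundary moves: at $(0,0)$, then at $(1,0)$ and $(0,1)$. The interior value $h_{11}$ never changes.
\begin{itemize}
\item The first move replaces $h_{00}$ by $\tilde h(0,0,-1)=h_{10}+h_{01}-h_{00}$. The new corner triangle is coplanar with none of its neighbours.
\item So the hive inequality $h_{10}+h_{01}\ge h_{00}+h_{11}$ is a rhombus inequality of no later section. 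It survives only as the edge condition $\tilde h(0,0,-1)\ge \tilde h(1,1,-1)$.
\item That edge condition is the skep's corner condition. It is carried from the first move, not created at the last one as you expect.
\end{itemize}
It cannot be dropped. For $\lambda=(2,0,0)$, $\mu=(1,1,0)$, $\nu=(3,1,0)$, every other hive inequality allows $h_{11}\in\{3,4\}$, and so does every skep rhombus inequality. Yet $c^\nu_{\lambda,\mu}=1$. In skep coordinates the condition reads $h_{00}\ge h_{11}$. So the inequality printed in Definition~\ref{def:skeps} (and copied in your proposal) is reversed; as printed it admits both values. A plan that posits the condition instead of deriving it cannot notice this.

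\textbf{The check is not local.} Even the forward direction is not a case check on the six vertices of one octahedron. Write $e_1,e_2,e_3$ for the coordinate directions and perform the move $w-e_3\mapsto w+e_3$. The top face $\{w+e_1,w+e_2,w+e_3\}$ can form a rhombus of the new section with the outside triangle $\{w+e_1,w+e_2,w+e_1+e_2-e_3\}$. After substituting the recurrence, its inequality is equivalent to
\[ h(w+e_2)+h(w-e_3)\ge h(w-e_1)+h(w+e_1+e_2-e_3) \]
together with its $e_1\leftrightarrow e_2$ mirror image. These are rhombi lying on no section that contains the lower half of the octahedron, so no local computation can supply them.

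\textbf{What is actually needed.} The invariant that propagates concerns the whole array on $T_n$: it satisfies the rhombus inequalities everywhere, and the corner ones are edge conditions like the one above. It is not a list of inequalities on the current section. The direction you need for surjectivity, that every skep comes from a hive, is the hard one: you must show the skep inequalities alone force that global statement. Speyer does this with the Henriques--Kamnitzer wavefront lemma.
\begin{itemize}
\item Inequalities along $\WW\cap\SSS$ transfer between any two sections transverse to $\WW$.
\item Every rhombus lies on a section transverse to some wavefront.
\item The hive section serves for every wavefront, but the skep side needs two auxiliary sections.
\end{itemize}
This is the scheme of Lemmas~\ref{lm:tranverse} and~\ref{lem:any-rhombus-int} and of the proof of Theorem~\ref{thm:skew-hive-skep}.

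\textbf{The fallback only relocates the difficulty.} Bender--Knuth involutions do not preserve the Yamanouchi condition; describing their images is exactly the Nguyen--Nguyen--Woodruff peelable theorem. Moreover, the row-sum array of a tableau in $2n$ interleaved letters has $2n+1$ rows. It is therefore not a function on the $\Delta_n$ carrying a skep without an identification you would still have to supply.
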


    \begin{example}\label{ex:skeps}
        Let $\lambda = (2,1,1)$, $\mu = (4,2,1,1)$, and $\nu = (5,3,2,2)$ as in Example \ref{ex:hives}. There are two skeps with the given boundaries
        \[ \begin{matrix}
            12 & & & & \\
            11 & 10 & & & \\
            11 & 9 & 8 & & \\
            10 & 9 & 6 & 5 & \\
            9 & 7 & 6 & 2 & 0
        \end{matrix}, \quad\quad\quad
        \begin{matrix}
            12 & & & & \\
            11 & 10 & & & \\
            11 & 9 & 8 & & \\
            10 & 9 & 7 & 5 & \\
            9 & 7 & 6 & 2 & 0
        \end{matrix}, \]
        confirming $c_{(2,1,1),(4,2,1,1)}^{(5,3,2,2)} = 2$.
    \end{example}

    The sets $\Hive_{\lambda,\mu}^\nu,\Skep_{\lambda,\mu}^\nu,\Skep_{\mu,\lambda}^\nu,\Hive_{\mu,\lambda}^\nu$ are all in bijection with each other through the \textit{octahedron recurrence}. We follow the presentation in \cite{speyer2026log} here; see also \cite{henriques2006octahedron}.

    \begin{definition}
        Let $T_n = \{(i,j,t)~|~ 0 \leq i,j,|t|\leq n - i - j, t\equiv n+i+j~\text{mod}~2 \}$. A function $\Tilde{h}:T_n\rightarrow\ZZ$ is said to obey the \textit{octahedron recurrence} if, for all $(i,j,t+1)\in T_n$ such that $(i,j,t-1)\in T_n$, we have
        \begin{equation*}
            \Tilde{h}(i,j,t+1) + \Tilde{h}(i,j,t-1) = \begin{cases}
                \max\left(
                    \Tilde{h}(i-1,j,t) + \Tilde{h}(i+1,j,t),\atop\Tilde{h}(i,j-1,t) + \Tilde{h}(i,j+1,t)
                \right) &i,j\geq 1 \\
                \Tilde{h}(i-1,0,t) + \Tilde{h}(i+1,0,t) &i\geq 1, j = 0 \\
                \Tilde{h}(0,j-1,t) + \Tilde{h}(0,j+1,t) &i = 0, j\geq 1 \\
                \Tilde{h}(1,0,t) + \Tilde{h}(0,1,t) &i = j = 0
            \end{cases}
        \end{equation*}
        
        Let $\epsilon(i,j)$ be $0$ if $i+j \equiv n~\text{mod}~2$ and $1$ otherwise. We define the following subsets of $T_n$
        \begin{align*}
            S^{\ttop}_{\hive} &= \{(i,j,n-i-j):(i,j)\in\Delta_n\},\\
            S^{\bbot}_{\hive} &= \{(i,j,-n+i+j):(i,j)\in\Delta_n\},\\
            S^{\ttop}_{\skep} &= \{(i,j,\epsilon(i,j)):(i,j)\in\Delta_n\},\\
            S^{\bbot}_{\skep} &= \{(i,j,-\epsilon(i,j)):(i,j)\in\Delta_n\}.
        \end{align*}
    \end{definition}

    \begin{thm}[{\cite[Theorem 3.15]{speyer2026log}}]\label{thm:speyer-hive-skep}
        Let $\Tilde{h}:T_n\rightarrow\ZZ$ obeying the octahedron recurrence, then the following are equivalent
        \begin{enumerate}
            \item $\Tilde{h}|_{S^{\ttop}_{\hive}} \in \Hive_{\mu,\lambda}^\nu$,\\
            \item $\Tilde{h}|_{S^{\ttop}_{\skep}} \in \Skep_{\mu,\lambda}^\nu$,\\
            \item $\Tilde{h}|_{S^{\bbot}_{\skep}} \in \Skep_{\lambda,\mu}^\nu$,\\
            \item $\Tilde{h}|_{S^{\bbot}_{\hive}} \in \Hive_{\lambda,\mu}^\nu$.
        \end{enumerate}
    \end{thm}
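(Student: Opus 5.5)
The strategy is to follow Speyer's approach: identify the octahedron recurrence on $T_n$ with a sequence of local ``mutations'' that move a stepped surface from $S^{\bbot}_{\hive}$ up to $S^{\ttop}_{\hive}$. We then verify that the relevant inequalities are preserved when crossing each layer. Since $\Tilde{h}$ is determined by its values on any such surface, each restriction map is a bijection onto its image. It therefore suffices to prove the chain of implications (4)$\Rightarrow$(3)$\Rightarrow$(2)$\Rightarrow$(1) and its reverse. By the symmetry $t\mapsto -t$, which exchanges top with bottom and swaps the roles of $\lambda$ and $\mu$, it is enough to treat (4)$\Rightarrow$(3) and (3)$\Rightarrow$(2); the reverse directions then follow by the same symmetry.

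\textbf{Intermediate surfaces.} For each $k$ between $\epsilon$ and $n$, let $S_k$ be the surface with height $t=-\min(k,\,n-i-j)$, with parity adjusted by $\epsilon(i,j)$. Thus $S_n=S^{\bbot}_{\hive}$ and $S_{0}$ (after parity correction) is $S^{\bbot}_{\skep}$. Passing from $S_{k}$ to $S_{k-1}$ applies the recurrence at the points $(i,j)$ with $i+j\le n-k$. I would define a family of ``intermediate'' inequality systems $\mathcal{C}_k$ that interpolate between the hive inequalities and the skep inequalities:
\begin{itemize}
\item in the region $i+j>n-k$ the function is hive-like;
\item in the region $i+j<n-k$ it is skep-like;
\item along the seam there are mixed rhombus inequalities.
\end{itemize}

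\textbf{Main lemma.} If $\Tilde h|_{S_k}$ satisfies $\mathcal{C}_k$, then $\Tilde h|_{S_{k-1}}$ satisfies $\mathcal{C}_{k-1}$. The key tool is the tropical identity $a+b=\max(c+d,e+f)$, which converts each new inequality into an inequality among old values. This is the standard ``octahedron preserves rhombus inequalities'' argument of Knutson--Tao--Woodward and Henriques--Kamnitzer. Concretely:
\begin{itemize}
\item Each new value $x'=\max(c+d,e+f)-x$ is substituted into a rhombus inequality on the new surface.
\item Distributing the max turns that inequality into two inequalities.
\item Each of these is a sum of two old rhombus inequalities, or follows from one old inequality together with a tautology.
\end{itemize}
The boundary cases $i=0$ or $j=0$ (where the recurrence has no max) and the apex $i=j=0$ need separate checks. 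At the apex, the condition $h_{11}\ge h_{00}$ should arise from the inequality $\Tilde h(1,0,t)+\Tilde h(0,1,t)\ge\ldots$ at the tip.

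\textbf{Boundary data.} The boundary values are tracked directly. On the edges $i=0$ and $j=0$ the recurrence is linear, equal to $\Tilde h(i\pm1,0,t)$, so boundary differences are simply shuffled. Walking the left and lower edges, $(\lambda,\mu)$ therefore becomes interleaved into $(\lambda_1,\mu_1,\dots)$, as in the definition of $\Skep$. Likewise $h^\nwarrow=\nu$ is unchanged because the hypotenuse $i+j=n$ is never mutated. For the step (3)$\Rightarrow$(2), the surfaces $S^{\bbot}_{\skep}$ and $S^{\ttop}_{\skep}$ differ by a single layer of mutations at $t=0$. Here the max-distribution argument shows that the four skep rhombus families and $h_{11}\ge h_{00}$ are preserved, while the boundary interleaving changes from $(\lambda,\mu)$ to $(\mu,\lambda)$.

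\textbf{Main obstacle.} I expect the hardest part to be choosing the correct seam inequalities in $\mathcal{C}_k$ so that the induction closes. The whole argument hinges on the main lemma holding at every seam, so I would first work out the case $n=2,3$ by hand to guess these seam conditions, and then verify the general lemma.
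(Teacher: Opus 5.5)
\textbf{Gap 1: the symmetry reduction misses the skep-to-hive direction.} The map $t\mapsto -t$ sends condition $(k)$ for $(\lambda,\mu)$ to condition $(5-k)$ for $(\mu,\lambda)$. So from (4)$\Rightarrow$(3) and (3)$\Rightarrow$(2) you only obtain (1)$\Rightarrow$(2) and (2)$\Rightarrow$(3). The implications (3)$\Rightarrow$(4) and (2)$\Rightarrow$(1), i.e.\ skep $\Rightarrow$ hive, are never established. No implication ever lands in (1) or (4), so the equivalence does not close.

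Your main lemma cannot supply them, because it only goes from $\mathcal{C}_k$ to $\mathcal{C}_{k-1}$. Going back up from the skep surface needs the converse, starting from a system that lacks the third hive family $h_{i+1,j}+h_{i,j+1}\ge h_{i,j}+h_{i+1,j+1}$. Those four points are not coplanar on the skep surface, which folds along every diagonal. In that direction the apex inequality is also a hypothesis that must be used, not a consequence.

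This is exactly the direction that needs extra input in the argument the paper follows (Speyer's, adapted in the proof of Theorem~\ref{thm:skew-hive-skep}). One adds the condition that $\Tilde h$ satisfies \emph{all} rhombus inequalities, which trivially implies (1)--(4). One then shows each of (1)--(4) implies it via the Henriques--Kamnitzer propagation lemma (the $T_n$ version of Lemma~\ref{lem:any-rhombus-int}). For the hive surface a single section is transverse to every wavefront, and its rhombi along each wavefront are hive rhombi. For the skep no single section works. One uses two sections with the same lattice points but different triangulations, the analogues of $\SSS_1$ and $\SSS_2$.

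\textbf{Gap 2: the main lemma is a plan, not a proof.} Even the direction you do treat rests entirely on it. The seam conditions are unspecified, and the claim that after distributing the max every new inequality is a sum of two old ones is unjustified. When the mutated value lies on the long diagonal of a new rhombus, you must dominate \emph{both} branches of the max. The old inequalities this requires need not be rhombus inequalities of the current surface.

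The propagation result the paper uses does not preserve ``all inequalities on the surface'' wholesale. It fixes a wavefront and transports only the rhombus inequalities along its intersection with sections transverse to it. Transversality, including avoiding the cut point, is a real hypothesis, and it is why two sections are needed for skeps. That wavefront organization is the missing idea.

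Your boundary bookkeeping is fine and matches the paper: the linear edge recurrence interleaves $(\lambda,\mu)$, and the hypotenuse is never mutated.

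One further remark on the apex. A single mutation at $(0,0)$ turns the hive inequality $h_{10}+h_{01}\ge h_{00}+h_{11}$ into $h_{00}\ge h_{11}$, not $h_{11}\ge h_{00}$. For example, take $n=2$, $\lambda=\mu=(1,0)$, $\nu=(1,1)$: the hive exists, but the resulting skep has $h_{11}=1<2=h_{00}$. So the inequality as printed in Definition~\ref{def:skeps} appears to have its direction reversed.
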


    \begin{example}
        Consider the following function on $T_4$ obeying the octahedron recurrence.
        
        \begin{center}
        \begin{tabular}{*{10}{c}}
             & \raisebox{\height}{\scalebox{0.75}{$\left[\begin{matrix}
            \textcolor{red}{4}
        \end{matrix}\right]$}} & \raisebox{\height}{\scalebox{0.75}{$\left[\begin{matrix}
            \textcolor{red}{8} \\
            & \textcolor{red}{4}
        \end{matrix}\right]$}} & \raisebox{\height}{\scalebox{0.75}{$\left[\begin{matrix}
            \textcolor{red}{10} \\
             & \textcolor{red}{8} \\
            8 &  & \textcolor{red}{3}
        \end{matrix}\right]$}} & \raisebox{\height}{\scalebox{0.75}{$\left[\begin{matrix}
            \textcolor{orange}{11} \\
             & \textcolor{orange}{9} & \\
            \textcolor{blue}{10} &  & \textcolor{orange}{7} \\
             & \textcolor{blue}{7} &  & \textcolor{orange}{2}
        \end{matrix}\right]$}} & \raisebox{\height}{\scalebox{0.75}{$\left[\begin{matrix}
            \textcolor{orange}{12} & & & & \\
             & \textcolor{orange}{10} & & & \\
            \textcolor{blue}{11} &  & \textcolor{orange}{8} & & \\
             & \textcolor{blue}{9} &  & \textcolor{orange}{5} & \\
            \textcolor{blue}{9} &  & \textcolor{blue}{6} &  & \textcolor{orange}{0}
        \end{matrix}\right]$}} & \raisebox{\height}{\scalebox{0.75}{$\left[\begin{matrix}
            12 \\
             & 10 & \\
            10 &  & 7 \\
             & 8 &  & 4
        \end{matrix}\right]$}} & \raisebox{\height}{\scalebox{0.75}{$\left[\begin{matrix}
            11 \\
             & 9 \\
            9 &  & 6
        \end{matrix}\right]$}} & \raisebox{\height}{\scalebox{0.75}{$\left[\begin{matrix}
            10 \\
            & 7
        \end{matrix}\right]$}} & \raisebox{\height}{\scalebox{0.75}{$\left[\begin{matrix}
            8
        \end{matrix}\right]$}} \\
            $t$ & -4 & -3 & -2 & -1 & 0 & 1 & 2 & 3 & 4
        \end{tabular}
        \end{center}
        
        The orange and red entries form $S^{\bbot}_{\hive}$, which is the first hive in Example \ref{ex:hives}. The orange and blue entries form $S^{\bbot}_{\skep}$, which is the first skep in Example \ref{ex:skeps}.
    \end{example}

\subsection{Skew hives}

    \begin{definition}\label{def:skew-hive}
        A \textit{skew hive} is a function $h:\Delta_{2n}\rightarrow\ZZ$ obeying the following inequalities for all $(i,j)$ such that the indices remain within $\Delta_{2n}$.
        \begin{align*}
            h_{i,j} + h_{i+1,j} & \geq h_{i,j+1} + h_{i+1,j-1}, \\
            h_{i,j} + h_{i,j+1} & \geq h_{i+1,j} + h_{i-1,j+1}, \\
            h_{i+1,j} + h_{i,j+1} & \geq h_{i,j} + h_{i+1,j+1} \quad\quad\quad\text{for $i + j \neq n-1$}.
        \end{align*}
        Given a skew hive $h$, define
        \begin{align*}
            h^{\uparrow} &= (h_{01}-h_{00}, h_{02}-h_{01},\ldots,h_{0,2n} - h_{0,2n-1}), \\
            h^{\rightarrow} &= (h_{10}-h_{00}, h_{20}-h_{10},\ldots,h_{2n,0} - h_{2n-1,0}), \\
            h^{\nwarrow} &= (h_{2n-1,1}-h_{2n,0}, h_{2n-1,2}-h_{2n-1,1},\ldots,h_{0,2n} - h_{1,2n-1}).
        \end{align*}
        Let $\SkewHive_{\lambda/\mu,\nu/\rho}^\kappa$ be the set of skew hives with $h_{00} = 0$, $h^\uparrow = (\lambda,\nu)$, $h^\rightarrow = (\mu, \rho)$, and $h^\nwarrow = \kappa$.
    \end{definition}

    \begin{thm}\label{thm:skew_hive}
        Given partitions $\lambda,\mu,\nu,\rho$ of length $n$, and $\kappa$ of length $2n$, then
        \[ c_{\lambda/\mu, \nu/\rho}^\kappa = |\SkewHive_{\lambda/\mu,\nu/\rho}^\kappa|. \]
    \end{thm}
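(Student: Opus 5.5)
The plan is to reduce the count to an ordinary hive count for a larger triple of partitions. The starting point is the classical identity that a product of skew Schur functions is a single skew Schur function of a disconnected shape. Fix a large integer $N$ and set
\[ \theta = (\lambda_1+N,\ldots,\lambda_n+N,\nu_1,\ldots,\nu_n), \qquad \varphi = (\mu_1+N,\ldots,\mu_n+N,\rho_1,\ldots,\rho_n). \]
For $N \geq \nu_1$ these are partitions of length $2n$. Once $N$ is large, the skew diagram $\theta/\varphi$ is the disjoint union of a translate of $\lambda/\mu$ (northeast) and of $\nu/\rho$ (southwest), and these two pieces share no row or column. Hence $s_{\theta/\varphi} = s_{\lambda/\mu}s_{\nu/\rho}$, and therefore
\[ c^\kappa_{\lambda/\mu,\nu/\rho} = \langle s_{\theta/\varphi}, s_\kappa\rangle = c^\theta_{\varphi,\kappa} = |\Hive^\theta_{\varphi,\kappa}| \]
by the Knutson--Tao theorem on $\Delta_{2n}$. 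It remains to biject $\Hive^\theta_{\varphi,\kappa}$ with $\SkewHive^\kappa_{\lambda/\mu,\nu/\rho}$.

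For this, define $g:\Delta_{2n}\to\ZZ$ by $g(i,j) = N\min(i+j,n)$. The function $g$ depends only on the level $s=i+j$, and it is concave in $s$, with a kink only at $s=n$. Check $g$ against each type of rhombus inequality.
\begin{enumerate}
\item In the first two types, both sides involve one point at level $s$ and one at level $s+1$, so $g$ contributes equally to both sides.
\item In the third type, the left side has two points at level $s+1$ and the right side has points at levels $s$ and $s+2$. Here $g$ contributes $2g(s+1) - g(s) - g(s+2) \ge 0$, and this is strictly positive (equal to $N$) exactly when $s+1 = n$, i.e.\ $i+j = n-1$.
\end{enumerate}
So $h \mapsto h + g$ carries functions satisfying the skew hive inequalities to functions satisfying every hive inequality except possibly the third type on the line $i+j=n-1$. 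On that line it turns the skew hive's unconstrained third-type quantity into the same quantity plus $N$.

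On the boundary, $g$ increases by $N$ per step along the first $n$ steps of the two edges through $(0,0)$, then stays constant, and it is constant along the edge $i+j=2n$. Hence adding $g$ shifts $(\lambda,\nu)\mapsto \theta$ and $(\mu,\rho)\mapsto\varphi$ while keeping $\kappa$ fixed. After a relabeling/reflection of $\Delta_{2n}$, this matches the conventions of Definition~\ref{def:hives}: one checks that the normalization $h_{00}=0$ corresponds to $h_{n,0}=0$ up to a constant, and that the side with partition $\kappa$ plays the role of $h^\leftarrow$ or $h^\uparrow$, consistent with $c^\theta_{\varphi,\kappa}$. I would carry out this orientation bookkeeping carefully but expect it to be routine.

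The main obstacle is the inequality on the line $i+j=n-1$. Subtracting $g$ clearly sends hives to skew hives, since the skew hive imposes no condition there. The converse requires that every skew hive with the given boundary satisfies
\[ h_{i+1,j}+h_{i,j+1}-h_{i,j}-h_{i+1,j+1} \ge -N \quad\text{on } i+j=n-1. \]
So I need a uniform bound, depending only on $\lambda,\mu,\nu,\rho,\kappa$, on all entries of a skew hive. I would derive it from the first two rhombus inequalities, which hold everywhere. They say that the increments along each of two lattice directions are monotone in the transverse direction. Propagating from the three boundary edges then sandwiches every entry between quantities determined by the boundary, giving $|h_{ij}| \le C(\lambda,\mu,\nu,\rho,\kappa)$. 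Choosing $N > 4C$ (and $N\ge \nu_1,\rho_1$) makes the missing inequality automatic. With that, $h\mapsto h+g$ is a bijection $\SkewHive^\kappa_{\lambda/\mu,\nu/\rho}\to\Hive^\theta_{\varphi,\kappa}$, which proves the theorem.
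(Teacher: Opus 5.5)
Your proposal is correct, and its skeleton is the paper's. The paper's Bijection~\ref{bij:hive-skewhive} is exactly your map $h\mapsto h+g$ with $N=\nu_1$, so that $\theta/\varphi=(\lambda\ast_{\nu_1}\nu)/(\mu\ast_{\nu_1}\rho)$, composed with the reflection $(i,j)\mapsto(2n-i-j,j)$. That reflection makes your orientation bookkeeping exact, including $h_{00}=0\leftrightarrow h_{2n,0}=0$, since $g(0,0)=0$.

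The two arguments differ only at the crux, the third-type inequality on $i+j=n-1$.

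\textbf{The paper's route.} It proves the sharp statement that every skew hive satisfies $h_{i+1,j}+h_{i,j+1}-h_{i,j}-h_{i+1,j+1}\ge-\nu_1$ on that line. Chaining second-type inequalities gives $h_{i+1,j}-h_{i,j}\ge h_{n,0}-h_{n-1,0}=\mu_n\ge 0$. The same chaining gives $h_{i+1,j+1}-h_{i,j+1}\le h_{1,n}-h_{0,n}\le\nu_1$, where the last step holds because the region $j\ge n$ is an honest hive.

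\textbf{Your route.} You let $N$ be large and use an a priori bound. This is legitimate and non-circular, because the set of skew hives with the given boundary does not depend on $N$. Your sketch of the bound is right and easy to make precise:
\begin{itemize}
\item first-type inequalities give $h_{i,b+1}-h_{i,b}\ge(\lambda,\nu)_{i+b+1}$;
\item second-type inequalities give $h_{a+1,j}-h_{a,j}\ge(\mu,\rho)_{a+j+1}$.
\end{itemize}
Summing upward from the edge $j=0$, and rightward to the edge $i+j=2n$, traps $h_{i,j}$ between numbers determined by the boundary.

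\textbf{Comparison.} Your version isolates the structural reason the theorem holds. The two everywhere-valid families of inequalities already cut out a bounded set, so a sufficiently deep kink in $g$ absorbs the missing condition, and no shape-specific estimate is needed. The paper's sharp choice $N=\nu_1$ instead yields a canonical bijection with $\Hive^{\lambda\ast_{\nu_1}\nu}_{\mu\ast_{\nu_1}\rho,\kappa}$. The paper reuses that bijection in Section~\ref{sec:peelable}, via the Remmel--Whitney correspondence with Yamanouchi tableaux of shape $(\lambda\ast_{\nu_1}\nu)/(\mu\ast_{\nu_1}\rho)$.
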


    \begin{example}\label{ex:skew-hives}
        For $\lambda = (2,2), \mu = (1,0), \nu = (4,3), \rho = (2,0)$, and $\kappa = (4,3,1)$, we have $c_{\lambda/\mu, \nu/\rho}^\kappa = 3$. The three skew hives with the given boundaries are
        \[ \begin{matrix}
            11 & & & & \\
            8 & 11 & & & \\
            4 & 7 & 10 & & \\
            2 & 3 & 6 & 7 & \\
            0 & 1 & 1 & 3 & 3
        \end{matrix}, \quad\quad\quad
        \begin{matrix}
            11 & & & & \\
            8 & 11 & & & \\
            4 & 8 & 10 & & \\
            2 & 3 & 6 & 7 & \\
            0 & 1 & 1 & 3 & 3
        \end{matrix}, \quad\quad\quad
        \begin{matrix}
            11 & & & & \\
            8 & 11 & & & \\
            4 & 8 & 10 & & \\
            2 & 3 & 7 & 7 & \\
            0 & 1 & 1 & 3 & 3
        \end{matrix}. \]
    \end{example}

    Denote $\lambda\ast_k \nu = (\lambda_1+k,\ldots,\lambda_n + k, \nu_1,\ldots,\nu_n)$, and consider the shape $(\lambda\ast_{\nu_1}\nu)/(\mu\ast_{\nu_1}\rho)$. Note that $s_{(\lambda\ast_{\nu_1}\nu)/(\mu\ast_{\nu_1}\rho)} = s_{\lambda/\mu}s_{\nu/\rho}$, so $c_{\lambda/\mu, \nu/\rho}^\kappa = |\Hive_{\mu\ast_{\nu_1}\rho,\kappa}^{\lambda\ast_{\nu_1}\nu}|$. Thus, to prove Theorem \ref{thm:skew_hive}, we need a bijection between $\Hive_{\mu\ast_{\nu_1}\rho,\kappa}^{\lambda\ast_{\nu_1}\nu}$ and $\SkewHive_{\lambda/\mu,\nu/\rho}^\kappa$.

    \begin{bijection}\label{bij:hive-skewhive}[$\Hive_{\mu\ast_{\nu_1}\rho,\kappa}^{\lambda\ast_{\nu_1}\nu} \rightarrow \SkewHive_{\lambda/\mu,\nu/\rho}^\kappa$]
        Given $h\in \Hive_{\mu\ast_{\nu_1}\rho,\kappa}^{\lambda\ast_{\nu_1}\nu}$, then define $\Tilde{h}:\Delta_{2n}\rightarrow\ZZ$ by
        \[ \Tilde{h}(i,j) = \begin{cases}
            h(2n-i-j,j) - (i+j)\nu_1 &\text{ if $0\leq i+j\leq n-1$}\\
            h(2n-i-j,j) - n\nu_1 &\text{ if $n\leq i+j\leq 2n$}.
        \end{cases} \]
        In other words, $\Tilde{h}$ is obtained from $h$ by first reversing the rows and then subtracting multiples of $\nu_1$ from each diagonal. Then $\Tilde{h}\in \SkewHive_{\lambda/\mu,\nu/\rho}^\kappa$.
    \end{bijection}

    \begin{example}
        For $\lambda,\mu.\nu,\rho$, and $\kappa$ as in Example \ref{ex:skew-hives}, the three hives in $\Hive_{\mu\ast_{\nu_1}\rho,\kappa}^{\lambda\ast_{\nu_1}\nu}$ are
        \[ 
        \begin{matrix}
            19 & & & & \\
            19 & 16 & & & \\
            18 & 15 & 12 & & \\
            15 & 14 & 11 & 6 & \\
            11 & 11 & 9 & 5 & 0
        \end{matrix}, \quad\quad\quad
        \begin{matrix}
            19 & & & & \\
            19 & 16 & & & \\
            18 & 16 & 12 & & \\
            15 & 14 & 11 & 6 & \\
            11 & 11 & 9 & 5 & 0
        \end{matrix}, \quad\quad\quad\begin{matrix}
            19 & & & & \\
            19 & 16 & & & \\
            18 & 16 & 12 & & \\
            15 & 15 & 11 & 6 & \\
            11 & 11 & 9 & 5 & 0
        \end{matrix}. \]
        Reversing each row, we have
        \[ 
        \begin{matrix}
            19 & & & & \\
            16 & 19 & & & \\
            12 & 15 & 18 & & \\
            6 & 11 & 14 & 15 & \\
            0 & 5 & 9 & 11 & 11
        \end{matrix}, \quad\quad\quad
        \begin{matrix}
            19 & & & & \\
            16 & 19 & & & \\
            12 & 16 & 18 & & \\
            6 & 11 & 14 & 15 & \\
            0 & 5 & 9 & 11 & 11
        \end{matrix}, \quad\quad\quad\begin{matrix}
            19 & & & & \\
            16 & 19 & & & \\
            12 & 16 & 18 & & \\
            6 & 11 & 15 & 15 & \\
            0 & 5 & 9 & 11 & 11
        \end{matrix}. \]
        Since $\nu_1 = 4$ and $n = 2$, we subtract $4$ from the diagonal $i+j = 1$, and $8$ from the diagonals $i+j = 2,3,4$. Then, we have
        \[ \begin{matrix}
            11 & & & & \\
            8 & 11 & & & \\
            4 & 7 & 10 & & \\
            2 & 3 & 6 & 7 & \\
            0 & 1 & 1 & 3 & 3
        \end{matrix}, \quad\quad\quad
        \begin{matrix}
            11 & & & & \\
            8 & 11 & & & \\
            4 & 8 & 10 & & \\
            2 & 3 & 6 & 7 & \\
            0 & 1 & 1 & 3 & 3
        \end{matrix}, \quad\quad\quad
        \begin{matrix}
            11 & & & & \\
            8 & 11 & & & \\
            4 & 8 & 10 & & \\
            2 & 3 & 7 & 7 & \\
            0 & 1 & 1 & 3 & 3
        \end{matrix}. \]
        These are exactly the three skew hives in Example \ref{ex:skew-hives}.
    \end{example}

    \begin{proof}[Proof of Theorem \ref{thm:skew_hive}]
        It suffices to prove that Bijection \ref{bij:hive-skewhive} is a bijection. First, we verify that $\Tilde{h}$ has the correct boundary. Let us verify for $\Tilde{h}^\nwarrow$; the other boundaries are similar.
        \begin{align*}
            \Tilde{h}^\nwarrow &= (\Tilde{h}_{2n-1,1} - \Tilde{h}_{2n,0},\ldots,\Tilde{h}_{0,2n} - \Tilde{h}_{1,2n-1}) \\
            &= ((h_{01}-n\nu)-(h_{00}-n\nu),\ldots, (h_{0,n}-n\nu) - (h_{0,n-1}-n\nu) ) \\
            &= h^\uparrow = \kappa.
        \end{align*}
        Now we check the inequalities. We have
        \begin{align*}
            &~\Tilde{h}_{i,j} + \Tilde{h}_{i+1,j} \geq \Tilde{h}_{i,j+1} + \Tilde{h}_{i+1,j-1}\\ \Longleftrightarrow& ~(h_{2n-i-j,j} - k_1\nu_1) + (h_{2n-i-j-1,j} - k_2\nu_1) \geq (h_{2n-i-j-1,j+1} - k_1\nu_1) + (h_{2n-i-j,j-1} -k_2\nu_1)\\ \Longleftrightarrow& ~h_{2n-i-j,j} + h_{2n-i-j-1,j}\geq h_{2n-i-j-1,j+1} + h_{2n-i-j,j-1},
        \end{align*}
        where $k_1$ and $k_2$ depend on $2n-i$ and $2n-i-1$, respectively. Note that the points $(2n-i-j,j), (2n-i-j-1,j), (2n-i-j-1,j+1), (2n-i-j,j-1)$ are all in $\Delta_{2n}$ if and only if the points $(i,j), (i+1,j), (i,j+1), (i+1,j-1)$ are also all in $\Delta_{2n}$, so $\Tilde{h}$ satisfies $\Tilde{h}_{i,j} + \Tilde{h}_{i+1,j} \geq \Tilde{h}_{i,j+1} + \Tilde{h}_{i+1,j-1}$ for all $i,j$ if and only if the same is true for $h$.

        Similarly, we can show that $\Tilde{h}$ satisfies $\Tilde{h}_{i,j} + \Tilde{h}_{i,j+1} \geq \Tilde{h}_{i+1,j} + \Tilde{h}_{i-1,j+1}$ for all $i,j$ if and only if $h$ satisfies $h_{i+1,j} + h_{i,j+1} \geq h_{i,j} + h_{i+1,j+1}$ for all $i,j$,
        and $\Tilde{h}$ satisfies $\Tilde{h}_{i+1,j} + \Tilde{h}_{i,j+1} \geq \Tilde{h}_{i,j} + \Tilde{h}_{i+1,j+1}$ for all $i+j\neq n-1$ if and only if $h$ satisfies $h_{i,j} + h_{i,j+1} \geq h_{i+1,j} + h_{i-1,j+1}$ for all $i+j\neq n+1$.

        Finally, we need to prove that if $\Tilde{h}$ satisfies all required inequalities, then $h$ satisfies $h_{i,j} + h_{i,j+1} \geq h_{i+1,j} + h_{i-1,j+1}$ for all $i+j= n+1$, which is equivalent to $\Tilde{h}$ satisfies $\Tilde{h}_{i+1,j} + \Tilde{h}_{i,j+1} \geq \Tilde{h}_{i,j} + \Tilde{h}_{i+1,j+1} - \nu_1$ for all $i+j = n-1$. We have for $i+j = n-1$,
        \begin{align*}
            \Tilde{h}_{i+1,j} - \Tilde{h}_{i,j} &\geq \Tilde{h}_{i+2,j-1} - \Tilde{h}_{i+1,j-1}\\
            &\geq \cdots\\
            &\geq \Tilde{h}_{n,0} - \Tilde{h}_{n-1,0}\\
            &\geq 0.
        \end{align*}
        On the other hand,
        \begin{align*}
            \Tilde{h}_{i+1,j+1} - \Tilde{h}_{i,j+1} &\leq \Tilde{h}_{i,j+2} - \Tilde{h}_{i-1,j+2}\\
            &\leq\cdots\\
            &\leq \Tilde{h}_{1,n} - \Tilde{h}_{0,n}.
        \end{align*}
        Note that the subset $\{\Tilde{h}_{i,j}~|~j\geq n\}$ itself forms a hive, so we must have
        \[ \Tilde{h}_{1,n} - \Tilde{h}_{0,n} \leq \Tilde{h}_{0,n+1} - \Tilde{h}_{0,n} = \nu_1. \]
        Thus,
        \[ \Tilde{h}_{i+1,j} - \Tilde{h}_{i,j} \geq 0 \geq \Tilde{h}_{i+1,j+1} - \Tilde{h}_{i,j+1} - \nu_1, \]
        as expected.
    \end{proof}

\subsection{Skew skeps}

    \begin{definition}\label{def:skew-skep}
        A \textit{skew skep} is a function $h:\Delta_{2n}\rightarrow\ZZ$ obeying the following inequalities for all $(i,j)$ such that the indices remain within $\Delta_{2n}$.
        \begin{align*}
            h_{i,j} + h_{i+1,j} & \geq h_{i,j+1} + h_{i+1,j-1}, \\
            h_{i,j} + h_{i,j+1} & \geq h_{i+1,j} + h_{i-1,j+1}, \\
            h_{i,j} + h_{i+1,j} & \geq h_{i,j-1} + h_{i+1,j+1}, \\
            h_{i,j} + h_{i,j+1} & \geq h_{i-1,j} + h_{i+1,j+1}.
        \end{align*}
        Let $\SkewSkep_{\lambda/\mu,\nu/\rho}^\kappa$ be the set of skew skeps with $h_{00} = 0$, $h^\uparrow = (\lambda_1,\nu_1,\lambda_2,\nu_2,\ldots,\lambda_n,\nu_n)$, $h^\rightarrow = (\mu_1,\rho_1,\mu_2,\rho_2\,\ldots,\mu_n,\rho_n)$, and $h^\nwarrow = \kappa$.
    \end{definition}

    \begin{thm}\label{thm:skew_skep}
        Given partitions $\lambda,\mu,\nu,\rho$ of length $n$, and $\kappa$ of length $2n$, then
        \[ c_{\lambda/\mu, \nu/\rho}^\kappa = |\SkewSkep_{\lambda/\mu,\nu/\rho}^\kappa|. \]
    \end{thm}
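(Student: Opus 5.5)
We reduce Theorem~\ref{thm:skew_skep} to Theorem~\ref{thm:skew_hive} using the octahedron recurrence, in the same way that Theorem~\ref{thm:speyer-hive-skep} relates hives and skeps. By Theorem~\ref{thm:skew_hive}, it suffices to construct a bijection $\SkewHive_{\lambda/\mu,\nu/\rho}^\kappa\to\SkewSkep_{\lambda/\mu,\nu/\rho}^\kappa$.

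\textbf{Choice of route.} We do not work with skew hives directly. Via Bijection~\ref{bij:hive-skewhive}, a skew hive is an ordinary hive on $\Delta_{2n}$, up to row reversal and subtracting multiples of $\nu_1$ along diagonals. Its boundary is $\mu\ast_{\nu_1}\rho$ on one side and $\lambda\ast_{\nu_1}\nu$ on the other. We would rather apply Theorem~\ref{thm:speyer-hive-skep} to this ordinary hive, since that theorem turns concatenated boundaries into interleaved ones. The obstacle is that we need interleaving on two sides simultaneously: $(\lambda_1,\nu_1,\lambda_2,\ldots)$ and $(\mu_1,\rho_1,\ldots)$. Speyer's skep instead interleaves the two boundary sides $h^\leftarrow,h^\uparrow$ with each other.

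We therefore work with the skew hive itself. Write $\Delta_{2n}$ as the top triangle $\{i+j\le n\}$ plus the trapezoid $\{i+j\ge n\}$. On a skew hive, the rhombus inequalities fail only across the diagonal $i+j=n-1\to n$. Consequently the sub-triangle $\{j\ge n\}$ is a hive, and likewise $\{i\ge n\}$ and $\{i+j\le n\}$ behave like hives glued together.

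\textbf{Main step.} We would define a ``skew octahedron recurrence'' on a pyramid over $\Delta_{2n}$, with time slices $t$, and run Speyer's evolution from $S^{\bbot}_{\hive}$ to $S^{\bbot}_{\skep}$. The plan is to carry out the shuffling in stages. Each stage is a local octahedron move that swaps one adjacent pair of boundary entries, for example moving $\nu_1$ past $\lambda_2,\ldots,\lambda_n$. This is analogous to the Bender--Knuth-style intermediate rules mentioned in the introduction. The stages are:
\begin{enumerate}
\item Define intermediate models $\SkewHive^{(w)}$ indexed by shuffles $w$ of $(\lambda,\nu)$ and of $(\mu,\rho)$. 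Each model is cut out by rhombus inequalities together with ``skep-type'' inequalities in the columns where a $\lambda$-entry is adjacent to a $\nu$-entry.
\item Show that one octahedron step, i.e.\ a tropical Plücker/max-plus move on a single diagonal strip, gives a bijection between consecutive models.
\item Conclude that the endpoints are $\SkewHive$ and $\SkewSkep$.
\end{enumerate}
For stage~2 we would invoke Theorem~\ref{thm:speyer-hive-skep} locally. Restricted to the relevant strip, the move is exactly Speyer's hive-to-skep move for a smaller $n$, applied to a sub-triangle that is a genuine hive or skep.

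\textbf{Main obstacle.} The hard part is the interaction across the defect diagonal $i+j=n-1$, where the third rhombus inequality is absent. We must check two things there: that octahedron moves preserve the absence of this constraint, and that they do not create spurious constraints. As in the proof of Theorem~\ref{thm:skew_hive}, we would handle this by translating back to an honest hive via Bijection~\ref{bij:hive-skewhive}. There the defect reappears as an inequality that is automatically implied by the others, using the chain of differences ending in $\tilde h_{1,n}-\tilde h_{0,n}\le\nu_1$.

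We would then verify directly that the final skep inequalities are equivalent to the evolved hive conditions, and that the boundary reads $h^\uparrow=(\lambda_1,\nu_1,\ldots)$, $h^\rightarrow=(\mu_1,\rho_1,\ldots)$, and $h^\nwarrow=\kappa$. Injectivity and surjectivity come from the invertibility of the octahedron recurrence. As a sanity check, we would test the construction on Example~\ref{ex:skew-hives}, which should yield three skew skeps.
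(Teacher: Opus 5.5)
There is a genuine gap in Stage 2, which carries all the content of the theorem. Reducing to Theorem~\ref{thm:skew_hive} through an octahedron-recurrence bijection $\SkewHive^\kappa_{\lambda/\mu,\nu/\rho}\to\SkewSkep^\kappa_{\lambda/\mu,\nu/\rho}$ is the right shape. What is missing is the claim that each move carries one inequality system exactly onto the next, and your justification for it fails.

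Theorem~\ref{thm:speyer-hive-skep} cannot be ``invoked locally on a strip.'' It is a statement about a function on all of $T_m$, with its own recurrence rules at $i=0$, $j=0$ and at the apex. Its proof is a global wavefront argument. A move changes entries that enter rhombus inequalities with entries outside any sub-triangle, so knowing that a sub-triangle is a hive or skep controls nothing there.

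The moves are also not Speyer's moves. From a skew hive, the only local minima of the heights lie on the whole diagonal $i+j=n$, so the first move raises that diagonal, using the max-rule at interior points. Speyer's first move is at the single apex $(0,0)$ and uses the linear corner rule. As you noticed yourself, Speyer's evolution interleaves two \emph{different} sides, and never $\lambda$ with $\nu$ inside $h^\uparrow$. This holds whether you apply it to a hive sub-triangle such as $\{j\ge n\}$ or to $T_{2n}$ starting from $S^{\bbot}_{\hive}$.

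Your treatment of the defect diagonal also breaks once moves are made. Bijection~\ref{bij:hive-skewhive} uses the reflection $(i,j)\mapsto(2n-i-j,j)$, which sends the octahedron pair $(i,j\pm1)$ to an anti-diagonal pair. It also subtracts $\min(i+j,n)\nu_1$, which is not affine in $i+j$ at $i+j=n$, so it does not commute with the recurrence there. The bound $\ti h_{1,n}-\ti h_{0,n}\le\nu_1$ therefore gives no control over the evolved arrays.

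The missing idea is the right three-dimensional domain, combined with the Henriques--Kamnitzer propagation lemma. The paper runs the recurrence on $\Pi_n$, the region $|t|\le i+j\le 2n-|t|$.
\begin{itemize}
\item In $\Pi_n$ a skew hive is the \emph{folded} bottom section $t=-\min(i+j,2n-i-j)$. For $i+j=n-1$, the points $(i,j,1-n)$, $(i+1,j,-n)$, $(i,j+1,-n)$, $(i+1,j+1,1-n)$ are pairwise at distance $\sqrt2$. They form a tetrahedron, not a unit rhombus, which is exactly why the third inequality is absent in Definition~\ref{def:skew-hive}. No shifting by $\nu_1$ is needed.
\item A skew skep is the zigzag section $t=-\epsilon(i,j)$ of the same domain.
\item Lemma~\ref{lem:any-rhombus-int} shows that the skew-hive inequalities imply \emph{all} rhombus inequalities on $\Pi_n$, because $\SSS_{\bbot}$ is transverse to every wavefront. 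The skew-skep inequalities imply them too, using $\SSS_1$ for wavefronts of types (1) and (2) and $\SSS_2$ for type (3).
\item The converse is immediate, and the boundary check is the linear recurrence along $i=0$ and $j=0$.
\end{itemize}
The bijection is then: extend by the recurrence, and restrict to the other section.

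Your staged scheme is essentially the phased skew hives of Section~\ref{sec:peelable}. There a stage moves whole diagonals, not one boundary entry. The same shuffle applies to $(\lambda,\nu)$ and to $(\mu,\rho)$, since the height depends only on $i+j$. Each stage still needs this wavefront argument, or the paper's alternative via peelable tableaux and Bender--Knuth involutions, rather than a local appeal to Speyer.
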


    \begin{proof}[Proof teaser]
        Theorem \ref{thm:skew_skep} will follow from Theorem \ref{thm:skew-hive-skep}, which is an analogue of Theorem \ref{thm:speyer-hive-skep}. 
    \end{proof}    

    \begin{example}\label{ex:skew-skeps}
        For $\lambda = (2,2), \mu = (1,0), \nu = (4,3), \rho = (2,0)$, and $\kappa = (4,3,1)$, we have $c_{\lambda/\mu, \nu/\rho}^\kappa = 3$. The three skew skeps with the given boundaries are
        \[ \begin{matrix}
            11 & & & & \\
            8 & 11 & & & \\
            6 & 7 & 10 & & \\
            2 & 5 & 6 & 7 & \\
            0 & 1 & 3 & 3 & 3
        \end{matrix}, \quad\quad\quad
        \begin{matrix}
            11 & & & & \\
            8 & 11 & & & \\
            6 & 8 & 10 & & \\
            2 & 6 & 6 & 7 & \\
            0 & 1 & 3 & 3 & 3
        \end{matrix}, \quad\quad\quad
        \begin{matrix}
            11 & & & & \\
            8 & 11 & & & \\
            6 & 8 & 10 & & \\
            2 & 6 & 7 & 7 & \\
            0 & 1 & 3 & 3 & 3
        \end{matrix}. \]
    \end{example}

    The strategy for proving Theorem \ref{thm:skew_skep} is similar to Theorem \ref{thm:speyer-hive-skep} with slightly different regions.

    \begin{definition}
        Let $\Pi_n = \{(i,j,t)~|~|t|\leq n; 0 \leq i,j\leq 2n - i - j; |t|\leq i+j \leq 2n-|t|; t\equiv i+j~\text{mod}~2 \}$.
        Let $\Tilde{\epsilon}(i,j)$ be $0$ if $i+j \equiv 0~\text{mod}~2$ and $1$ otherwise. We define the following subsets of $\Pi_n$
        \begin{align*}
            S^{\ttop}_{\skewhive} &= \{(i,j,i+j):(i,j)\in\Delta_{2n},i+j \leq n\} \\
            &\quad\cup \{(i,j,2n-i-j):(i,j)\in\Delta_{2n},i+j \geq n\},\\
            S^{\bbot}_{\skewhive} &= \{(i,j,-i-j):(i,j)\in\Delta_{2n},i+j \leq n\} \\
            &\quad\cup \{(i,j,-2n+i+j):(i,j)\in\Delta_{2n},i+j \geq n\},\\
            S^{\ttop}_{\skewskep} &= \{(i,j,\epsilon(i,j)):(i,j)\in\Delta_{2n}\},\\
            S^{\bbot}_{\skewskep} &= \{(i,j,-\epsilon(i,j)):(i,j)\in\Delta_{2n}\}.
        \end{align*}
    \end{definition}

    \begin{example}
        Consider the following function on $\Pi_2$ obeying the octahedron recurrence.

        \begin{center}
        \begin{tabular}{*{10}{c}}
             & \raisebox{\height}{\scalebox{0.85}{$\left[\begin{matrix}
            \textcolor{red}{4} \\
             & \textcolor{red}{3} \\
             &  & \textcolor{red}{1}
        \end{matrix}\right]$}} & \raisebox{\height}{\scalebox{0.85}{$\left[\begin{matrix}
            \textcolor{orange}{8} \\
             & \textcolor{orange}{7} & \\
            \textcolor{orange}{2} &  & \textcolor{orange}{6} \\
             & \textcolor{orange}{1} &  & \textcolor{orange}{3}
        \end{matrix}\right]$}} & \raisebox{\height}{\scalebox{0.85}{$\left[\begin{matrix}
            \textcolor{orange}{11} & & & & \\
             & \textcolor{orange}{11} & & & \\
             \textcolor{blue}{6} &  & \textcolor{orange}{10} & & \\
             & \textcolor{blue}{5} &  & \textcolor{orange}{7} & \\
            \textcolor{orange}{0} &  & \textcolor{blue}{3} &  & \textcolor{orange}{3}
        \end{matrix}\right]$}} & \raisebox{\height}{\scalebox{0.85}{$\left[\begin{matrix}
            9 \\
             & 9 & \\
            4 &  & 7 \\
             & 2 &  & 3
        \end{matrix}\right]$}} & \raisebox{\height}{\scalebox{0.85}{$\left[\begin{matrix}
            7 \\
             & 6 \\
             &  & 2
        \end{matrix}\right]$}} \\
            $t$ & -2 & -1 & 0 & 1 & 2
        \end{tabular}
        \end{center}
        
        The orange and red entries form $S^{\bbot}_{\skewhive}$, which is the first hive in Example \ref{ex:skew-hives}. The orange and blue entries form $S^{\bbot}_{\skewskep}$, which is the first skep in Example \ref{ex:skew-skeps}.
    \end{example}

    Before stating Theorem \ref{thm:skew-hive-skep}, we need one more detail, which explains where the inequalities come from.

    \begin{definition}\label{def:unit-triangle-rhombus}
        A \textit{unit triangle} is an equilateral triangle whose vertices are in $\Pi_n$ and edges have length $\sqrt{2}$. In other words, the vertices are $\{w\pm(1,0,0),w\pm(0,1,0),w\pm(0,0,1)\}$ for some $w\in \ZZ^3$ and choices of signs. A \textit{unit rhombus} consists of two coplanar adjacent unit triangles, bordering along an edge. In a unit rhombus, we call the line segment between the $60^\circ$ angles the \textit{long diagonal} and the line segment between the $120^\circ$ angles the \textit{short diagonal}.

        Let $h:\Pi_n\rightarrow\ZZ$ be a function, and let $R$ be a unit rhombus with long diagonal $(x, y)$ and short diagonal $(x',y')$. We say $h$ satisfies the \textit{rhombus inequality at $R$} if $h(x')+h(y')\geq h(x) + h(y)$. We say $h$ satisfies all rhombus inequalities if $h$ satisfies the rhombus inequality at every unit rhombus in $\Pi_n$.
    \end{definition}

    Figure \ref{fig:t-coor} shows the $t$-coordinates of $S^{\bbot}_{\skewhive}$ and $S^{\bbot}_{\skewskep}$ in $\Pi_n$. The black bold edges are short edges of the green rhombi. The readers can check that the rhombi inequalities in Figure \ref{subfig:skew-hive-coor} (resp. Figure \ref{subfig:skew-skep-coor}) match the inequality in Definition \ref{def:skew-hive} (resp. Definition \ref{def:skew-skep}).

    \begin{figure}[h!]
     \centering
        \begin{subfigure}[b]{0.4\textwidth}
            \centering
            \includegraphics[scale = 0.9]{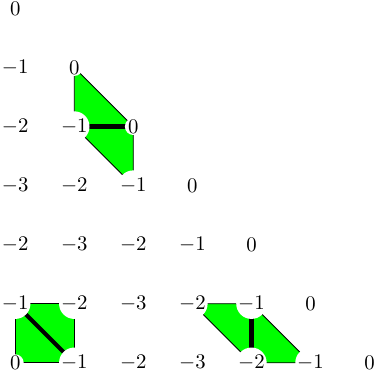}
            \caption{Skew hive}
            \label{subfig:skew-hive-coor}
        \end{subfigure}
     \quad\quad
        \begin{subfigure}[b]{0.4\textwidth}
            \centering
            \includegraphics[scale = 0.9]{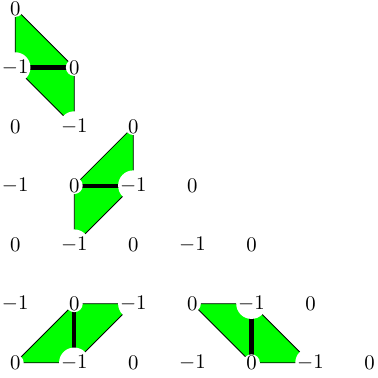}
            \caption{Skew skep}
            \label{subfig:skew-skep-coor}
        \end{subfigure}

        \caption{$t$-coordinates and rhombi of skew hives and skew skeps in $\Pi_n$}
        \label{fig:t-coor}
    \end{figure}

    \begin{thm}\label{thm:skew-hive-skep}
        Let $\Tilde{h}:\Pi_n\rightarrow\ZZ$ obeying the octahedron recurrence, then the following are equivalent
        \begin{enumerate}
            \item $\Tilde{h}|_{S^{\ttop}_{\skewhive}} \in \SkewHive_{\nu/\rho, \lambda/\mu}^\kappa$,\\
            \item $\Tilde{h}|_{S^{\ttop}_{\skewskep}} \in \SkewSkep_{\nu/\rho, \lambda/\mu}^\kappa$,\\
            \item $\Tilde{h}|_{S^{\bbot}_{\skewskep}} \in \SkewSkep_{\lambda/\mu, \nu/\rho}^\kappa$,\\
            \item $\Tilde{h}|_{S^{\bbot}_{\skewhive}} \in \SkewHive_{\lambda/\mu, \nu/\rho}^\kappa$,\\
            \item $\Tilde{h}$ satisfies all rhombus inequalities.
        \end{enumerate}
    \end{thm}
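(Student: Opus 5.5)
We will prove the equivalence by showing that each of (1)--(4) is equivalent to (5), in the same way Speyer proves Theorem \ref{thm:speyer-hive-skep}. The octahedron recurrence on $\Pi_n$ is determined by the values on any one of the four "stepped surfaces" $S^{\ttop}_{\skewhive}$, $S^{\ttop}_{\skewskep}$, $S^{\bbot}_{\skewskep}$, $S^{\bbot}_{\skewhive}$. Each of these is a section of $\Pi_n$ over $\Delta_{2n}$ whose $t$-coordinate changes by $\pm1$ between neighbours. Any two of them are connected by a sequence of elementary moves, each of which replaces a vertex $(i,j,t-1)$ by $(i,j,t+1)$ (or the reverse) at a local extremum of the surface. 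Such a move is exactly one application of the recurrence, with the truncated cases on the boundary lines $i=0$, $j=0$. The region $\Pi_n$ is chosen so that, for $i+j\le n$, the surfaces $t=\pm(i+j)$ bound it, and for $i+j\ge n$ the surfaces $t=\pm(2n-i-j)$ do.

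First we check that the boundary data are preserved along the whole region. On the three boundary edges of $\Delta_{2n}$ the recurrence is linear: the one-dimensional cases simply propagate the differences. One checks that the values on the lines $i=0$, $j=0$ and $i+j=2n$ evolve so that the boundary of $S^{\bbot}_{\skewhive}$, namely $(\lambda,\nu)$, $(\mu,\rho)$, $\kappa$, becomes the interleaved boundary of $S^{\bbot}_{\skewskep}$. It becomes the boundary with $\lambda\leftrightarrow\nu$, $\mu\leftrightarrow\rho$ at the top. This is the same bookkeeping as in Speyer's case, done separately on the two halves $i+j\le n$ and $i+j\ge n$. On the line $i+j=n$ no move ever happens, so the values at $t=0$, $\epsilon=0$ are shared by all four surfaces.

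The core step is the local lemma (Speyer's, or Henriques--Kamnitzer's): if $f$ obeys the octahedron recurrence on a region swept by elementary moves, then the rhombus inequalities on one stepped surface propagate to the adjacent surface after a move. Iterating, rhombus inequalities on a surface $S$ imply all rhombus inequalities in the part of $\Pi_n$ swept between $S$ and the opposite extremal surface. Concretely, we show two things:
\begin{itemize}
\item[(a)] The inequalities of Definition \ref{def:skew-hive} (resp. \ref{def:skew-skep}) are exactly the rhombus inequalities for the unit rhombi lying in the surface $S^{\bbot}_{\skewhive}$ (resp. $S^{\bbot}_{\skewskep}$), as recorded in Figure \ref{fig:t-coor}. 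The missing third inequality along $i+j=n-1$ corresponds to the crease of $S_{\skewhive}$ at $i+j=n$, where no planar rhombus exists.
\item[(b)] Rhombus inequalities on the tetrahedral/octahedral cells cut out by one move follow from those on the previous surface, using the tropical identity $a+b=\max(c+d,e+f)$.
\end{itemize}
Then (4)$\Rightarrow$(5) follows by sweeping upward from $S^{\bbot}_{\skewhive}$ through all of $\Pi_n$, and (5)$\Rightarrow$(1)--(4) is trivial by restriction together with the boundary computation. The same sweep starting from $S^{\bbot}_{\skewskep}$ gives (3)$\Rightarrow$(5), which covers the region above it; below it we sweep downward, which is symmetric.

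The main obstacle is (a)/(b) near the crease $i+j=n$ and near the boundary lines. Here $\Pi_n$ is not a full octahedral region, so we must verify that every unit rhombus of $\Pi_n$ is reached by the propagation, and that the boundary recurrence cases do not break the inequalities. For the latter we would imitate Speyer's treatment of the $h_{11}\ge h_{00}$ corner inequality, which here is absent because $h_{00}=0$ sits on a corner of $\Delta_{2n}$ with no folded rhombus.

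If the direct local argument at the crease is messy, there is a fallback. Within $i+j\le n$ and within $i+j\ge n$ the region is (after reflection) a copy of Speyer's $T_n$-type region, so we apply Theorem \ref{thm:speyer-hive-skep}'s proof to each half separately. We then glue along the common line $i+j=n$, whose values are fixed. The only inequalities straddling the line are those involving $(i,j)$ with $i+j=n\pm1$, and we check them by the telescoping argument used in the proof of Theorem \ref{thm:skew_hive}.
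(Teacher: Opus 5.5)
Your easy direction and the linear boundary bookkeeping on $i=0$, $j=0$ match the paper. The step that carries the proof, however, is your ``local lemma'' that rhombus inequalities on one stepped surface propagate to the next surface after one octahedron move. That lemma is false as stated, and it is not what Henriques--Kamnitzer or Speyer prove.

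Let $\SSS$ be a section containing the four faces $\{v,(i+a,j,t),(i,j+b,t)\}$, $a,b\in\{\pm1\}$, around an interior point $v=(i,j,t-1)$, so that the upward move at $v$ is defined. Two of these faces sharing an edge through $v$ are never coplanar, since their other three vertices span the plane at height $t$. So $v$ is never an endpoint of the short diagonal of a rhombus in $\SSS$, and the inequalities on $\SSS$ only bound $\tilde h(v)$ from above. Start from a linear function and lower $\tilde h(v)$: every inequality on $\SSS$ still holds, while $\tilde h(i,j,t+1)=\max(\cdots)-\tilde h(v)$ grows without bound. Suppose $(i+1,j+1)$ has height $t-1$ in $\SSS$; this happens constantly, e.g.\ next to every height $-1$ point of the skep surface. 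Then the new section contains the rhombus with short diagonal $(i+1,j,t),(i,j+1,t)$ and long diagonal $(i,j,t+1),(i+1,j+1,t-1)$, and its inequality fails. A sweep could only work with a genuinely three-dimensional inductive hypothesis (all rhombi in the already swept region). You neither formulate nor check such a hypothesis, and its verification near the crease and the boundary is exactly what you leave open.

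The paper's mechanism is different. By Henriques--Kamnitzer's Lemma 3.1, for a fixed wavefront $\WW$ the rhombus inequalities along $\WW\cap\SSS$ are equivalent for all sections $\SSS$ transverse to $\WW$. The $\SSS$ above is not transverse to the wavefront through the violated rhombus, since it already contains its face $\{(i+1,j,t),(i,j+1,t),(i+1,j+1,t-1)\}$. Moreover, every rhombus lies in a section transverse to some wavefront (Lemmas \ref{lm:tranverse} and \ref{lem:any-rhombus-int}). So (4)$\Rightarrow$(5) reduces to checking that $\SSS_{\bbot}$ is transverse to every wavefront and that its rhombus inequalities are exactly the skew hive inequalities.

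Two further claims fail.

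\textbf{The skep points do not determine a section.} On $S^{\bbot}_{\skewskep}$ every unit square has both diagonals at equal height. The first two families in Definition \ref{def:skew-skep} are rhombi of the triangulation by anti-diagonals, and the last two are rhombi of the triangulation by main diagonals. So there is no single stepped surface to sweep from. The paper uses two different sections $\SSS_1,\SSS_2$ through these points, one transverse to the wavefronts of types (1) and (2) and the other to those of type (3). Only together do they give (3)$\Rightarrow$(5).

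\textbf{Moves certainly happen on $i+j=n$.} $\Pi_n$ is thickest there ($|t|\le n$). The skew hive sections pass through $(i,j,\pm n)$, while the skep sections pass through $t\in\{0,\pm1\}$. In the paper's $n=2$ example, the values on $i+j=2$ are $4,3,1$ on $S^{\bbot}_{\skewhive}$ but $6,5,3$ on $S^{\bbot}_{\skewskep}$. The frozen line is $i+j=2n$, which is why $\kappa$ is common to all four.

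This also sinks your fallback, for three reasons. There are no fixed values on $i+j=n$ to glue along. The half $\Pi_n\cap\{i+j\le n\}$ has thickness $n$ along the whole edge $i+j=n$, so it is not a copy of $T_n$, which has thickness $n$ only at a corner. And the telescoping argument from the proof of Theorem \ref{thm:skew_hive} handles one planar inequality, not the rhombus inequalities of $\Pi_n$.
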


    To prove Theorem \ref{thm:skew-hive-skep}, we will need more definitions. These notions are adapted, with suitable changes, from \cite{henriques2006octahedron}.

\subsection{Sections and wavefronts}

    \begin{definition}
        Let
        \[ \RR\Delta_{2n} := \text{Hull}\{(0,0),(0,2n),(2n,0)\}, \]
        and
        \[ \RR \Pi_n := \text{Hull}\{(0,0,0),(0,n,n),(n,0,n),(0,n,-n),(n,0,-n),(0,2n,0),(2n,0,0)\}. \]
    \end{definition}

    \begin{definition}
        A \textit{section} $\SSS$ is a union of unit triangles in $\RR \Pi_n$ such that the projection $\operatorname{proj}:\SSS\rightarrow \RR\Delta_{2n}$ is bijective. We emphasize that a section is \emph{not} determined by the discrete set $\SSS\cap \Z^3$. For example, see the sections $\SSS_1$ and $\SSS_2$ in Figure~\ref{fig:S-triangles}.
    \end{definition}

    \begin{example}\label{ex:Sbot}
        Let
        \[
        \SSS_{\bbot} = 
        \]
        \[
        \text{Hull}\{(0,0,0),(0,n,-n),(n,0,-n)\} \bigcup \text{Hull}\{(0,n,-n),(n,0,-n),(0,2n,0),(2n,0,0)\},
        \]
        then $\SSS_{\bbot}$ is a section. In particular, $\SSS_{\bbot}$ is a union of the triangles in Figure \ref{subfig:Sbot-triangles}.

        \begin{figure}[h!]
           \centering
            \begin{subfigure}[b]{0.3\textwidth}
                \centering
                \includegraphics[scale = 0.7]{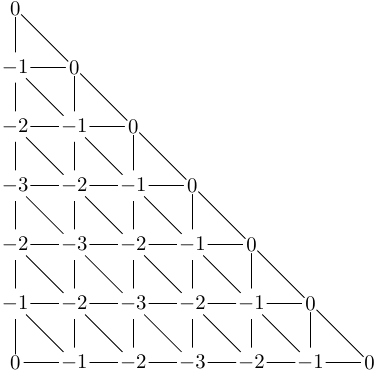}
                \caption{$\SSS_{\bbot}$}
                \label{subfig:Sbot-triangles}
            \end{subfigure}
         \quad
            \begin{subfigure}[b]{0.3\textwidth}
                \centering
                \includegraphics[scale = 0.7]{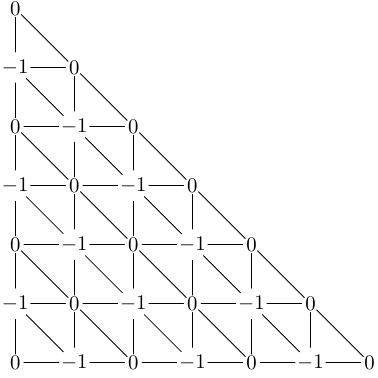}
                \caption{$\SSS_1$}
                \label{subfig:S1-triangles}
            \end{subfigure}
         \quad
            \begin{subfigure}[b]{0.3\textwidth}
                \centering
                \includegraphics[scale = 0.7]{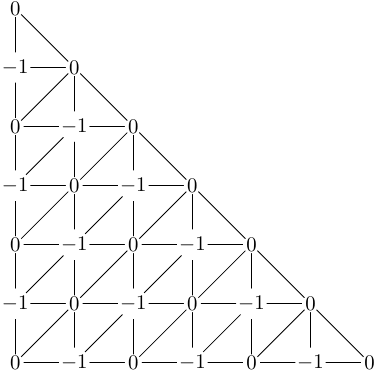}
                \caption{$\SSS_2$}
                \label{subfig:S2-triangles}
            \end{subfigure}

            \caption{Sections}
            \label{fig:S-triangles}
        \end{figure}
    \end{example}

    \begin{definition}
         A \textit{wavefront} $\WW$ is a two-dimensional subset of $\RR \Pi_n$ of one of these types:
         \begin{itemize}
             \item[(1)] The intersection of $\RR\Pi_n$ with $\{t-c = i-j\}$, for $c$ even, $|c| < 2n$.
             \item[(2)] The intersection of $\RR\Pi_n$ with $\{t-c = j-i\}$, for $c$ even, $|c| < 2n$.
             \item[(3)] The intersection of $\RR\Pi_n$ with $\{|t-c| = i+j\}$, for $c$ even, $0 < |c| < 2n$.
         \end{itemize}

         We say that a wavefront $\WW$ and a section $\SSS$ \textit{intersect transversely} if $\WW\cap \SSS$ has dimension $\leq 1$ and in case $\WW$ is of type (3) above, the intersection does not contain the \textit{cut point } $(0,0,c)$.
    \end{definition}

    \begin{example}
        One can check that the section $\SSS_{\bbot}$ in Example \ref{ex:Sbot} intersects transversely to all wavefronts. For example, for $n=3$, the intersection of $\SSS_{\bbot}$ with the wavefronts are shown in Figure \ref{fig:intersect-type}.

        \begin{figure}[h!]
         \centering
            \begin{subfigure}[b]{0.3\textwidth}
                \centering
                \includegraphics[scale = 0.7]{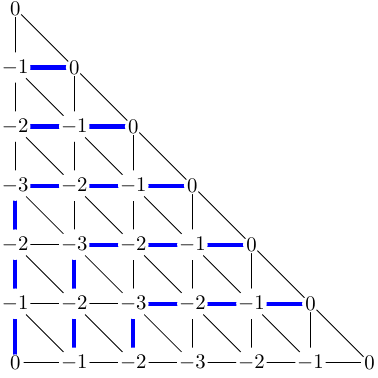}
                \caption{Type (1)}
                \label{intersect-type1}
            \end{subfigure}
         \quad
            \begin{subfigure}[b]{0.3\textwidth}
                \centering
                \includegraphics[scale = 0.7]{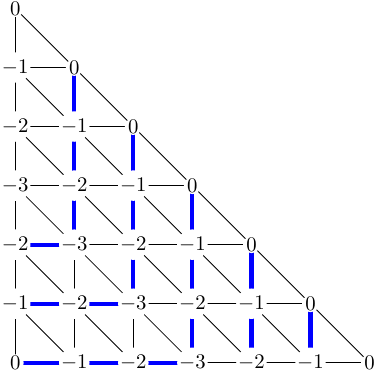}
                \caption{Type (2)}
                \label{intersect-type2}
            \end{subfigure}
         \quad
            \begin{subfigure}[b]{0.3\textwidth}
                \centering
                \includegraphics[scale = 0.7]{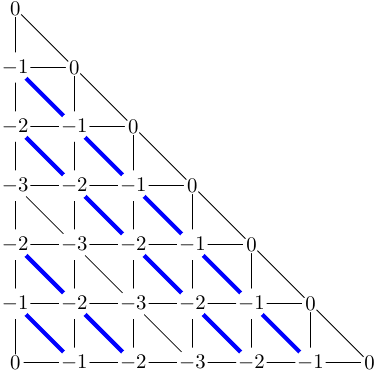}
                \caption{Type (3)}
                \label{intersect-type3}
            \end{subfigure}

            \caption{Intersection of $\SSS_{\bbot}$ with the wavefronts}
            \label{fig:intersect-type}
        \end{figure}
    \end{example}

    \begin{lemma}\label{lm:tranverse}
        For any unit rhombus $R$, there is a section $\SSS_0$ containing $R$ and a wavefront $\WW$ transverse to $\SSS_0$.
    \end{lemma}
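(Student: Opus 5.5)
Given a unit rhombus $R$, I will first classify its possible orientations in $\ZZ^3$. A unit triangle has vertices $w\pm e_a$ for three coordinate directions, so it lies in a plane of the form $\pm i\pm j\pm t=\text{const}$. A rhombus is two such coplanar triangles. Its long diagonal is a vector $2e_a$ for some coordinate $a\in\{i,j,t\}$, and its short diagonal is $e_b\pm e_c$ for the other two coordinates. So there are three families of rhombi, according to whether the long diagonal is in the $i$, $j$, or $t$ direction. Each family splits further by the sign pattern of the plane.

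For each family I will exhibit a wavefront that cuts through $R$ transversally, and then build a section through $R$.

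\emph{Rhombi with long diagonal vertical.} These are the rhombi whose long diagonal is in the $t$ direction. The short diagonal is horizontal, for example $(i\pm1,j,t)$ or $(i,j\pm1,t)$. Here I take a wavefront of type (1) or (2), $\{t-c=\pm(i-j)\}$, or of type (3), through the centre of $R$. The parity conventions on $c$ should be arranged so that the wavefront passes through the short diagonal and only meets $R$ in a segment.

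\emph{Rhombi with horizontal long diagonal.} Here I choose the wavefront among types (1)--(3) whose normal is not parallel to the plane of $R$.

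In every case the wavefront's slope is $\pm1$ in $t$ against a linear form in $(i,j)$, while the rhombus plane has a different slope, so their intersection is $1$-dimensional.

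To construct $\SSS_0$, I will take a ``tilted'' piecewise linear surface. A natural choice is the graph of a $1$-Lipschitz, in the $\ell^1$-sense, function $t=f(i,j)$ on $\RR\Delta_{2n}$, with $f$ agreeing on $\operatorname{proj}(R)$ with the affine plane of $R$. I then extend $f$ outward by taking $f$ equal to the plane of $R$ clipped (via max/min) against the boundary surfaces $\pm\min(i+j,2n-i-j)$ of $\RR\Pi_n$. Such a clipped function is piecewise linear on unit triangles, so its graph is a union of unit triangles projecting bijectively onto $\RR\Delta_{2n}$, and hence a section. It contains $R$ because the clipping does not act near $R$, since $R\subset\RR\Pi_n$.

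Transversality then needs to be checked: I must show $\WW\cap\SSS_0$ has dimension $\le1$. I would arrange this by choosing $\WW$ with slope different from every facet slope of $\SSS_0$ near the intersection. Alternatively, I would choose $\SSS_0$ as a small modification of $\SSS_{\bbot}$ (which is transverse to all wavefronts, by the example above) that locally bends to contain $R$.

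For type (3) I must also avoid the cut point $(0,0,c)$. I would handle this by shifting $c$ or choosing the other sign.

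The main obstacle is this global consistency. The clipped surface may accidentally contain a full $2$-dimensional piece of the chosen wavefront, or hit the cut point. I expect to resolve it by a case analysis on which of the three families $R$ belongs to and on whether $R$ lies in the region $i+j\le n$ or $i+j\ge n$. In each case I would pick the wavefront whose defining plane is not among the finitely many facet planes used by $\SSS_0$.
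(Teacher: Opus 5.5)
There is a genuine gap. It starts with your description of a unit rhombus, which is incorrect, and your case split rests on it. Two coplanar adjacent unit triangles are, for instance, $\{w+e_i,w+e_j,w+e_t\}$ and $\{w+e_i,w+e_j,w+e_i+e_j-e_t\}$. So the short diagonal is parallel to $e_i-e_j$ (length $\sqrt2$) and the long diagonal to $e_i+e_j-2e_t$ (length $\sqrt6$). For example, the hive inequality $h_{i,j}+h_{i+1,j}\ge h_{i,j+1}+h_{i+1,j-1}$ on the plane $t=-(i+j)$ has diagonals $(1,0,-1)$ and $(1,-2,1)$. A vector $2e_a$ joins opposite vertices of an octahedron (the pairs in the octahedron recurrence), never two corners of a rhombus, so no rhombus has a vertical long diagonal.

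This matters because $\WW$ is not free. For Lemma~\ref{lem:any-rhombus-int} to yield the inequality at $R$ itself, $R$ must be one of the rhombi along $\WW\cap\SSS_0$. That is, $\WW$ must contain the short diagonal of $R$ and separate the endpoints of its long diagonal. Rhombus planes and wavefront planes all have normals $(\pm1,\pm1,\pm1)$, and by parity exactly two such lattice planes pass through the centre of $R$. One is the plane of $R$. The other is the plane through the short diagonal whose normal is that of $R$ with the sign reversed in the coordinate carrying coefficient $\pm2$ in the long diagonal.

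So your first-case recipe (a lattice wavefront through the centre meeting $R$ in a segment) is the right one for \emph{every} rhombus, but you apply it only to an empty family. For the other rhombi your rules do not select $\WW$:
\begin{itemize}
\item ``Normal not parallel to the plane of $R$'' excludes nothing, since two vectors in $\{\pm1\}^3$ never have inner product $0$.
\item ``The rhombus plane has a different slope'' is not automatic, since $R$ lies in a plane of the same four slopes.
\item ``Shifting $c$ or choosing the other sign'' to avoid the cut point would move $\WW$ off the short diagonal.
\end{itemize}

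Second, transversality, which is the whole content of the lemma, is left to an unspecified case analysis. With the right $\WW$ it is short, and your clipped section works. Take $\SSS_0$ to be the graph of $\max(-\min(i+j,2n-i-j),\min(P,\min(i+j,2n-i-j)))$, where $P$ is the affine function whose graph is the plane of $R$.

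Since $\RR\Pi_n\subseteq\{|t|\le i+j\}$, for $c\neq0$ the set $\{|t-c|=i+j\}\cap\RR\Pi_n$ is a single flat piece: $\{t+i+j=c\}$ if $c>0$, or $\{t-i-j=c\}$ if $c<0$. The cut point $(0,0,c)$ is not in $\RR\Pi_n$. So every wavefront is a planar slice and the cut-point condition is vacuous. The plane through the short diagonal has even constant by parity. It meets the interior of $\RR\Pi_n$, because it passes through the centre of $R$ without containing $R$. Hence it is an admissible wavefront.

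The triangles of $\SSS_0$ lie only in the plane of $R$ and in the four planes $t=\pm(i+j)$, $t=\pm(2n-i-j)$. All offsets are even, so the creases are lattice lines along which the triangulations agree; this is also what justifies calling $\SSS_0$ a union of unit triangles. $\WW$ is not the plane of $R$ by construction. It is none of the boundary planes either: each of those bounds a half-space containing $\RR\Pi_n$, while $\WW$ strictly separates the two endpoints of the long diagonal of $R$. Hence $\dim(\WW\cap\SSS_0)\le1$.

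With these repairs you would have a self-contained proof. The paper instead simply defers to Henriques--Kamnitzer and Speyer, noting only that $\{|t|=i+j\}$ is discarded because it misses the interior of $\RR\Pi_n$.
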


    \begin{proof}
        This is implicit in \cite{henriques2006octahedron} and \cite{speyer2026log}. The only caveat is that for wavefronts of type (3), we do not use the wavefront $\{|t| = i+j\}$. This is because this wavefront does not intersect the interior of $\RR\Pi_n$ and hence is not relevant.
    \end{proof}

    \begin{lemma}\label{lem:any-rhombus-int}
        Let $\Tilde{h}:\Pi_n\rightarrow\ZZ$ obeying the octahedron recurrence. Suppose for every wavefront $\WW$, there is a section $\SSS(\WW)$ transverse to $\WW$ such that $\Tilde{h}$ satisfies all rhombus inequalities along $\WW\cap\SSS(\WW)$. Then $\Tilde{h}$ satisfies all rhombus inequalities.
    \end{lemma}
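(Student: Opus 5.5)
The plan follows Henriques--Kamnitzer and Speyer: show that for a fixed wavefront $\WW$, the validity of the rhombus inequalities along $\WW\cap\SSS$ does not depend on which transverse section $\SSS$ we choose. Granting this independence, the lemma is immediate. Let $R$ be any unit rhombus. By Lemma~\ref{lm:tranverse} there is a section $\SSS_0\supseteq R$ and a wavefront $\WW$ transverse to $\SSS_0$, and $R$ lies along $\WW\cap\SSS_0$. By hypothesis, $\Tilde{h}$ satisfies the rhombus inequalities along $\WW\cap\SSS(\WW)$, so independence transfers them to $\WW\cap\SSS_0$. In particular the inequality at $R$ holds.

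To prove independence, I would first show that any two sections transverse to $\WW$ are connected by a sequence of elementary moves through sections that all remain transverse to $\WW$. An elementary move is a single octahedron flip: it replaces the bottom half of a unit octahedron, whose vertices lie in $\Pi_n$, by its top half, or vice versa. This is a combinatorial statement about stepped surfaces in $\RR\Pi_n$. I would prove it in the standard way, by showing that the set of sections transverse to $\WW$ is a distributive-lattice-like family ordered by height, so that one can always flip at a local minimum or maximum of the difference between two such sections without crossing $\WW$ badly. The extra care needed here is for type (3) wavefronts: the cut point $(0,0,c)$ must never lie on the section. This is why the wavefront $\{|t|=i+j\}$ is excluded in Lemma~\ref{lm:tranverse}.

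Next comes a local check. Consider one octahedron flip at an octahedron centered at $(i,j,t)$ whose center lies on, or adjacent to, $\WW$. I claim the octahedron recurrence converts the rhombus inequalities of the old section along $\WW$ into those of the new one. Write $a=\Tilde{h}(i,j,t-1)$ and $a'=\Tilde{h}(i,j,t+1)$, and let $b,b'$ and $c,c'$ be the two opposite pairs of equatorial values, so that $a+a'=\max(b+b',c+c')$. The rhombi along $\WW$ before the flip involve the comparison $a+(\cdot)\ge b+(\cdot)$ or its analogue, and after the flip they involve $a'$. Given $a+a'=\max(b+b',c+c')$, the inequality $b+b'\ge c+c'$ is equivalent to $a+a'=b+b'$, and the relevant neighboring-rhombus inequalities can be rewritten in terms of $a'$ in place of $a$. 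This is exactly the tropical Plücker / Speyer ``inequalities propagate along wavefronts'' computation. For boundary octahedra, where the recurrence has no max (the cases $i=0$ or $j=0$), the relation is linear, and the inequalities along the wavefront transfer trivially.

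The main obstacle is the connectivity step for $\Pi_n$. Its shape differs from Speyer's $T_n$: $\Pi_n$ is a union of a pyramid and an upper region meeting along $i+j=n$. For type (3) wavefronts, which are cones $|t-c|=i+j$ for $c\neq 0$, I must check that flips can be performed avoiding the cut point, and that transversality is preserved across the seam $i+j=n$. I would handle this by reducing to Henriques--Kamnitzer's argument on each piece and gluing along the seam, verifying the seam octahedra by hand.
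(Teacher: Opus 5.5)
Your reduction matches the paper's proof: once you know that, for a fixed wavefront $\WW$, validity of the rhombus inequalities along $\WW\cap\SSS$ does not depend on the transverse section $\SSS$, the lemma follows from Lemma~\ref{lm:tranverse} exactly as you say. The paper does not reprove this independence. It imports it from Henriques--Kamnitzer's Lemma~3.1 (via Speyer's Lemma~3.26), using that $\SSS_{\bbot}$ is transverse to every wavefront.

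The argument you sketch for independence, however, rests on a connectivity claim that is false. Take $\WW=\{t-i+j=c\}$ of type (1), and let $f=t-i+j$, which is even on lattice points and odd at octahedron centers.
\begin{itemize}
\item If the center $(i,j,t)$ has $f=c-1$, the top-half face $\{(i,j,t+1),(i-1,j,t),(i,j+1,t)\}$ lies in $\WW$.
\item If $f=c+1$, a bottom-half face lies in $\WW$.
\item If $|f-c|\ge 3$, the octahedron misses $\WW$, and its moving vertex is not adjacent to $\WW\cap\SSS$.
\end{itemize}
So an octahedron flip that touches $\WW$ always goes between a transverse and a non-transverse section. The same holds for type (2) and for each sheet of type (3). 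Hence transversality-preserving flips never move the path $\WW\cap\SSS$.

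Yet transverse sections generally have different paths. For $c=0$, $\WW\cap\SSS_{\bbot}$ runs along $i=0$ and then $j=n$. For the section $t=\min(i+j,2n-i-j)$ underlying $S^{\ttop}_{\skewhive}$, the path runs along $j=0$ and then $i=n$. So two transverse sections are not joined through transverse sections, and your local check (a flip adjacent to $\WW$ with transverse sections on both sides) never applies.

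The missing idea is that you must pass through non-transverse sections, and that tetrahedron (diagonal) flips must be among the moves. This is why the paper stresses that a section is not determined by $\SSS\cap\ZZ^3$. The basic step is a composite move. With center on $f=c-1$, put $T=(i,j,t+1)$, $B=(i,j,t-1)$, $P_1=(i+1,j,t)$, $Q_1=(i-1,j,t)$, $Q_2=(i,j+1,t)$, $P_2=(i,j-1,t)$, $V=(i-1,j+1,t+1)$.
\begin{itemize}
\item Flipping the octahedron creates the face $TQ_1Q_2\subset\WW$.
\item Flipping the tetrahedron $TQ_1Q_2V$ then removes it, replacing the path edge $Q_1Q_2$ by $Q_1T\cup TQ_2$.
\end{itemize}
Using $\Tilde h(T)+\Tilde h(B)=\max(\Tilde h(Q_1)+\Tilde h(P_1),\Tilde h(Q_2)+\Tilde h(P_2))$, the old inequality $\Tilde h(Q_1)+\Tilde h(Q_2)\ge\Tilde h(B)+\Tilde h(V)$ is equivalent to the pair $\Tilde h(T)+\Tilde h(Q_1)\ge\Tilde h(P_2)+\Tilde h(V)$ and $\Tilde h(T)+\Tilde h(Q_2)\ge\Tilde h(P_1)+\Tilde h(V)$. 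This composite step, not a single flip, is where the max in the recurrence is used.

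What remains is to show that any two transverse sections are joined by such composite moves together with moves that do not affect $\WW\cap\SSS$ or the triangles adjacent to its edges. That is the content of the Henriques--Kamnitzer lemma the paper cites, and your height-lattice sketch does not supply it.

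Two minor points: octahedron centers never lie on a wavefront, and $\{|t|=i+j\}$ is excluded because it lies on the boundary of $\RR\Pi_n$, not because of the cut point.
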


    \begin{proof}
        This is verbatim to \cite[Lemma 3.26]{speyer2026log}, where Speyer explained how his Lemma 3.26 follows from \cite[Lemma 3.1]{henriques2006octahedron}. We include the explanation here for completeness.

        Since the section $\SSS_{\bbot}$ intersects tranversely to every wavefront $\WW$, it follows from \cite[Lemma 3.1]{henriques2006octahedron} that if two sections $\SSS$ and $\SSS'$ are tranverse to $\WW$, and $\Tilde{h}$ satisfies rhombus inequalities along $\WW\cap\SSS$, then $\Tilde{h}$ also satisfies rhombus inequalities along $\WW\cap\SSS'$.

        For any rhombus $R$, Lemma \ref{lm:tranverse} says that there is a section $\SSS_0$ containing $R$ and a wavefront $\WW$ transverse to $\SSS_0$. Thus, if there is a section $\SSS(\WW)$ as above, then $\Tilde{h}$ satisfies all rhombus inequalities along $\WW\cap \SSS_0$. In particular, $\Tilde{h}$ satisfies the rhombus inequality at $R$.
    \end{proof}

    Now we are ready to prove Theorem \ref{thm:skew-hive-skep}.

    \begin{proof}[Proof of Theorem \ref{thm:skew-hive-skep}]
        The proof is similar to that of 
        \cite[Theorem 3.15]{speyer2026log}.
    
        \textbf{The easy direction:} It is easy to see that (5) implies (1)-(4) since all inequalities for skew hives and skew skeps are rhombus inequalities.

        \textbf{The other direction:} We first show that (4) implies (5). The argument for (1) implies (5) is similar. To show (4) implies (5), recall that for every wavefront $\WW$, the section $\SSS_{\bbot}$ intersect $\WW$ transversely. Furthermore, all rhombus inequalities on along $\WW\cap \SSS_{\bbot}$ are hive inequalities, so $\SSS_{\bbot}$ meets the condition of Lemma \ref{lem:any-rhombus-int}.

        Now we show (3) implies (5); the argument for (2) implies (5) is similar. We use the sections $\SSS_1$ and $\SSS_2$ similar to \cite[Definition 3.22]{speyer2026log}. These sections can be seen in Figures \ref{subfig:S1-triangles} and \ref{subfig:S2-triangles}. We leave the explicit descriptions to the readers. One can check that $\SSS_1$ intersects wavefronts of type (1) and (2) transversely (Figure \ref{fig:intersect-S1}), and $\SSS_2$ intersects wavefronts of type (3) transversely (Figure \ref{fig:intersect-S2}). Thus, $\SSS_1$ and $\SSS_2$ combined meet the condition of Lemma \ref{lem:any-rhombus-int}.

        \begin{figure}[h!]
         \centering
            \begin{subfigure}[b]{0.4\textwidth}
                \centering
                \includegraphics[scale = 0.9]{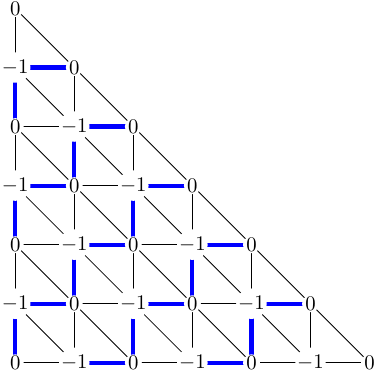}
                \caption{Type (1)}
                \label{intersect-S1-type1}
            \end{subfigure}
         \quad\quad
            \begin{subfigure}[b]{0.4\textwidth}
                \centering
                \includegraphics[scale = 0.9]{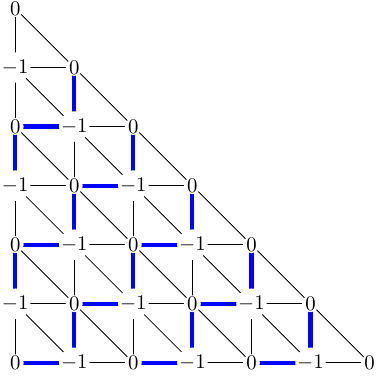}
                \caption{Type (2)}
                \label{intersect-S1-type2}
            \end{subfigure}

            \caption{Intersection of $\SSS_{1}$ with the wavefronts of types (1) and (2)}
            \label{fig:intersect-S1}
        \end{figure}

        \begin{figure}[h!]
         \centering
            \begin{subfigure}[b]{0.4\textwidth}
                \centering
                \includegraphics[scale = 0.9]{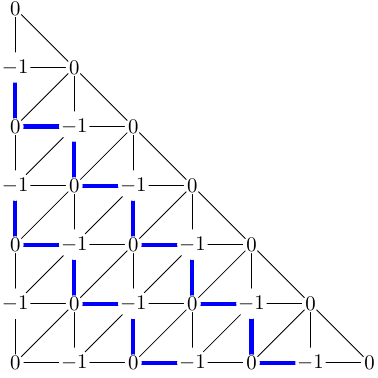}
                \caption{Type (3) positive $c$}
                \label{intersect-S2-type3pos}
            \end{subfigure}
         \quad\quad
            \begin{subfigure}[b]{0.4\textwidth}
                \centering
                \includegraphics[scale = 0.9]{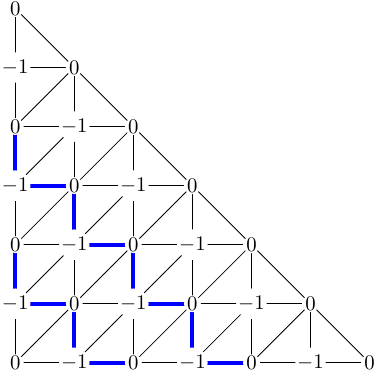}
                \caption{Type (3) negative $c$}
                \label{intersect-S2-type3neg}
            \end{subfigure}

            \caption{Intersection of $\SSS_{2}$ with the wavefronts of type (3)}
            \label{fig:intersect-S2}
        \end{figure}

        \textbf{The boundary:} Let $h_1, h_2,h_3,h_4$ be $\Tilde{h}|_{S^{\ttop}_{\skewhive}},\Tilde{h}|_{S^{\ttop}_{\skewskep}},\Tilde{h}|_{S^{\bbot}_{\skewskep}},\Tilde{h}|_{S^{\bbot}_{\skewhive}}$, respectively. Then,
        \[
        h_1^{\nwarrow} = h_2^{\nwarrow} = h_3^{\nwarrow} = h_4^{\nwarrow}.
        \]
        By similar analysis of the octahedron recurrence as \cite[Figure 3.1]{speyer2026log}, one can show that
        \[
        h_1^{\uparrow} = (\nu,\lambda) \Longleftrightarrow h_2^{\uparrow} = (\nu_1,\lambda_1,\ldots,\nu_n,\lambda_n) \Longleftrightarrow h_3^{\uparrow} = (\lambda_1,\nu_1,\ldots,\lambda_n,\nu_n) \Longleftrightarrow h_4^{\uparrow} = (\lambda,\nu),
        \]
        and
        \[
        h_1^{\rightarrow} = (\rho,\mu) \Longleftrightarrow h_2^{\rightarrow} = (\rho_1,\mu_1,\ldots,\rho_n,\mu_n) \Longleftrightarrow h_3^{\rightarrow} = (\mu_1,\rho_1,\ldots,\mu_n,\rho_n) \Longleftrightarrow h_4^{\rightarrow} = (\mu,\rho).
        \]
    \end{proof}

\section{Skew Schur log-concavity}
\subsection{$L$-convexity}
Murota \cite{M03} developed two dual notions of discrete convexity, which he termed $M$-convexity and $L$-convexity. Here we recall some facts about $L$-convexity.
\begin{definition}
    For $u, v\in \Z^n,$ we define \[L(u, v) := \{z\in \Z^n\mid \min(u_i - u_j, v_i - v_j)\le z_i - z_j\le \min(u_i - u_j, v_i - v_j)\text{ for all }1\le i < j\le n\}.\]
\end{definition}
This coincides with the notation $P_{u,v}$ as defined in the introduction.

\begin{definition}[{c.f. \cite[Definition 1.2 and 4.5]{speyer2026log}}]
    A set $S\sbs \Z^n$ is $L$-convex if \begin{itemize}
        \item $u\in S\implies u\pm 1^n\in S.$
        \item $u, v\in S\implies L(u, v)\sbs S.$
    \end{itemize}
\end{definition}

While this not the original definion introduced by Murota {\cite[Definition 5.1]{M03}}, Speyer have shown that it is in fact equivalent \cite[Theorem 4.3]{speyer2026log}. We note that intersections of $L$-convex sets are $L$-convex, and $L(u,v)$ is the $L$-convex hull of $u$ and $v$ in the sense that it is the minimal $L$-convex set containing $u$ and $v.$ 

\begin{definition}[{c.f. \cite[Definitions 1.3 and 4.5]{speyer2026log}}]
    A function $f\colon \Z^n\to \RR_{\ge 0}$ is $L$-log-concave if \begin{itemize}
        \item There exists $c\in \RR_{>0}$ such that $f(u+1^n) = cf(u)$ for all $u.$
        \item For $u, v, u', v'$ such that $u', v'\in L(u, v)$ and $u+v = u'+v'$, we have $f(u')f(v')\ge f(u)f(v).$
    \end{itemize}
\end{definition}

The definition above is also not Murota's original definition; but it is equivalent by 
\cite[Theorem 4.4]{speyer2026log}. Note that a set $S$ is $L$-convex if and only if $1_S$ (the indicator function of $S$) is $L$-log-concave.

Since $L$ convex set is invariant under translation by $1^n$, it is natural to consider the image of the $L$-convex set in $\Z^n/1^n$. One way to do this is to intersect the $L$-convex set with the hyperplane $\{(x_1, \dots, x_n)\mid x_n = 0\}$, which gives a set in $\Z^{n-1}.$ Such a set is said to be $L^\natural$-convex by Murota \cite{M03}. It is clear that $S\sbs \Z^{n-1}$ is $L^\natural$ convex if and only if $\{(s, 0)+t1^n\mid s\in S, t\in \Z\}\sbs \Z^n$ is $L$-convex. $L^\natural$ convex set coincides with what is known as an alcolved polyhedra in the sense of \cite{lam2007alcoved,lam2018alcoved}. Similarly, a function $f\colon \Z^{n-1}\to \RR_{\ge 0}$ is said to be $L^\natural$-log-concave if it is $L$-log-concave after extending it to $\Z^n$ by requiring $f(u+1^n) = cf(u)$ for some $c\in \RR_{>0}$ (in fact, this property does not depends on the choice of $c$.) 

One can intrinsically characterize $L^\natural$-convex set and $L^\natural$-log-concave function as follows.
\begin{definition}
    For $u, v\in \Z^m,$ we define \begin{align}L^{\natural}(u, v) &:= \{z\in \Z^m\mid (z, 0)\in L((u, 0), (v, 0))\text{ in }\Z^{m+1}\}\notag \\
    &= \{z\in \Z^m\mid z\in L(u, v), \min(u_i, v_i)\le z_i \le \max(u_i, v_i)\text{ for all }i\}.\end{align}
\end{definition}
\begin{definition}
    A set $S\sbs \Z^m$ is said to be  $L^\natural$-convex if for all $u, v\in S,$ we have $L^\natural(u,v)\sbs S.$
\end{definition}
\begin{definition}
    A function $f\colon \Z^m\to \RR_{\ge0}$ is said to be  $L^\natural$-log-concave if for all $u, v, u', v'$ such that $u', v'\in L^{\natural}(u, v)$ and $u' + v' = u+v$, we have $f(u')f(v')\ge f(u)f(v).$  
\end{definition}

While $L$ and $L^\natural$ are conceptually the same notion, it is worth noting that $L$ is strictly stronger than $L^\natural$ ($L$-convex set are $L^\natural$-convex, and $L$-log-concave function are $L^\natural$-log-concave). For instance, $L^\natural(u, v)$ is, aside from trivial cases, strictly smaller than $L(u, v).$ However, for elements of $\Z^n$ ending in $0$, we clearly have the following.
\begin{lemma}\label{lm:ends0lnat}
    For $a, b, c\in \Z^n$ ending in $0$, one has
    $a\in L(b, c)$ if and only if $a\in L^\natural(b, c).$
\end{lemma}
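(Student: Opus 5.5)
The plan is to compare the two descriptions directly. By the definition (equation above), $a\in L^\natural(b,c)$ means $a\in L(b,c)$ together with the extra coordinatewise bounds $\min(b_i,c_i)\le a_i\le\max(b_i,c_i)$ for all $i$. So the direction ``$L^\natural\Rightarrow L$'' is immediate. The only content is the converse: if $a\in L(b,c)$ and $a_n=b_n=c_n=0$, then the coordinatewise bounds hold automatically.

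For the converse, I will use the pairwise constraints with $j=n$. Here I read the definition of $L(u,v)$ in the intended way, with a lower bound $\min$ and an upper bound $\max$; the displayed definition has $\min$ twice, which is evidently a typo. Membership $a\in L(b,c)$ gives, for each $i<n$,
\[
\min(b_i-b_n,\,c_i-c_n)\le a_i-a_n\le \max(b_i-b_n,\,c_i-c_n).
\]
Since $a_n=b_n=c_n=0$, this reads $\min(b_i,c_i)\le a_i\le\max(b_i,c_i)$ for $i<n$. For $i=n$ the bound $0\le 0\le 0$ is trivial. Thus all coordinatewise bounds hold and $a\in L^\natural(b,c)$.

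Alternatively, one can argue from the first line of the definition of $L^\natural$, namely $(a,0)\in L((b,0),(c,0))$ in $\Z^{n+1}$. Appending a coordinate equal to $0$ adds only the constraints comparing index $i$ with index $n+1$, and these are exactly the bounds above. Moreover, the pair $(n,n+1)$ gives the same constraint as the coordinate $n$ itself. I expect no real obstacle here; the only point needing care is the $\min/\max$ typo in the definition of $L(u,v)$.
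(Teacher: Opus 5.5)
Your proof is correct and is the same argument the paper leaves implicit with the word ``clearly'': because $a_n=b_n=c_n=0$, the pair $(i,n)$ in the definition of $L(b,c)$ gives exactly the coordinatewise bounds $\min(b_i,c_i)\le a_i\le\max(b_i,c_i)$, which are all that separates $L^\natural(b,c)$ from $L(b,c)$. You are also right that the upper bound in the displayed definition of $L(u,v)$ should be a $\max$.
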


We will use the following theorem from \cite{speyer2026log} which says that marginals of $L$-convex sets are $L$-log-concave.
\begin{thm}[{\cite[Theorem 5.1]{speyer2026log}}]\label{thm:marginall}
    Let $K\sbs \Z^{m+n}$ be an $L$-convex set. For $x\in \Z^m$, define
    \[E_K(x) := \abs*{\{y\in \Z^n\colon (x,y)\in K\}}.\]
    Then $E_K$ is $L$-log-concave.
\end{thm}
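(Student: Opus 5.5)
The first condition is immediate. Since $K$ is $L$-convex, it is invariant under translation by $1^{m+n}$. Hence $(x+1^m,y)\in K$ if and only if $(x,y-1^n)\in K$, so $y\mapsto y-1^n$ is a bijection between the fibers over $x+1^m$ and over $x$. This gives $E_K(x+1^m)=E_K(x)$, so we may take $c=1$. The real content is the second condition: for $u',v'\in L(u,v)$ with $u'+v'=u+v$, we must show $E_K(u')E_K(v')\ge E_K(u)E_K(v)$. My plan is to obtain this from the Ahlswede--Daykin four functions theorem on the distributive lattice $\Z^n$, using that $L$-convex sets are closed under suitable lattice operations.

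\textbf{Step 1 (lattice closure).} For $p,q\in K$ and $k\in\Z$, I will check that $p\vee(q+k1^{m+n})-k'1^{m+n}$ and $p\wedge(q+k1^{m+n})-k''1^{m+n}$ lie in $L(p,q)$ for suitable shifts, and in particular in $K$. Here $\vee,\wedge$ are coordinatewise max and min. For $k=0$ this is the standard fact that $p\vee q,\ p\wedge q\in L(p,q)$. Indeed, $(p\vee q)_i-(p\vee q)_j$ always lies between $\min(p_i-p_j,q_i-q_j)$ and $\max(p_i-p_j,q_i-q_j)$. I read the definition of $L(u,v)$ with a $\max$ on the right, as the displayed definition evidently intends. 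The general case follows by translation invariance.

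\textbf{Step 2 (the join/meet case).} Suppose $u'=u\vee(v+k1^m)-a1^m$ and $v'=u\wedge(v+k1^m)+(a-k)1^m$ for some integers $k,a$. These satisfy $u'+v'=u+v$. Define indicator functions on $\Z^n$ by
\begin{align*}
f_1(y)&=1[(u,y)\in K], & f_2(z)&=1[(v+k1^m,z)\in K],\\
f_3(w)&=1[(u\vee(v+k1^m),w)\in K], & f_4(w)&=1[(u\wedge(v+k1^m),w)\in K].
\end{align*}
By Step 1, $f_1(y)f_2(z)\le f_3(y\vee z)f_4(y\wedge z)$. Ahlswede--Daykin on the finite sublattices spanned by the (finite) fibers then gives $(\sum f_1)(\sum f_2)\le(\sum f_3)(\sum f_4)$. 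Using the $1^{m+n}$-invariance from the first paragraph to absorb the shifts, this is exactly $E_K(u)E_K(v)\le E_K(u')E_K(v')$.

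\textbf{Step 3 (reduction of a general pair).} This is the step I expect to be the main obstacle: showing that every pair $u',v'\in L(u,v)$ with $u'+v'=u+v$ is reached from $(u,v)$ by a finite chain of the moves in Step 2. Each move must replace the current pair by another pair in the $L$-hull with the same sum. Since each move does not decrease $E_K(\cdot)E_K(\cdot)$, chaining them proves the claim.

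I would first try an induction on $\sum_i|u_i-u'_i|$. Write $d=u'-u=v-v'$. Take $k$ to be an extreme value among the coordinate differences, and apply a join/meet against $v+k1^m$ chosen so that it moves exactly the coordinates where $d$ attains its maximum. The defining inequalities of $L(u,v)$ should guarantee that this partial move stays inside $L(u,v)$ and strictly decreases the distance to $(u',v')$. If that bookkeeping fails, the fallback is to apply Ahlswede--Daykin directly to the ordered level sets of $t\mapsto u_i-v_i$. This amounts to using the known description of $L(u,v)$ as the pairs obtained by thresholding $v-u$, and sidesteps the need for an explicit chain.
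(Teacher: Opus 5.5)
Your first paragraph and Steps 1--2 are correct. They show that $E_K$ is invariant under $1^m$ and log-supermodular, $E_K(u)E_K(v)\le E_K(u\vee(v+k1^m))\,E_K(u\wedge(v+k1^m))$, which is $L$-log-concavity in Murota's sense. The gap is Step 3, and it is not a bookkeeping issue: the claim that every admissible $(u',v')$ can be reached from $(u,v)$ by a chain of such moves is false.

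A move replaces the difference $e=q-p$ of the current pair by $-|e+k|-k+2a$ coordinatewise, or by $-e$ if you swap roles. So along any chain $q-p=G(v-u)$ coordinatewise, where $G\colon\mathbb{Z}\to\mathbb{Z}$ is a composition of folds $t\mapsto\ell-|t-\ell|$ and isometries of $\mathbb{Z}$; in particular $|G(t+1)-G(t)|=1$.

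Now take $u=(0,0,0,0)$, $v=(0,2,3,5)$, $u'=(0,2,2,4)$, $v'=(0,0,1,1)$. Then $u',v'\in L(u,v)$ and $u'+v'=u+v$. But ending at $(u'+c1^4,v'-c1^4)$ or its swap forces $G$ on $\{0,\dots,5\}$ to be $\pm(0,-1,-2,-1,-2,-3)$ plus a constant. This sequence is non-monotone, and its maximum and minimum occur only at the endpoints. Peeling off the last factor preserves this shape:
\begin{itemize}
\item an isometry clearly does;
\item if $G=F_\ell\circ G'$ with $F_\ell$ a fold, then $\ell\ge\max G$. Since $G<\ell$ away from one endpoint, a unit-step $G'$ cannot switch between the branches $G$ and $2\ell-G$, so $G'\in\{G,2\ell-G\}$ on $\{0,\dots,5\}$.
\end{itemize}
Peeling off every factor, you would end at the identity, which is monotone. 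So no chain exists, whatever extreme $k$ you pick. Your fallback also rests on a false description: $L(u,v)$ consists of the points $u+\phi(v-u)$ with $\phi$ nondecreasing and $1$-Lipschitz, i.e.\ sums of \emph{several} thresholds of $v-u$. The single-threshold points are exactly the one-move points.

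What is missing is an argument with cancellation rather than a chain. Let $d=v-u$ and $\Theta=\{\min d+1,\dots,\max d\}$. Let $\chi_\tau\in\{0,1\}^m$ have $i$-th entry $1$ iff $d_i\ge\tau$, and for $S\subseteq\Theta$ set $x_S=u+\sum_{\tau\in S}\chi_\tau$ and $F(S)=E_K(x_S)$. Up to multiples of $1^m$, which $E_K$ ignores, $\{u,v\}=\{x_\emptyset,x_\Theta\}$ and $\{u',v'\}=\{x_S,x_{\Theta\setminus S}\}$ for some $S$.

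For $\sigma<\tau$ in $\Theta\setminus S$, your Step 2 move with $k=-1$, $a=0$ applied to $(x_S,x_{S\cup\{\sigma,\tau\}})$ produces exactly $(x_{S\cup\{\tau\}},x_{S\cup\{\sigma\}})$. Hence $F(S)F(S\cup\{\sigma,\tau\})\le F(S\cup\{\sigma\})F(S\cup\{\tau\})$, so $F$ is locally log-\emph{sub}modular on the Boolean lattice $2^\Theta$.

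If $E_K(u)E_K(v)>0$, then $F>0$ throughout. Indeed, the support of $E_K$ is a coordinate projection of $K$, hence $L$-convex, and every $x_S$ lies in $L(u,v)$. So the local inequality telescopes to global log-submodularity, and applying it to $S$ and $\Theta\setminus S$ gives $F(\emptyset)F(\Theta)\le F(S)F(\Theta\setminus S)$, which is the claim.

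This is precisely the content of the equivalence between Murota's definition and the $L(u,v)$-definition (Speyer's Theorem 4.4, quoted in the paper). Granting that equivalence, your Steps 1--2 already finish the proof. The paper itself gives no argument for this theorem and simply cites Speyer.
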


\subsection{$L$-log-concavity of $(\lambda, \mu)\mapsto s_{\lambda/\mu}.$}

Speyer \cite{speyer2026log} proved the following.
\begin{thm}[{\cite[Main Theorem]{speyer2026log}}]\label{thm:speyermain}
    Let $\lambda, \mu, \ti\lambda, \ti\mu, \nu\in \Z^n$ be partitions such that $\lambda + \mu = \ti\lambda + \ti\mu$ and $\ti\lambda, \ti\mu\in L(\lambda, \mu).$
     Then $c_{\lambda,\mu}^\nu\le c_{\ti\lambda,\ti\mu}^{\nu}.$
\end{thm}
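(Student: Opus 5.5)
The plan is to follow Speyer's strategy: realize $c^\nu_{\lambda,\mu}$ as a marginal of an $L$-convex set of lattice points, apply Theorem~\ref{thm:marginall}, and then use the symmetry $c^\nu_{\lambda,\mu}=c^\nu_{\mu,\lambda}$ to turn $L$-log-concavity into the desired inequality. The natural model is $\Skep_{\lambda,\mu}^\nu$. There the boundary read along the left and lower edges is the interleaved vector $x(\lambda,\mu):=(\lambda_1,\mu_1,\ldots,\lambda_n,\mu_n)$, while $\nu$ is carried by $h^\nwarrow$. This interleaving, rather than the concatenation $(\lambda,\mu)$ used by hives, is what makes the $L$-convex structure compatible with the hypothesis $\ti\lambda,\ti\mu\in L(\lambda,\mu)$.

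\textbf{Step 1 ($L$-convexity of skeps).} Fix $\nu$. Change variables from $h$ to a set of "edge" coordinates, for instance partial differences of $h$ along lattice directions, normalized so that the boundary coordinates are exactly the entries of $x(\lambda,\mu)$. The goal of the change of variables is that every skep inequality in Definition~\ref{def:skeps} becomes a difference constraint $z_a-z_b\le c$. I expect this to work because skeps use rhombi in only two of the three orientations, the third (hive-type) rhombus being replaced by its mirror image. In a unimodular coordinate system, such rhombus inequalities are exactly the constraints defining an $L^\natural$-convex set. The remaining requirements should also fit: fixing $h^\nwarrow=\nu$ and the boundary condition $h_{11}\ge h_{00}$ are difference constraints, and the condition that the boundary be a partition is either implied by the skep inequalities or is itself a difference constraint. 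Once this is arranged, the set $K_\nu$ of pairs (boundary, interior) is an intersection of $L^\natural$-convex sets, hence $L^\natural$-convex, and after homogenizing as in Lemma~\ref{lm:ends0lnat} it is $L$-convex.

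\textbf{Step 2 (marginal).} By Speyer's skep rule, $f(x):=|\Skep^\nu_{\lambda,\mu}|=c^\nu_{\lambda,\mu}$, where $x=x(\lambda,\mu)$, is the marginal $E_{K_\nu}$ of $K_\nu$ in the boundary variables. Theorem~\ref{thm:marginall} then shows that $f$ is $L$-log-concave on $\ZZ^{2n}$.

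\textbf{Step 3 (symmetrization).} Put $u=x(\lambda,\mu)$, $v=x(\mu,\lambda)$, $u'=x(\ti\lambda,\ti\mu)$ and $v'=x(\ti\mu,\ti\lambda)$. Then $u+v=u'+v'$ holds coordinatewise because $\lambda+\mu=\ti\lambda+\ti\mu$. It remains to check $u',v'\in L(u,v)$, i.e.\ that every difference of two coordinates of $u'$ lies between the corresponding differences for $u$ and $v$. Pure differences $\ti\lambda_a-\ti\lambda_b$ are bounded by the hypothesis $\ti\lambda\in L(\lambda,\mu)$. For a mixed difference, write $\ti\mu_b=\lambda_b+\mu_b-\ti\lambda_b$, so that
\[
\ti\lambda_a-\ti\mu_b=(\ti\lambda_a+\ti\lambda_b)-(\lambda_b+\mu_b).
\]
The required bounds should then follow by combining the constraints on $\ti\lambda_a-\ti\lambda_b$ with the coordinate bounds $\min(\lambda_a,\mu_a)\le\ti\lambda_a\le\max(\lambda_a,\mu_a)$; here we work with partitions ending in $0$ and invoke Lemma~\ref{lm:ends0lnat}. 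Granting $u',v'\in L(u,v)$, log-concavity gives $f(u)f(v)\le f(u')f(v')$. Since $f(u)=f(v)=c^\nu_{\lambda,\mu}$ and $f(u')=f(v')=c^\nu_{\ti\lambda,\ti\mu}$, this reads $(c^\nu_{\lambda,\mu})^2\le(c^\nu_{\ti\lambda,\ti\mu})^2$, which is the theorem.

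\textbf{Main obstacle.} The hard step is Step 1: finding coordinates in which all four families of skep inequalities simultaneously become difference constraints. The first two families telescope easily in row-difference variables, but the last family relates points in different rows. My first attempt would be the coordinates $h_{i,j}$ shifted by a suitable linear function of $(i,j)$, viewed on the sheared lattice in which the two allowed rhombus orientations become the standard $L^\natural$ submodularity and diagonal constraints. I would check each rhombus type by hand on $\Delta_2$ before generalizing.
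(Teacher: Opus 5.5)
Your proposal has a genuine gap at Steps 1--2. This is exactly the obstacle that Speyer's argument is built to avoid; the paper quotes that argument here and replicates it for skew skeps in its proof of Theorem~\ref{thm:skew-schur-log}.

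\textbf{Why Steps 1--3 fail.} For fixed $\nu$, the function $f(x(\lambda,\mu))=c^\nu_{\lambda,\mu}$ is supported on the hyperplane $|\lambda|+|\mu|=|\nu|$. So it is not $L$-log-concave on $\ZZ^{2n}$, because $f(u+1^{2n})=0$ whenever $f(u)>0$. It is not even $L^\natural$-log-concave. Take $n=2$, $\nu=(2,1)$, $u=x((1,0),(1,1))=(1,1,0,1)$ and $v=x((1,1),(1,0))=(1,1,1,0)$. Then $f(u)f(v)=1$. But $u\wedge v=(1,1,0,0)$ and $u\vee v=(1,1,1,1)$ lie in $L^\natural(u,v)$, sum to $u+v$, and have the wrong total size, so $f(u\wedge v)f(u\vee v)=0$.

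Hence, by the $L^\natural$ form of Theorem~\ref{thm:marginall}, no change of variables can make the full skep set $K_\nu$ $L$- or $L^\natural$-convex with $x(\lambda,\mu)$ as boundary coordinates. The four-term inequalities $h_a+h_b\ge h_c+h_d$ cannot all be turned into two-term difference constraints.

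Step 3 fails independently. The partitions $\lambda=(2,0,0)$, $\mu=(1,1,0)$, $\ti\lambda=(2,1,0)$, $\ti\mu=(1,0,0)$ satisfy all the hypotheses, including the coordinate bounds. Yet $\ti\lambda_1-\ti\mu_2=2$, while $\lambda_1-\mu_2=\mu_1-\lambda_2=1$. So $x(\ti\lambda,\ti\mu)\notin L(x(\lambda,\mu),x(\mu,\lambda))$.

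\textbf{The missing idea: freeze half the entries instead of changing coordinates.} Fix the restriction $g^+$ of the skep to the positions with $i+j\equiv n \pmod 2$, i.e.\ the $t=0$ layer of $T_n$.

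Each of the four rhombus inequalities in Definition~\ref{def:skeps} has exactly one entry of each parity on each side. Once $g^+$ is fixed, each therefore reads $g_a-g_b\ge c$ for two free entries. The condition $h_{11}\ge h_{00}$ is either a difference constraint or a condition on $g^+$, and $h^\nwarrow=\nu$ depends on $g^+$ alone. So $K(g^+)$ is $L$-convex as it stands.

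Freezing $g^+$ also freezes the boundary values reached after an even number of steps, i.e.\ it freezes $\alpha=\lambda+\mu$. The only free boundary data is then $\lambda$. The map $\lambda\mapsto|\Skep^\nu_{\lambda,\alpha-\lambda}(g^+)|=E_{K(g^+)}(\lambda+\pi)$, with $\pi$ the fixed vector of partial sums of $\alpha$, is $L$-log-concave on $\ZZ^n$. The hypothesis $\ti\lambda,\ti\mu\in L(\lambda,\mu)$ is then used verbatim, with no interleaved membership to verify.

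Finally, the symmetry must be applied slice by slice. The single layer of the octahedron recurrence from $S^{\bbot}_{\skep}$ to $S^{\ttop}_{\skep}$ (Theorem~\ref{thm:speyer-hive-skep}) fixes the $t=0$ entries. Hence $|\Skep^\nu_{\lambda,\mu}(g^+)|=|\Skep^\nu_{\mu,\lambda}(g^+)|$, which yields $|\Skep^\nu_{\lambda,\mu}(g^+)|\le|\Skep^\nu_{\ti\lambda,\ti\mu}(g^+)|$. You sum over $g^+$ only at the end. Invoking $c^\nu_{\lambda,\mu}=c^\nu_{\mu,\lambda}$ after summing would not suffice, since sums of $L$-log-concave functions need not be $L$-log-concave (compare the example in the introduction).
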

\begin{cor}\label{cor:lpp}
    Let $\lambda, \mu, \ti\lambda, \ti\mu, \in \Z^n$ be partitions  such that $\lambda + \mu = \ti\lambda + \ti\mu$ and 
    $\ti\lambda, \ti\mu\in \L^\natural(\lambda, \mu).$ Then $s_{\lambda}s_{\mu}\le_{s}s_{\ti\lambda}s_{\ti\mu}.$
\end{cor}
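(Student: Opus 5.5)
We deduce Corollary~\ref{cor:lpp} from Theorem~\ref{thm:speyermain} by comparing coefficients. By definition, $s_{\lambda}s_{\mu}\le_s s_{\ti\lambda}s_{\ti\mu}$ means that $c^{\nu}_{\lambda,\mu}\le c^{\nu}_{\ti\lambda,\ti\mu}$ for every partition $\nu$. So it suffices to check that the hypotheses of Theorem~\ref{thm:speyermain} hold, i.e.\ that $\ti\lambda,\ti\mu\in L(\lambda,\mu)$. The only gap is that the corollary assumes membership in the smaller set $L^\natural(\lambda,\mu)$ rather than $L(\lambda,\mu)$. Since $L^\natural(\lambda,\mu)\subseteq L(\lambda,\mu)$ by the intrinsic description of $L^\natural$, the hypothesis gives what we need directly.

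The one subtlety is the number of variables. Theorem~\ref{thm:speyermain} is stated for $\mathfrak{sl}_n$-weights, which are partitions in $\Z^n$ with $n$th part zero, while here the partitions have $n$ parts that need not end in $0$. I handle this by padding every partition with a trailing zero and working in $\Z^{n+1}$. By the first description of $L^\natural$,
\[ z\in L^\natural(u,v)\iff (z,0)\in L((u,0),(v,0)), \]
so $\ti\lambda,\ti\mu\in L^\natural(\lambda,\mu)$ gives $(\ti\lambda,0),(\ti\mu,0)\in L((\lambda,0),(\mu,0))$. Lemma~\ref{lm:ends0lnat} records the same equivalence for vectors ending in $0$.

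The sum condition is clearly preserved: $(\lambda,0)+(\mu,0)=(\ti\lambda,0)+(\ti\mu,0)$. Now apply Theorem~\ref{thm:speyermain} with $n+1$ in place of $n$. For every partition $\nu$ with at most $n+1$ parts, this gives $c^{\nu}_{\lambda,\mu}\le c^{\nu}_{\ti\lambda,\ti\mu}$.

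It remains to cover partitions $\nu$ with more than $n+1$ parts. Littlewood--Richardson coefficients do not change when trailing zeros are appended, so we can pad everything by enough zeros to reach length $N\ge \ell(\nu)$. The padded vectors still satisfy the $L$-membership condition, because
\[ (z,0)\in L((u,0),(v,0)) \implies (z,0,0)\in L((u,0,0),(v,0,0)): \]
the new constraints on differences $z_i-0$ are identical to the already-satisfied constraints involving the $(n+1)$st coordinate. Alternatively, one can note that $\ell(\nu)\le\ell(\lambda)+\ell(\mu)\le 2n$ whenever $c^\nu_{\lambda,\mu}\neq 0$, so padding to length $2n$ suffices.

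The main obstacle is only this bookkeeping between $L$ and $L^\natural$ and between $n$, $n+1$ and larger lengths. It is exactly what the definition of $L^\natural$ via appending a $0$ is designed to handle. Summing $c^{\nu}_{\lambda,\mu}\le c^{\nu}_{\ti\lambda,\ti\mu}$ over all $\nu$ gives the Schur positivity of $s_{\ti\lambda}s_{\ti\mu}-s_\lambda s_\mu$.
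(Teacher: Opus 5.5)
Your argument is correct and is essentially the paper's. The paper gives no separate proof; the remark following the corollary explains that one applies Theorem~\ref{thm:speyermain} after padding the partitions with zeros, because membership in $L$ after appending zeros is exactly membership in $L^\natural$. That is what you do, with the extra check that further padding preserves $L$-membership (or, alternatively, that $\ell(\nu)\le 2n$ suffices).

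One caution: your opening claim that it suffices to know $\widetilde\lambda,\widetilde\mu\in L(\lambda,\mu)$ in $\Z^n$ is false on its own, and the paper notes the corollary fails with $L$ in place of $L^\natural$. So the padding step is the real content of the proof, not just bookkeeping.
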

\begin{remark}
    While Theorem~\ref{thm:speyermain} allows $\ti\lambda, \ti\mu$ to be in $L(\lambda, \mu)$, in order to show $s_{\lambda}s_{\mu}\le_{s}s_{\ti\lambda}s_{\ti\mu}$, one needs $c_{\lambda,\mu}^\nu\le c_{\ti\lambda,\ti\mu}^{\nu}$ for all $\nu$, possibly of for $\nu$ of length longer than $n.$ To deal with this, one needs to pad partitions with $0's$ at the end. The condition of being in $L(\lambda, \mu)$ after padding by $0'$s is exactly the condition of   $L^\natural(\lambda, \mu).$ In fact, Corollary~\ref{cor:lpp} is false if $L^\natural(\lambda, \mu)$ is replaced by $L(\lambda, \mu).$ 
\end{remark}
As observed by Speyer \cite{speyer2026log}, the Lam--Postnikov--Pylyavskyy conjecture should be seen as saying that the function $\lambda\mapsto s_\lambda$ is  $L^\natural$-log-concave. Here we prove a generalization of the Lam--Postnikov--Pylyavskyy conjecture to skew Schur function.
\begin{thm}
    Let $\lambda,\mu, \nu,\rho,  \ti\lambda,\ti\mu,\ti\nu,\ti\rho\in \Z^n$ be  partitions such that  $(\lambda, \mu) + (\nu, \rho) = (\lambda', \mu') + (\mu', \rho')$ and $(\lambda', \mu'), (\nu', \rho')\in L((\lambda, \mu), (\nu, \rho))$ as elements of $\Z^{2n}.$
    Then 
    \[s_{\lambda/\mu}s_{\nu/\rho}\le_{s}s_{\ti\lambda/\ti\mu}s_{\ti\nu/\ti\rho}.\]
    \begin{proof}
        By Theorem \ref{thm:skew_skep}, it suffices to show that \[|\SkewSkep_{\lambda/\mu,\nu/\rho}^\kappa|\le |\SkewSkep_{\ti\lambda/\ti\mu,\ti\nu/\ti\rho}^\kappa|\]
        for all $\kappa.$ Following \cite{speyer2026log}, let $\Delta_{2n}^+ := \{(i, j)\in \Delta_{2n}\mid i+j = 0\mod 2\},$ and for each $g^+\colon \Delta_{2n}^+\to \Z,$ let $\SkewSkep_{\lambda/\mu,\nu/\rho}^\kappa(g^+)$ be the set of skew skep in $\SkewSkep_{\lambda/\mu,\nu/\rho}^\kappa$ whose restriction to $\Delta_{2n}^+$ is $g^+.$ We shall show the following stronger statement:
        \begin{equation}\abs*{\SkewSkep_{\lambda/\mu,\nu/\rho}^\kappa(g^+)}\le \abs*{\SkewSkep_{\ti\lambda/\ti\mu,\ti\nu/\ti\rho}^\kappa(g^+)}\label{eq:skskineq}\end{equation}
        for all $g^+.$ 

        Note that $ K:= \SkewSkep_{*, *}^\kappa(g^+)$, the set of all skew skeps $g$ whose restriction to $\Delta_{2n}^+$ is $g^+$ and $g^{\nwarrow} = \kappa$, is $L$-convex. Indeed, with the entries in even positions filled in, the inequalities in remaining entries are all of the form $g_{ij}-g_{k\ell}\le c_{ijk\ell}$ for various $i, j, k, \ell,$ and the condition $\kappa = g^{\nwarrow}$ is either always or never satisfied, as it depends only on $g^+.$ Let $(\alpha, \beta) := (\lambda, \mu) + (\nu, \rho) = (\ti\lambda, \ti\mu) + (\ti\nu, \ti\rho) \in \Z^{2n}$, and let \[\pi := (0, \alpha_1, \alpha_1 + \alpha_2, \dots, \alpha_1+\dots + \alpha_{n-1},0,\beta_1, \beta_1 + \beta_2, \dots, \beta_1+\dots + \beta_{n-1}).\]
        By considering the projection  map which remembers only the boundary entries that are not in $\Delta^+_{2n}$, 
        one sees that \[\abs*{\SkewSkep_{x/y, (\alpha-x)/(\beta-y)}^\kappa(g^+)} = E_K((x, y) + \pi).\]

        By Theorem~\ref{thm:marginall} and the fact that translations of $L$-log-concave function are log-concave, we obtain that the function 
        \[(x,y)\mapsto \abs*{\SkewSkep_{x/y, (\alpha-x)/(\beta-y)}^\kappa(g^+)}\]

        is $L$-log-concave. Thus we have 
        \begin{equation}\abs*{\SkewSkep_{\lambda/\mu,\nu/\rho}^\kappa(g^+)} \cdot \abs*{\SkewSkep_{\nu/\rho, \lambda/\mu}^\kappa(g^+)}\le \abs*{\SkewSkep_{\ti\lambda/\ti\mu,\ti\nu/\ti\rho}^\kappa(g^+)} \cdot \abs*{\SkewSkep_{\ti\nu/\ti\rho, \ti\lambda/\ti\mu}^\kappa(g^+)}.\label{eq:skskeqsq}\end{equation}
        
        By Theorem~\ref{thm:skew-hive-skep}, we have $\abs*{\SkewSkep_{\lambda/\mu,\nu/\rho}^\kappa(g^+)}= \abs*{\SkewSkep_{\nu/\rho, \lambda/\mu}^\kappa(g^+)}$ and \\$\abs*{\SkewSkep_{\ti\lambda/\ti\mu,\ti\nu/\ti\rho}^\kappa(g^+)}=\abs*{\SkewSkep_{\ti\nu/\ti\rho, \ti\lambda/\ti\mu}^\kappa(g^+)}.$ Thus taking the squareroot of \eqref{eq:skskeqsq} gives \eqref{eq:skskineq}.
    \end{proof}
\end{thm}

Finally, we recover some of Lam--Postnikov--Pylyavskyy's results. Recall that given two partitions $\lambda$ and $\mu$, we define $\lambda+\mu = (\lambda_1 +\mu_1,\lambda_2 + \mu_2,\ldots)$ and $\dfrac{\lambda}{2} = \left(\dfrac{\lambda_1}{2},\dfrac{\lambda_2}{2},\ldots\right)$. We define $\lambda \vee \mu = (\max(\lambda_1,\mu_1), \max(\lambda_2,\mu_2),\ldots)$ and $\lambda \wedge \mu = (\min(\lambda_1,\mu_1), \min(\lambda_2,\mu_2),\ldots)$. We extend the notation to skew shapes by $(\lambda/\mu)\vee(\nu/\rho) = (\lambda\vee\nu)/(\mu\vee\rho)$ and $(\lambda/\mu)\wedge(\nu/\rho) = (\lambda\wedge\nu)/(\mu\wedge\rho)$. Finally, let $\lambda\cup \mu = \nu$, then we define $\operatorname{sort}_1(\lambda,\mu) = (\nu_1,\nu_3,\nu_5,\ldots)$ and $\operatorname{sort}_2(\lambda,\mu) = (\nu_2,\nu_4,\nu_6,\ldots)$.

\begin{cor} Let $\lambda/\mu$ and $\nu/\rho$ be skew shapes. Then we have
    \begin{enumerate}[(a)]
        \item
        {\cite[Theorem 5]{LPP07}} $s_{(\lambda/\mu)\wedge (\nu/\rho)}s_{(\lambda/\mu)\vee (\nu/\rho)}\ge_s s_{\lambda/\mu}s_{\nu/\rho}.$
        \item {\cite[Theorem 12]{LPP07}} $s_{\floor*{\frac{\lambda+\nu}2}/\floor*{\frac{\mu+\rho}2}}s_{\ceil*{\frac{\lambda+\nu}2}/\ceil*{\frac{\mu+\rho}2}}\ge_s s_{\lambda/\mu}s_{\nu/\rho}.$ 
        \item {\cite[Corollary 14]{LPP07}} $s_{\sort_1(\lambda, \nu)/\sort_1(\mu, \rho)}s_{\sort_2(\lambda, \nu)/\sort_2(\mu, \rho)}\ge_s s_{\lambda/\mu}s_{\nu/\rho}.$
    \end{enumerate}
    \begin{proof}
        (a) and (b) are direct corollary. (c) follows from (b) by conjugation.
    \end{proof}
\end{cor}

\section{Applications}

\subsection{Newell-Littlewood numbers and  Koike-Terada log-concavity}

    Now we take a little detour to prove Theorem \ref{thm:new-lit}. Recall that
    \[ s_{[\mu]}s_{[\nu]} = \sum_\lambda N_{\mu,\nu,\lambda}s_{[\lambda]}, \]
    so by Lemma \ref{lm:ends0lnat} it suffices to prove the following theorem.
    
    \begin{thm}\label{thm:nwnumber}
        Let $\mu, \nu, \ti\mu, \ti\nu$ be partitions, such that $\ti\mu, \ti\nu\in L^\natural(\mu, \nu)$ and $\mu + \nu = \ti\mu+\ti\nu.$ Then $N_{\mu, \nu, \lambda}\le N_{\mu', \nu', \lambda}$ for all $\lambda$.
    \end{thm}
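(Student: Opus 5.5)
We prove Theorem~\ref{thm:nwnumber} by expanding the Newell--Littlewood number as a sum of skew Littlewood--Richardson coefficients and then pairing the terms. By \cite[Proposition 2.3]{GOY21},
\[ N_{\mu,\nu,\lambda} = \sum_{\alpha} \langle s_{\mu/\alpha}s_{\nu/\alpha}, s_\lambda\rangle = \sum_\alpha c^{\lambda}_{\mu/\alpha,\nu/\alpha}, \]
where $c^\lambda_{\mu/\alpha,\nu/\alpha}=0$ unless $\alpha\subseteq\mu\wedge\nu$. A term-by-term comparison with the same $\alpha$ does not obviously work, since $(\mu,\alpha),(\nu,\alpha)\mapsto(\ti\mu,\alpha),(\ti\nu,\alpha)$ is a legitimate instance of Theorem~\ref{thm:skew-schur-log}, but one must also check containment $\alpha\subseteq\ti\mu\wedge\ti\nu$. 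I would nevertheless use exactly this pairing. Set $(\lambda,\mu)\leftarrow(\mu,\alpha)$ and $(\nu,\rho)\leftarrow(\nu,\alpha)$ in Theorem~\ref{thm:skew-schur-log}. Then $(\mu,\alpha)+(\nu,\alpha)=(\ti\mu,\alpha)+(\ti\nu,\alpha)$ holds since $\mu+\nu=\ti\mu+\ti\nu$. It then suffices to show $(\ti\mu,\alpha),(\ti\nu,\alpha)\in L((\mu,\alpha),(\nu,\alpha))$ in $\Z^{2n}$, after padding all partitions with zeros so each has length $n$ and the last coordinate is $0$.

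\textbf{Checking the $L$-condition.} We need, for all pairs of coordinates $i<j$ of the concatenated vectors, that the difference of $(\ti\mu,\alpha)$ lies between the corresponding differences of $(\mu,\alpha)$ and $(\nu,\alpha)$. There are three kinds of pairs.
\begin{itemize}
\item If both coordinates lie in the $\alpha$ block, the two differences coincide, so the condition is trivial.
\item If both coordinates lie in the first block, the condition is $\ti\mu\in L(\mu,\nu)$, which follows from $\ti\mu\in L^\natural(\mu,\nu)$.
\item If one coordinate $i$ is in the first block and the other $j$ is in the $\alpha$ block, the differences are $\mu_i-\alpha_j$, $\nu_i-\alpha_j$ and $\ti\mu_i-\alpha_j$. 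The condition reduces to $\min(\mu_i,\nu_i)\le\ti\mu_i\le\max(\mu_i,\nu_i)$, which is exactly the extra coordinatewise condition in the definition of $L^\natural(\mu,\nu)$.
\end{itemize}
Hence $(\ti\mu,\alpha)\in L((\mu,\alpha),(\nu,\alpha))$, and by symmetry the same holds for $(\ti\nu,\alpha)$. This is the key point: $L^\natural$ rather than $L$ is precisely what is needed, in agreement with Lemma~\ref{lm:ends0lnat}.

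\textbf{Conclusion.} Theorem~\ref{thm:skew-schur-log}, stated for arbitrary partitions, now gives
\[ s_{\mu/\alpha}s_{\nu/\alpha}\le_s s_{\ti\mu/\alpha}s_{\ti\nu/\alpha}. \]
Taking the coefficient of $s_\lambda$ and summing over all $\alpha$ yields $N_{\mu,\nu,\lambda}\le N_{\ti\mu,\ti\nu,\lambda}$.

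The one thing to watch is the convention that $s_{\mu/\alpha}=0$ when $\alpha\not\subseteq\mu$, together with the requirement that Theorem~\ref{thm:skew-schur-log} still applies in that case. Concretely, when $\alpha\subseteq\mu\wedge\nu$ fails, the left-hand side vanishes and the inequality is trivial. When $\alpha\not\subseteq\ti\mu\wedge\ti\nu$, we need the theorem's conclusion, which would then force the left-hand side to vanish as well. Alternatively, coordinatewise $\ti\mu_i\ge\min(\mu_i,\nu_i)\ge\alpha_i$, so $\alpha\subseteq\ti\mu\wedge\ti\nu$ automatically whenever $\alpha\subseteq\mu\wedge\nu$. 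This makes the sum over $\alpha$ on the right range over a superset, and each summand is nonnegative. So this obstacle dissolves, and the main work is the block-wise verification of the $L$-condition above.
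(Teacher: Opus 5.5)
Your proposal is correct and follows the paper's own argument. You expand $N_{\mu,\nu,\lambda}$ via \cite[Proposition 2.3]{GOY21}, apply Theorem~\ref{thm:skew-schur-log} separately for each $\alpha$, and check $(\ti\mu,\alpha),(\ti\nu,\alpha)\in L((\mu,\alpha),(\nu,\alpha))$ block by block, which is exactly the paper's auxiliary lemma; your extra observation that $\alpha\subseteq\mu\wedge\nu$ forces $\alpha\subseteq\ti\mu\wedge\ti\nu$ (so the right-hand sum only gains nonnegative terms) makes explicit a detail the paper leaves implicit.
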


    Recall also that
    \[ N_{\mu,\nu,\lambda} = \sum_\alpha\langle s_{\mu/\alpha}s_{\nu/\alpha},s_{\lambda}\rangle, \]
    so Theorem \ref{thm:nwnumber} follows from the following lemma combined with Theorem \ref{thm:skew-schur-log}.

    \begin{lemma}
        Let $\mu, \ti\mu, \nu, \alpha\in \Z^n$. We have $\ti\mu\in L^{\natural}(\mu, \nu)$ if and only if $(\ti\mu,\alpha)\in  L((\mu,\alpha), (\nu,\alpha))$.
    \end{lemma}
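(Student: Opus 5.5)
Both sides can be unwound into explicit systems of inequalities and compared term by term. Write elements of $\Z^{2n}$ as $(z,w)$ with $z,w\in\Z^n$. By definition, $(\ti\mu,\alpha)\in L((\mu,\alpha),(\nu,\alpha))$ says that for every pair of coordinates $p<q$ of $\Z^{2n}$ the difference $z_p-z_q$ of $(\ti\mu,\alpha)$ lies between the corresponding differences of $(\mu,\alpha)$ and $(\nu,\alpha)$. (The definition of $L(u,v)$ is read with $\max$ as the upper bound, as intended.) I split the pairs $(p,q)$ into three groups and handle each separately.

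\textbf{Pairs inside the $\alpha$-block.} Here all three vectors have the same difference $\alpha_i-\alpha_j$, so these conditions hold trivially.

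\textbf{Pairs inside the first block.} These give exactly $\ti\mu\in L(\mu,\nu)$ in $\Z^n$.

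\textbf{Mixed pairs.} For $p=i$ in the first block and $q=n+j$, the condition is
\[
\min(\mu_i-\alpha_j,\nu_i-\alpha_j)\le \ti\mu_i-\alpha_j\le \max(\mu_i-\alpha_j,\nu_i-\alpha_j).
\]
Subtracting the common term $-\alpha_j$, this is equivalent to $\min(\mu_i,\nu_i)\le\ti\mu_i\le\max(\mu_i,\nu_i)$. This conclusion does not depend on $j$, and since $n\ge 1$ at least one such $j$ exists.

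Combining the three groups, $(\ti\mu,\alpha)\in L((\mu,\alpha),(\nu,\alpha))$ holds if and only if $\ti\mu\in L(\mu,\nu)$ and $\ti\mu_i$ lies between $\mu_i$ and $\nu_i$ for every $i$. By the second description of $L^\natural(\mu,\nu)$ given earlier, this is precisely $\ti\mu\in L^\natural(\mu,\nu)$, which proves the equivalence.

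This is the same mechanism as padding by a single $0$ in the definition of $L^\natural$. There, the extra coordinate plays the role of a constant column, and the mixed constraints reduce to coordinatewise bounds. Here the constant block $\alpha$ does the same job, and because the block is identical in all three vectors, $\alpha$ does not interact with anything else.

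There is no real obstacle in this argument. The only care needed is the bookkeeping that the mixed pairs yield exactly the coordinatewise bounds and nothing more. One should also note that the order of the pair (first block before second) does not matter, because the defining condition is symmetric under swapping the two vectors and under negating the difference.
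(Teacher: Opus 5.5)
Your proof is correct and matches the paper's argument. Both reduce membership in $L((\mu,\alpha),(\nu,\alpha))$ to the within-block conditions, which give $\ti\mu\in L(\mu,\nu)$, plus the mixed conditions on $\ti\mu_i-\alpha_j$, which cancel to the coordinatewise bounds characterizing $L^\natural(\mu,\nu)$. You also make explicit three points the paper leaves implicit: the $\alpha$-block pairs impose nothing, $n\ge 1$ ensures at least one mixed pair exists, and the upper bound in the definition of $L(u,v)$ should read $\max$.
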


    \begin{proof}
        We use the characterization $\ti\mu\in L^\natural(\mu, \nu)$ if and only if $\ti\mu_i - \ti\mu_j$ is between $\mu_i - \mu_j$ and $\nu_i - \nu_j$ and $\ti \mu_i$ is between $\mu_i$ and $\nu_i$ for all $i,j$; while $(\ti\mu,\alpha)\in  L((\mu,\alpha), (\nu,\alpha))$ holds if and only if $\ti\mu_i - \ti\mu_j$ is between $\mu_i - \mu_j$ and $\nu_i - \nu_j$, and $\ti \mu_i - \alpha_j$ is between $\mu_i-\alpha_j$ and $\nu_i-\alpha_j$ for all $i,j$. These are clearly equivalent.
    \end{proof}

\subsection{Shadow skew Schur functions}
Now we explain how to modify the proof of Theorem~\ref{thm:skew-schur-log} to obtain Theorem~\ref{thm:shadow-skew-schur-log}. 
\begin{proof}[Proof of Theorem~\ref{thm:shadow-skew-schur-log}]
    Observe that the coefficient of $s_\kappa$ in $S^{(k, \ell)}_{\lambda/\mu}S^{(k, \ell)}_{\nu/\rho}$ is counted by the number of skew skep $g$ such that $g^{\uparrow} = (\lambda'_1, \nu_1', \dots, \lambda_{k}', \nu'_{k}), g^{\rightarrow} = (\mu'_1, \rho_1', \dots, \mu_{k}', \rho'_{k}),$ and $g^{\nwarrow} = \kappa$ for some $\lambda' , \mu', \nu', \rho'$ such that  $\lambda'_i = \lambda_i, \mu'_i = \mu_i, \nu'_i = \nu_i, \rho'_i = \rho_i$ for $i\le \ell.$ These are exactly skew skeps $g$ with $g^{\nwarrow} = \kappa$ such that the first $2\ell$ entries of $g^{\uparrow}$ is $(\lambda_1, \nu_1, \dots, \lambda_\ell, \nu_\ell)$ and the first $2\ell$ entries of $g^{\rightarrow}$ is $(\mu_1, \rho_1, \dots, \mu_\ell, \rho_\ell).$ Let $\SkewSkep_{\lambda/\mu, \nu/\rho}^{\kappa, (k, \ell)}$ denotes this set. 
    As in the proof of Theorem~\ref{thm:skew-schur-log}, for a function $g^+\colon \Delta_{2n}^+\to \Z,$ let $\SkewSkep_{\lambda/\mu,\nu/\rho}^{\kappa, (k, \ell)}(g^+)$ be the set of skew skep in $\SkewSkep_{\lambda/\mu,\nu/\rho}^{\kappa, (k, \ell)}$ whose restriction to $\Delta_{2n}^+$ is $g^+.$ 
    It suffices to show that 
    \[\abs*{\SkewSkep_{\lambda/\mu,\nu/\rho}^{\kappa, (k, \ell)}(g^+)}\le \abs*{\SkewSkep_{\ti\lambda/\ti\mu,\ti\nu/\ti\rho}^{\kappa, (k, \ell)}(g^+)}\]
    for all $g^+.$ By applying to Theorem~\ref{thm:marginall} the projection map that remembers $g_{0,2i-1}$ and $g_{2i-1, 0}$ for $1\le i\le \ell$, we obtain that the function 
    \[(x,y)\mapsto \abs*{\SkewSkep_{x/y,(\alpha-x)/(\beta-y)}^{\kappa, (k, \ell)}(g^+)}\]
    is $L$-log concave, where $(\alpha, \beta) := (\lambda, \mu) + (\nu, \rho) = (\ti\lambda, \ti\mu) + (\ti\nu, \ti\rho).$ The rest of the proof is verbatim to the proof of Theorem~\ref{thm:skew-schur-log}. 
\end{proof}

\section{Phased skew hives and phased peelable tableaux}\label{sec:peelable}

\subsection{Peelable tableaux}

    First, we review Nguyen--Nguyen--Woodruff's peelable tableaux \cite{nguyen2025shuffle}.

    \begin{definition}
        A \textit{$(\lambda/\mu) \circledast (\nu/\rho)$-peelable tableau} is a SSYT of content $(\lambda_1-\mu_1,\nu_1-\rho_1,\lambda_2-\mu_2,\nu_2-\rho_2,\ldots, \lambda_n-\mu_n,\nu_n-\rho_n)$ such that
        \begin{itemize}
            \item if row $i$ and $i+1$ of $\lambda/\mu$ have $a_i$ columns in common, then $T$ has at least $a_i$ disjoint pairs of one $(2i-1)$-square and one $(2i+1)$-square such that the $(2i-1)$-square is Northeast of the $(2i+1)$-square, and
            \item if row $i$ and $i+1$ of $\nu/\rho$ have $b_i$ columns in common, then $T$ has at least $b_i$ disjoint pairs of one $2i$-square and one $(2i+2)$-square such that the $2i$-square is Northeast of the $(2i+2)$-square.
        \end{itemize}
    \end{definition}

    \begin{thm}[{\cite[Theorem 1.2]{nguyen2025shuffle}}]\label{thm:main-thm}
        The Littlewood--Richardson coefficient $c^{\kappa}_{\lambda/\mu, \nu/\rho}$ counts the number of $(\lambda/\mu) \circledast (\nu/\rho)$-peelable tableaux of shape $\kappa$.
    \end{thm}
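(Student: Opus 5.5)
The plan is to prove Theorem~\ref{thm:main-thm} by building an explicit bijection between $(\lambda/\mu)\circledast(\nu/\rho)$-peelable tableaux of shape $\kappa$ and the set $\SkewSkep_{\lambda/\mu,\nu/\rho}^\kappa$. Theorem~\ref{thm:skew_skep} then gives the count $c^\kappa_{\lambda/\mu,\nu/\rho}$. The construction should imitate Bijection~\ref{bij:yama-hive}.

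\textbf{Step 1: from tableaux to a triangular array.} Given a peelable tableau $T$, first pad it with the letters encoding the removed parts $\mu$ and $\rho$, interleaved as in the skep boundary, so that the content becomes $(\mu_1,\rho_1,\ldots)$ followed by the interleaved $\lambda-\mu,\nu-\rho$ data. Then:
\begin{enumerate}
\item form the Gelfand--Tsetlin pattern $x_{i,j}$, whose row $i$ is the shape of the entries $\le i$;
\item take the row partial sums $y_{i,j}=\sum_{k\le j}x_{i,k}$;
\item restrict to a triangular subpattern and reorient it to obtain $g:\Delta_{2n}\to\ZZ$.
\end{enumerate}
By construction the boundary differences are the content, so $g^\uparrow$, $g^\rightarrow$ and $g^\nwarrow=\kappa$ come out right. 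The GT interlacing inequalities, rewritten in terms of $y$, give exactly the first two families of rhombus inequalities in Definition~\ref{def:skew-skep}, just as they give hive inequalities in the classical case.

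\textbf{Step 2 (the main obstacle): the remaining skep inequalities.} The two families $h_{i,j}+h_{i+1,j}\ge h_{i,j-1}+h_{i+1,j+1}$ and $h_{i,j}+h_{i,j+1}\ge h_{i-1,j}+h_{i+1,j+1}$ must be shown equivalent to the peelability condition. I would first convert ``at least $a_i$ disjoint pairs of a $(2i-1)$-square Northeast of a $(2i+1)$-square'' into a linear condition. The plan is to use the greedy/Hall marriage criterion: the maximum number of such pairs is the minimum over row cut-offs $r$ of
\[
\#\{(2i-1)\text{-squares in rows}\le r\}+\#\{(2i+1)\text{-squares in rows}>r+1\}
\]
(up to the exact boundary conventions). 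Each such count is a difference of the $y$'s. The inequality ``this is at least $a_i=\min(\lambda_{i+1},\dots)$-overlap'' should then become a family of inequalities on $g$ whose binding members are precisely the skep rhombi straddling letters $2i-1$ and $2i+1$. The $\nu/\rho$ condition is handled the same way with even letters.

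I expect the non-binding members of these min-families to follow from the GT inequalities already obtained. This redundancy is exactly what must be verified carefully, and it is where I anticipate most of the work.

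\textbf{Fallback via phased rules.} If the direct translation is messy, I would instead interpolate through phases. Phase $0$ is the Remmel--Whitney rule, which is the ordinary LR rule for $(\lambda\ast_{\nu_1}\nu)/(\mu\ast_{\nu_1}\rho)$; it corresponds to $\SkewHive$ via Bijection~\ref{bij:hive-skewhive} and Bijection~\ref{bij:yama-hive}. Each later phase applies Bender--Knuth involutions swapping adjacent letters, moving towards the interleaved content. I would show that each Bender--Knuth step corresponds, after taking GT row sums, to one layer of the octahedron recurrence on $\Pi_n$. By Theorem~\ref{thm:skew-hive-skep}, rhombus inequalities, and hence cardinalities, are preserved along the sections between $S^{\bbot}_{\skewhive}$ and $S^{\bbot}_{\skewskep}$. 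The final phase is then the NNW peelable rule, and the count follows.
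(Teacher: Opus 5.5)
\textbf{Step 2 has a genuine gap, and the fallback does not avoid it.}

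Your overall architecture is right. It is the $k=1$ case of Bijection~\ref{bij:skew-hive-peelable}. There, instead of padding $T$ (a straight-shape tableau, so there is nowhere to put extra letters), one adds $\sum_{a\le i}\mu'_a$, with $\mu'=\mu\circledast_1\rho$, to row $i$ of the row-sum array and reads off $h_{a,b}=y_{a+b,b}$. Interlacing does give the first two families of Definition~\ref{def:skew-skep}.

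The problem in Step~2 is not a matter of boundary conventions. Your sets (all $(2t-1)$-squares in rows $\le r$ plus all $(2t+1)$-squares in rows $\ge r+2$) are vertex covers. So your minimum only bounds the number of disjoint NE pairs from above, and the bound is not sharp: the matching number is not a function of the per-row counts of the two letters.

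\emph{Counterexample.} Take $\lambda/\mu=(1,1)/(0,0)$, $\nu/\rho=(2,0)/(0,0)$, $\kappa=(2,2)$, and $T$ with rows $1\,2$ and $2\,3$.
\begin{itemize}
\item Your formula gives $1=a_1$.
\item But the only $1$ lies west of the only $3$, so $T$ is not peelable. This is correct, since $c^{(2,2)}_{(1,1),(2)}=0$.
\item The tableau with rows $1\,2$ and $3$ (for $\nu/\rho=(1,0)/(0,0)$) has the same row counts of $1$'s and $3$'s and is peelable.
\end{itemize}
After the row shift, your cut-off inequalities are exactly the fourth family $h_{i,j}+h_{i,j+1}\ge h_{i-1,j}+h_{i+1,j+1}$. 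They never produce the third family $h_{i,j}+h_{i+1,j}\ge h_{i,j-1}+h_{i+1,j+1}$. In the example it is the third-family inequality $h_{0,1}+h_{1,1}\ge h_{0,0}+h_{1,2}$, i.e.\ $3\ge 4$, that fails.

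\emph{What is missing is a refined cover.} Let $x_{m,r}$ be the length of row $r$ of the subtableau of entries $\le m$. The $(2t-1)$-squares of row $r$ in columns $\le x_{2t,r+1}$ lie west of every $(2t+1)$-square of row $r+1$. Their other possible partners are in rows $\ge r+2$, which are already covered. So they may be dropped from the cover, lowering its size by $\max(0,x_{2t,r+1}-x_{2t-2,r})$. These refined covers yield the third family. You would then still need to prove, e.g.\ by a greedy matching, that these two kinds of covers are the only ones needed to certify $a_t$ (resp.\ $b_t$) disjoint pairs.

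\emph{The fallback.} Bender--Knuth/octahedron compatibility plus Theorem~\ref{thm:skew-hive-skep} shows only one thing. The Bender--Knuth image of the Remmel--Whitney set is the set of SSYT of shape $\kappa$ whose shifted row-sum arrays are skew skeps. Your sentence ``the final phase is then the NNW peelable rule'' asserts that this set equals the set of $(\lambda/\mu)\circledast(\nu/\rho)$-peelable tableaux, which is Step~2 again.

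The only other way to close the argument is to know that $\BK_k\circ\BK_{k+2}\circ\cdots\circ\BK_{2n-k}$ maps phase-$k$ peelable tableaux onto phase-$(k-1)$ peelable tableaux. That is a statement about peelability you do not address. Given it, the count follows from Remmel--Whitney directly, with no octahedron recurrence needed.

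For comparison, the paper does not prove this statement; it quotes it from Nguyen--Nguyen--Woodruff. Section~\ref{sec:peelable} runs your phase-by-phase scheme, but takes exactly that Bender--Knuth fact (Bijection~\ref{bij:peelable} and its phased corollary) as input. It uses the scheme to deduce the phased skew hive counts, not the peelable count.
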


    \begin{example}\label{ex:peelable1}
        Let $\lambda = (2,2), \mu = (1,0), \nu = (4,3), \rho = (2,0)$, and $\kappa = (4,3,1)$ as in Examples \ref{ex:skew-hives} and \ref{ex:skew-skeps}. There are three $(\lambda/\mu)\circledast(\nu/\rho)$-peelable tableaux of shape $\kappa$
        \[ \begin{ytableau} 
            \textcolor{red}{1} & \textcolor{blue}{2} & \textcolor{blue}{2} & \textcolor{red}{3} \\ 
            \textcolor{red}{3} & \textcolor{blue}{4} & \textcolor{blue}{4} \\
            \textcolor{blue}{4}
            \end{ytableau}, \quad\quad\quad \begin{ytableau} 
            \textcolor{red}{1} & \textcolor{blue}{2} & \textcolor{blue}{2} & \textcolor{blue}{4} \\ 
            \textcolor{red}{3} & \textcolor{red}{3} & \textcolor{blue}{4} \\
            \textcolor{blue}{4}
            \end{ytableau}, \quad\quad\quad \begin{ytableau} 
            \textcolor{red}{1} & \textcolor{blue}{2} & \textcolor{red}{3} & \textcolor{blue}{4} \\ 
            \textcolor{blue}{2} & \textcolor{blue}{4} & \textcolor{blue}{4} \\
            \textcolor{red}{3}
            \end{ytableau}. \]
        Here, the rules for the red numbers come from $\lambda/\mu$ and those for the blue numbers come from $\nu/\rho$.
    \end{example}

    There is an easy shape-preserving bijection between $(\lambda/\mu)\circledast(\nu/\rho)$-peelable tableaux and $(\nu/\rho)\circledast(\lambda/\mu)$-peelable tableaux using Bender--Knuth involutions.

    \begin{definition}\label{defn:BK_CST}
    The {\it Bender--Knuth involution} $\BK_i$ for $1 \le i \le n-1$ is an involution of the set of SSYT of shape $\lambda$ that sends a SSYT $T$ to the SSYT obtained from the following procedure:
    \begin{enumerate}
        \item Let $S$ be the skew tableau obtained by taking only the squares of $T$ with entry equal $i$ and $i+1$;
        \item Each row of $S$ contains  $a$ entries $i$ that are directly above an $i+1$, $b$ entries $i$ that are alone in their columns, $c$ entries $i+1$ that are alone in their columns, and $d$ entries $i+1$ that are directly below an $i$ for some $a,b,c,d \geq 0$;
        \item Construct a skew tableau $S'$ by swapping $b$ and $c$ in each row of $S$;
        \item Define $\BK_i(T)$ to be the tableau obtained by replacing $S$ with $S'$ in $T$.
    \end{enumerate}
    If $\alpha = (\alpha_1,\ldots,\alpha_i,\alpha_{i+1},\ldots,\alpha_\ell)$ is the content of $T$, then $\alpha = (\alpha_1,\ldots,\alpha_{i+1},\alpha_i,\ldots,\alpha_\ell)$ is the content of $\BK_i(T)$.
    \end{definition}

    \begin{bijection}[{\cite[Theorem 1.4]{nguyen2025shuffle}}]\label{bij:peelable}
        Given partitions $\lambda,\mu,\nu,\rho$ of length $n$, and a $(\lambda/\mu)\circledast(\nu/\rho)$-peelable tableau $T$, then
        \[ T' = \BK_1\circ\BK_3\circ\cdots\circ\BK_{2n - 1} (T) \]
        is a $(\nu/\rho)\circledast(\lambda/\mu)$-peelable tableau.
    \end{bijection}

    \begin{example}\label{ex:peelable2}
        Applying $\BK_1\circ\BK_3$ to the two peelable tableaux in Example \ref{ex:peelable1}, we get
        \[ \begin{ytableau} 
            \textcolor{blue}{1} & \textcolor{blue}{1} & \textcolor{red}{2} & \textcolor{red}{4} \\ 
            \textcolor{blue}{3} & \textcolor{blue}{3} & \textcolor{blue}{3} \\
            \textcolor{red}{4}
            \end{ytableau}, \quad\quad\quad \begin{ytableau} 
            \textcolor{blue}{1} & \textcolor{blue}{1} & \textcolor{red}{2} & \textcolor{blue}{3} \\ 
            \textcolor{blue}{3} & \textcolor{blue}{3} & \textcolor{red}{4} \\
            \textcolor{red}{4}
            \end{ytableau}, \quad\quad\quad \begin{ytableau} 
            \textcolor{blue}{1} & \textcolor{blue}{1} & \textcolor{blue}{3} & \textcolor{blue}{3} \\ 
            \textcolor{red}{2} & \textcolor{blue}{3} & \textcolor{red}{4} \\
            \textcolor{red}{4}
            \end{ytableau}. \]
        We encourage the readers to verify that they are all $(\nu/\rho)\circledast(\lambda/\mu)$-peelable, where $\lambda,\mu,\nu,\rho$ are as in Example \ref{ex:peelable1}
    \end{example}

    As a consequence of Bijection \ref{bij:peelable}, we actually have $2n$ different Littlewood--Richardson rules.

\subsection{Phased interlacing paths and phased peelable tableaux}\label{subsec:path-peelable}

    \begin{definition}
        A \textit{path} $P$ of length $n$ is a path from $(n,0)$ to $(0,n)$ that consists of only $N$ steps $(0,1)$ and $W$ steps $(-1,0)$. For $1\leq k\leq n$, the \textit{$k$-phased interlacing path}, denoted $P_k$ is the path $W^kNWNW\ldots NWN^k$. The \textit{$-k$-phased interlacing path}, denoted $P_{-k}$ is the path $N^kWNWN\ldots WNW^k$

        Given a path $P$, let $N_{P,i}$ be the index of the $i$th $N$ step, and let $W_{P,i}$ be the index of the $i$th $W$ step. For example, $N_{P_1,i} = 2i$ and $W_{P_1,i} = 2i-1$.
    \end{definition}

    \begin{definition}
        A \textit{$k$-phased $(\lambda/\mu) \circledast_k (\nu/\rho)$-peelable tableau} is a SSYT of content $(\lambda_1-\mu_1,\ldots,\lambda_k-\mu_k,\nu_1-\rho_1,\lambda_{k+1}-\mu_{k+1},\nu_2-\rho_2,\ldots, \lambda_n-\mu_n,\nu_{n-k+1}-\rho_{n-k+1},\ldots,\nu_n-\rho_n)$ such that
        \begin{itemize}
            \item if row $i$ and $i+1$ of $\lambda/\mu$ have $a_i$ columns in common, then $T$ has at least $a_i$ disjoint pairs of one $W_{P_k,i}$-square and one $W_{P_k,i+1}$-square such that the $W_{P_k,i}$-square is Northeast of the $W_{P_k,i+1}$-square, and
            \item if row $i$ and $i+1$ of $\nu/\rho$ have $b_i$ columns in common, then $T$ has at least $b_i$ disjoint pairs of one $N_{P_k,i}$-square and one $N_{P_k,i+1}$-square such that the $N_{P_k,i}$-square is Northeast of the $N_{P_k,i+1}$-square.
        \end{itemize}
    \end{definition}

    Repeated applications of Bijection \ref{bij:peelable} give the following $2n$ rules.

    \begin{cor}
        For $1\leq |k|\leq n$, the Littlewood--Richardson coefficient $c^{\kappa}_{\lambda/\mu, \nu/\rho}$ counts the number of $(\lambda/\mu) \circledast_k (\nu/\rho)$-peelable tableaux of shape $\kappa$.
    \end{cor}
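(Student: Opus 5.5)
The plan is to start from Theorem~\ref{thm:main-thm}, which handles $k=1$: $c^\kappa_{\lambda/\mu,\nu/\rho}$ counts $(\lambda/\mu)\circledast(\nu/\rho)$-peelable tableaux of shape $\kappa$, and these are exactly the $1$-phased ones, since $P_1 = WNWN\cdots WN$ gives $W_{P_1,i}=2i-1$ and $N_{P_1,i}=2i$. From there I would move between phases by Bender--Knuth involutions, each of which preserves shape and hence gives a bijection between the sets being counted.

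\textbf{Step 1 (the moves).} Going from $P_k$ to $P_{k+1}$ swaps adjacent steps. Writing $P_k = W^k (NW)^{n-k} N^k$, turning it into $P_{k+1} = W^{k+1}(NW)^{n-k-1}N^{k+1}$ means moving one $W$ leftward past the initial $N$'s and one $N$ rightward, that is, a sequence of adjacent transpositions of an $N$-step with a $W$-step. More cleanly, $P_k$ and $P_{k+1}$ differ by transposing the pairs $NW\to WN$ at positions $k+1,k+3,\ldots,2n-k-1$. These positions are pairwise non-adjacent, so the product $\prod \BK_j$ over those $j$ is well defined and commuting.

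\textbf{Step 2 (the key lemma).} If a path $P$ has an $N$ at position $j$ and a $W$ at position $j+1$, and $P'$ is $P$ with these two steps transposed, then $\BK_j$ maps $P$-peelable tableaux bijectively onto $P'$-peelable tableaux. Here $P$-peelable means the content is read along $P$, and the pairing conditions are imposed between consecutive $W$ labels for $\lambda/\mu$ and between consecutive $N$ labels for $\nu/\rho$.

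Content is handled by the last line of Definition~\ref{defn:BK_CST}. For the pairing conditions, the only ones involving the labels $j$ or $j+1$ pair $j$ with the adjacent label of the same type, namely $j-1$ or $j+2$ (or the nearest label of that type), never $j$ with $j+1$. This is exactly the local situation of \cite[Theorem 1.4]{nguyen2025shuffle}, whose proof is a per-$\BK_{2i-1}$ local argument. I would reprove or cite the single-swap version: $\BK_j$ changes only the $j/(j+1)$ squares, and the existence of $a$ disjoint NE-pairs between labels $x$ and $j$ is transferred to labels $x$ and $j+1$ by a Hall-type matching argument. The $-k$ cases follow symmetrically, starting from $P_{-1} = NWNW\cdots$, which is obtained from $P_1$ by the full product $\BK_1\circ\BK_3\circ\cdots$, i.e.\ Bijection~\ref{bij:peelable} itself. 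Alternatively, one can note that $(\lambda/\mu)\circledast_{-k}(\nu/\rho)$ is $(\nu/\rho)\circledast_k(\lambda/\mu)$ with roles swapped.

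\textbf{Step 3.} Compose the moves of Step 1 inductively in $k$ to get $P_1\to P_2\to\cdots\to P_n$, and likewise $P_{-1}\to\cdots\to P_{-n}$. Each map is a shape-preserving bijection, so all the counts equal $c^\kappa_{\lambda/\mu,\nu/\rho}$.

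The main obstacle is Step 2, because \cite[Theorem 1.4]{nguyen2025shuffle} is stated only for the full alternating swap. If its proof does not localize to a single transposition, I would instead realize each step $P_k\to P_{k+1}$ as a restriction of Bijection~\ref{bij:peelable} applied to suitable sub-shapes. Concretely, I would treat the labels outside the $k$-block as a fixed $\lambda/\mu$ part and apply the theorem to truncated partitions whose contents interlace.
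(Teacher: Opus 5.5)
Your reduction of the negative phases is fine: swapping $W\leftrightarrow N$ turns $P_k$ into $P_{-k}$, so $(\lambda/\mu)\circledast_{-k}(\nu/\rho)$-peelable is the same as $(\nu/\rho)\circledast_{k}(\lambda/\mu)$-peelable. Step 1 is also correct: the move $P_k\to P_{k+1}$ is the commuting product $\BK_{k+1}\circ\BK_{k+3}\circ\cdots\circ\BK_{2n-k-1}$, which is the move the paper uses.

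The gap is Step 2. The single-transposition lemma is false for general paths, and Step 3 applies it at intermediate paths that are not phased. Take $n=3$, $\lambda/\mu=(2,2,1)/(2,1,0)$, $\nu/\rho=(1,1,0)/(0,0,0)$ and $\kappa=(2,2)$.
\begin{enumerate}
\item Here $c^\kappa_{\lambda/\mu,\nu/\rho}=\langle s_1^2s_{11},s_{22}\rangle=1$, the entries are $2,3,4,5$ once each, and the only active condition is $N_1$--$N_2$.
\item For $P_1$ that condition asks for a $2$ northeast of a $4$. So the unique $P_1$-peelable tableau is $T$ with rows $(2,3)$ and $(4,5)$, and $\BK_4T=T$.
\item For $WNWWNN$ the same condition asks for a $2$ northeast of a $5$. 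Neither SSYT of shape $(2,2)$ on $\{2,3,4,5\}$ has one, so that count is $0$.
\end{enumerate}
Similarly, $\lambda/\mu=(1,1,1)/(1,0,0)$ with $\nu/\rho=(2,1,0)/(1,0,0)$ makes the $WWNNWN$ rule reached by $\BK_2$ count $0$ instead of $1$. So neither ordering of $\BK_2,\BK_4$ works. Consistently, the paper only raises arbitrary paths as a question in its final remarks.

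The underlying reason is that $\BK_j$ slides the free $j/(j{+}1)$ entries of a row east or west. That changes northeast relations with a same-type partner label two or more steps away; in the example, the $3$ sits between the $2$ and the $4$.

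The repair is what the paper means by repeated applications of Bijection~\ref{bij:peelable}: apply the whole product at once. On positions $k+1,\dots,2n-k$, $P_k$ reads $(NW)^{n-k}$ and $P_{k+1}$ reads $(WN)^{n-k}$. There the product is the Nguyen--Nguyen--Woodruff map for rows $k+1,\dots,n$ of $\lambda/\mu$ and rows $1,\dots,n-k$ of $\nu/\rho$. Each condition internal to this block ($W_i$--$W_{i+1}$ for $i>k$, $N_i$--$N_{i+1}$ for $i<n-k$) is transferred by two $\BK$'s acting simultaneously on four consecutive labels, as in that bijection.

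Your fallback gestures at this but does not handle the two conditions straddling the block:
\begin{enumerate}
\item $W_k$--$W_{k+1}$, where labels $k,k+2$ become $k,k+1$;
\item $N_{n-k}$--$N_{n-k+1}$, where labels $2n-k-1,2n-k+1$ become $2n-k,2n-k+1$.
\end{enumerate}
Each involves a single $\BK_j$ and a partner label adjacent to $\{j,j+1\}$, which is exactly the case where a single-swap transfer is valid. A $j+2$ in the row below a free segment forces $j+2$'s under every square of that segment to its west. A $j-1$ in the row above forces $j-1$'s over every square of the segment to its east. Hence the maximal number of disjoint northeast pairs is unchanged. All remaining conditions involve only labels the product does not touch.
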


    \begin{example}\label{ex:phased-peelable}
        Let $\lambda,\mu,\nu,\rho$, and $\kappa$ be as in Example \ref{ex:peelable1}. The three SSYTs in Example \ref{ex:peelable1} are $(\lambda/\mu) \circledast_1 (\nu/\rho)$-peelable, and the three SSYTs in Example \ref{ex:peelable2} are $(\lambda/\mu) \circledast_{-1} (\nu/\rho)$-peelable. Applying $\BK_2$ to the SSYTs in Example \ref{ex:peelable1}, we get
        \[ \begin{ytableau} 
            \textcolor{red}{1} & \textcolor{red}{2} & \textcolor{blue}{3} & \textcolor{blue}{3} \\ 
            \textcolor{red}{2} & \textcolor{blue}{4} & \textcolor{blue}{4} \\
            \textcolor{blue}{4}
            \end{ytableau}, \quad\quad\quad \begin{ytableau} 
            \textcolor{red}{1} & \textcolor{red}{2} & \textcolor{blue}{3} & \textcolor{blue}{4} \\ 
            \textcolor{red}{2} & \textcolor{blue}{3} & \textcolor{blue}{4} \\
            \textcolor{blue}{4}
            \end{ytableau}, \quad\quad\quad \begin{ytableau} 
            \textcolor{red}{1} & \textcolor{red}{2} & \textcolor{blue}{3} & \textcolor{blue}{4} \\ 
            \textcolor{red}{2} & \textcolor{blue}{4} & \textcolor{blue}{4} \\
            \textcolor{blue}{3}
            \end{ytableau}. \]
            These are $(\lambda/\mu) \circledast_2 (\nu/\rho)$-peelable.
    \end{example}

\subsection{Phased skew hives}

    \begin{definition}
        For $k\in \ZZ$, let the height of a diagonal $x+y = k$ be $k-n$. Given a path $P$, let $\hht_P(i)$ be the height of $P$ after the $i$th step. Here, we take $\hht_P(0) = 0$. For example, $\hht_{P_1}(i) = 0$ if $i$ is even and $\hht_{P_1}(i) = -1$ if $i$ is odd.

        Given the $k$-phased path $P_k$, the \textit{height array} of $P_k$ is the function $\Ht_{P_k}:\Delta_{2n}\rightarrow \ZZ$ defined by
        \[ \Ht_{P_k}(i,j) = \hht(2n-i-j). \]
        For example, for $n = 3$, the height array of $P_1$ is in Figure \ref{subfig:skew-skep-coor}, and the height array of $P_3$ is in Figure \ref{subfig:skew-hive-coor}. The height array of $P_2$ is
        \[ \begin{tikzcd}[sep=tiny]
            0 &&&&&& \\
            {-1} & 0 \\
            {-2} & {-1} & 0 \\
            {-1} & {-2} & {-1} & 0 \\
            {-2} & {-1} & {-2} & {-1} & 0 \\
            {-1} & {-2} & {-1} & {-2} & {-1} & 0 \\
            0 & {-1} & {-2} & {-1} & {-2} & {-1} & 0
        \end{tikzcd} \]
    \end{definition}

    One can think of the height array as integral points on a section of $\Pi_n$. Then, one has a collection of rhombus inequalities on this section. These rhombus inequalities define $k$-phased skew hives.

    \begin{definition}
        A \textit{$k$-phased skew hive} is a function $h:\Delta_{2n}\rightarrow\ZZ$ obeying the rhombus inequalities determined by $\Ht_{P_k}$.
        
        Let the \textit{$k$-shuffling} of $\lambda$ and $\nu$ be
        \[ \lambda\circledast_k\nu = (\lambda_1,\ldots, \lambda_k,\nu_1,\lambda_{k+1},\nu_2,\ldots,\lambda_n,\nu_{n-k+1},\ldots,\nu_n) \]
        Let $k$-$\SkewHive_{\lambda/\mu,\nu/\rho}^\kappa$ be the set of $k$-phased skew hive with $h_{00} = 0$, $h^\uparrow = \lambda\circledast_k\nu$, $h^\rightarrow = \mu\circledast_k\rho$, and $h^\nwarrow = \kappa$. In particular, $n$-$\SkewHive_{\lambda/\mu,\nu/\rho}^\kappa = \SkewHive_{\lambda/\mu,\nu/\rho}^\kappa$, and $1$-$\SkewHive_{\lambda/\mu,\nu/\rho}^\kappa = \SkewSkep_{\lambda/\mu,\nu/\rho}^\kappa$.
    \end{definition}

    \begin{thm}\label{thm:phased-skew-hives}
        Given partitions $\lambda,\mu,\nu,\rho$ of length $n$, $\kappa$ of length $2n$, and $1\leq |k|\leq n$, then
        \[ c_{\lambda/\mu, \nu/\rho}^\kappa = |\text{$k$-}\SkewHive_{\lambda/\mu,\nu/\rho}^\kappa|. \]
    \end{thm}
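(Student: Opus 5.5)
The plan is to realize every $k$-phased skew hive as the restriction of an octahedron-recurrence solution $\Tilde{h}:\Pi_n\rightarrow\ZZ$ to a suitable section, and then run the argument of Theorem~\ref{thm:skew-hive-skep} with this section in place of the skew skep section. Write $S_k$ for the set of points $(i,j,\pm\Ht_{P_k}(i,j))$, with the sign convention matching $S^{\bbot}_{\skewhive}$ (for $k=n$) and $S^{\bbot}_{\skewskep}$ (for $k=1$). I would first check that $S_k$ is the lattice-point set of a genuine section $\SSS_k$ of $\RR\Pi_n$. This amounts to two facts. First, consecutive heights of $P_k$ differ by $\pm1$, so adjacent entries of the height array differ by $\pm 1$. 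Second, the parity condition $t\equiv i+j \pmod 2$ holds, and $|t|\le i+j\le 2n-|t|$ holds because the path stays within the band between the two boundary paths $P_n$ and $P_{-n}$. The rhombus inequalities along $\SSS_k$ are exactly the defining inequalities of $k$-phased skew hives.

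Next I would prove the analogue of Theorem~\ref{thm:skew-hive-skep}: for $\Tilde{h}$ obeying the octahedron recurrence, $\Tilde{h}|_{S_k}$ is a $k$-phased skew hive if and only if $\Tilde{h}$ satisfies all rhombus inequalities. The direction from all rhombus inequalities to the $k$-phased inequalities is immediate. For the converse I use Lemma~\ref{lem:any-rhombus-int}, so for each wavefront $\WW$ I need a section transverse to $\WW$ on which the rhombus inequalities along $\WW\cap\SSS$ follow from the $k$-phased inequalities.

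The natural choice is to build two sections $\SSS_{k,1}$ and $\SSS_{k,2}$, modelled on $\SSS_1$ and $\SSS_2$. They agree with $\SSS_k$ on the middle band, where $\Ht_{P_k}$ oscillates between two adjacent values as for skeps, and near the corners, where $\Ht_{P_k}$ is linear as for skew hives. In the transition regions they are bent exactly as $\SSS_1$ and $\SSS_2$ are bent. Wavefronts of types (1) and (2) then meet $\SSS_{k,1}$ transversely, and wavefronts of type (3) meet $\SSS_{k,2}$ transversely away from the cut point. I expect this step to be the main obstacle. The difficulty is verifying transversality, and in particular avoiding the cut points $(0,0,c)$, near the two kinks of $P_k$ at steps $k$ and $2n-k$, where the hive-like and skep-like regimes meet.

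A fallback is to pass between $\SSS_k$ and $\SSS_{k\pm1}$ by local flips. Each flip is one application of the octahedron recurrence, and each flip preserves the count by Henriques' Lemma~3.1 \cite{henriques2006octahedron} applied one wavefront at a time. This is analogous to how $\BK$ involutions connect the phased peelable rules.

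Finally I would handle the boundary. As in the boundary step of Theorem~\ref{thm:skew-hive-skep}, $h^\nwarrow=\kappa$ is invariant along the recurrence. Tracking the edge values along $i=0$ and $j=0$ shows that $\Tilde{h}|_{S_k}$ has $h^\uparrow=\lambda\circledast_k\nu$ and $h^\rightarrow=\mu\circledast_k\rho$ exactly when $\Tilde{h}|_{S^{\bbot}_{\skewhive}}$ has $h^\uparrow=(\lambda,\nu)$ and $h^\rightarrow=(\mu,\rho)$. This is because the edge entries only undergo the one-dimensional recurrence $x_{t+1}+x_{t-1}=x'+x''$, which reorders increments according to the path $P_k$.

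Putting these together, restriction gives a bijection between $k$-$\SkewHive_{\lambda/\mu,\nu/\rho}^\kappa$ and $\SkewHive_{\lambda/\mu,\nu/\rho}^\kappa$, and Theorem~\ref{thm:skew_hive} gives the count. Negative $k$ is handled by the $t\mapsto -t$ symmetry, which swaps the roles of $(\lambda/\mu)$ and $(\nu/\rho)$ and reverses the path.
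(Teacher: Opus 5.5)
Your route is the one the paper mentions but does not carry out (``one can prove Theorem~\ref{thm:phased-skew-hives} using the same technique as Theorem~\ref{thm:skew-hive-skep}''). The paper instead proves it through Bijection~\ref{bij:skew-hive-peelable}. For $k=n$ it composes three maps: the Remmel--Whitney transpose from $(\lambda/\mu)\circledast_n(\nu/\rho)$-peelable tableaux to Yamanouchi tableaux of shape $(\lambda\ast_{\nu_1}\nu)/(\mu\ast_{\nu_1}\rho)$, the Gelfand--Tsetlin row-sum map, and the diagonal shift of Bijection~\ref{bij:hive-skewhive}. It then inducts downward in $k$, matching the octahedron moves on the diagonals $i+j=k,k+2,\ldots,2n-k$ with $\BK_k\circ\BK_{k+2}\circ\cdots\circ\BK_{2n-k}$, and gets the count from the Nguyen--Nguyen--Woodruff phased peelable rules. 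Your argument stays inside the section/wavefront machinery and reduces to Theorem~\ref{thm:skew_hive}, so it needs no tableau input. It also gives the rhombus-inequality characterization (the analogue of item (5) of Theorem~\ref{thm:skew-hive-skep}) for every $k$. The paper's route instead buys an explicit tableau-level bijection, which is the point of Section~\ref{sec:peelable}.

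The step you flag as the main obstacle is routine. You do need to read the $k$-phased inequalities as all unit rhombi with vertices in $S_k$, not the rhombi of one fixed triangulation; already for $k=1$, Definition~\ref{def:skew-skep} mixes rhombi from both triangulations of a unit square.

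Let $\SSS_{k,1}$ use the diagonal $(i+1,j)$--$(i,j+1)$ in every unit square. All its triangles then lie in planes with normal $(1,1,\pm1)$, so it is transverse to every wavefront of types (1) and (2).

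Let $\SSS_{k,2}$ use the diagonal $(i,j)$--$(i+1,j+1)$ in every square whose lower-left corner lies on a diagonal $d$ with $k-1\le d\le 2n-k-1$, and the other diagonal elsewhere. That range is exactly where $\Ht_{P_k}$ agrees on diagonals $d$ and $d+2$, so those lifted triangles are unit triangles. They lie in planes with normal $(1,-1,\pm1)$. The corner triangles lie in $t+i+j=0$ or $t-i-j=-2n$, which are not wavefronts because $c=0$ and $|c|=2n$ are excluded. Hence $\SSS_{k,2}$ is transverse to all wavefronts of type (3).

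The kinks cause no trouble. The cut points are irrelevant, since $|t|\le i+j$ forces $(0,0,c)\notin\RR\Pi_n$ for $c\neq 0$. Your flip fallback is unnecessary, and on its own it would not be an argument: \cite[Lemma 3.1]{henriques2006octahedron} presupposes exactly this transversality.
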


    One can prove Theorem \ref{thm:phased-skew-hives} using the same technique as Theorem \ref{thm:skew-hive-skep}. However, we will give another proof by showing a bijection between $k$-phased skew hives and $k$-phased peelable tableaux.

    \begin{bijection}\label{bij:skew-hive-peelable}
        Let $\lambda,\mu,\nu,\rho$ be partitions with $n$ parts. For $1\leq |k| \leq n$, let $T$ be a $(\lambda/\mu)\circledast_k (\nu/\rho)$-peelable tableau,
        \begin{itemize}
            \item let $\{x_{i,j}~|~0\leq i\leq 2n, 1\leq j\leq 2n\}$ be the Gelfand-Tsetlin pattern, where $(x_{i,1},x_{i,2},\ldots,x_{i,n})$ is the shape of of the subtableaux containing the entries $0,\ldots,i$,
            \item let $\{y_{i,j}~|~0\leq i, j\leq 2n\}$ be the row sum of $\{x_{i,j}\}$, where $y_{i,0} = 0$ and $y_{i,j} = \sum_{a=1}^j x_{i,a}$,
            \item let $\mu' = \mu\circledast_{k}\rho$, add $\sum_{a = 1}^i \mu'_a$ to each $y_{i,j}$,
            \item the subpattern $\{y_{i,j}~|~i\geq j\}$ forms a skew hive in $k$-$\SkewHive_{\lambda/\mu,\nu/\rho}^\kappa$.
        \end{itemize}
    \end{bijection}

    \begin{proof}

        It suffices to prove for $1<k<n$. First we prove for $k = n$.
        
        Recall that we have Bijection \ref{bij:hive-skewhive} from $\Hive_{\mu\ast_{\nu_1}\rho,\kappa}^{\lambda\ast_{\nu_1}\nu}$ to $\SkewHive_{\lambda/\mu,\nu/\rho}^\kappa$ by subtracting each diagonal by some multiple of $\nu_1$. Furthermore, we have Bijection \ref{bij:yama-hive} from Yamanouchi tableaux of shape $(\lambda\ast_{\nu_1}\nu) /(\mu\ast_{\nu_1}\rho)$ to hives by taking row sums of the Gelfand--Tsetlin pattern.
        
        It was shown in \cite{remmel1984multiplying} that there is a bijection between $(\lambda/\mu)\circledast_n (\nu/\rho)$-peelable tableaux and Yamanouchi tableaux of shape $(\lambda\ast_{\nu_1}\nu) /(\mu\ast_{\nu_1}\rho)$. Let $T$ be a $(\lambda/\mu)\circledast_n (\nu/\rho)$-peelable tableau, we obtain a Yamanouchi tableau $T'$ of shape $(\lambda\ast_{\nu_1}\nu) /(\mu\ast_{\nu_1}\rho)$ by
        \[ \text{\# $i$ in row $j$ of $T$} = \text{\# $j$ in row $i$ of $T'$}. \]
        Let $y$ and $y'$ be the row sum arrays of $T$ and $T'$, respectively. The effect of this bijection on the row sum array is reflection $y_{i,j}\leftrightarrow y'_{j,i}$. Thus, subtracting multiples of $\nu_1$ from rows of $y$ is the same as subtracting from diagonals of $y'$, which gives a skew hive.

        Now we proceed by induction. Suppose the bijection is true for $k$, we prove for $k-1$. To obtain a $(k-1)$-phased skew hive from a $k$-phased skew hive, we do the octahedron recurrence at diagonals $i+j = k,k+2,\ldots,2n-k$. Meanwhile, to obtain a $(\lambda/\mu)\circledast_{k-1} (\nu/\rho)$-peelable tableau from a $(\lambda/\mu)\circledast_k (\nu/\rho)$-peelable tableau, we do $\BK_k\circ \BK_{k+2} \circ \cdots \circ\BK_{2n-k}$. It is not difficult to check that Bender--Knuth involutions coincide with the octahedron recurrence after the procedure.
    \end{proof}

    \begin{example}\label{ex:peelable-GT}
        Let $\lambda/\mu = (6,6,2)/(3,1,0)$ and $\nu/\rho = (4,4,3)/(2,2,0)$. Let $T$ be the following $(\lambda/\mu)\circledast_3 (\nu/\rho)$-peelable tableau
        \[ T = \begin{ytableau} 
            \textcolor{red}{1} & \textcolor{red}{1} & \textcolor{red}{1} & \textcolor{red}{2} & \textcolor{red}{2} & \textcolor{blue}{4} & \textcolor{blue}{6} \\ 
            \textcolor{red}{2} & \textcolor{red}{2} & \textcolor{red}{3} & \textcolor{blue}{5} \\
            \textcolor{red}{3} & \textcolor{blue}{4} & \textcolor{blue}{6} \\
            \textcolor{blue}{5} \\
            \textcolor{blue}{6}
            \end{ytableau}. \]
        The Gelfand--Tsetlin pattern is
        \[ \begin{matrix}
            7 && 4 && 3 && 1 && 1 && 0 \\
            & 6 && 4 && 2 && 1 && 0 && 0 \\
            && 6 && 3 && 2 && 0 && 0 && 0 \\
            &&& 5 && 3 && 1 && 0 && 0 && 0 \\
            &&&& 5 && 2 && 0 && 0 && 0 && 0 \\
            &&&&& 3 && 0 && 0 && 0 && 0 && 0 \\
            &&&&&& 0 && 0 && 0 && 0 && 0 && 0
        \end{matrix}. \]
        The row sum array is
        \[ \begin{matrix}
            0 && 7 && 11 && 14 && 15 && 16 && 16 \\
            & 0 && 6 && 10 && 12 && 13 && 13 && 13 \\
            && 0 && 6 && 9 && 11 && 11 && 11 && 11 \\
            &&& 0 && 5 && 8 && 9 && 9 && 9 && 9 \\
            &&&& 0 && 5 && 7 && 7 && 7 && 7 && 7 \\
            &&&&& 0 && 3 && 3 && 3 && 3 && 3 && 3 \\
            &&&&&& 0 && 0 && 0 && 0 && 0 && 0 && 0
        \end{matrix}. \]
        We have $\mu' = (\mu,\rho) = (3,1,0,2,2,0)$, so adding $\sum_{k_1}^i\mu'_k$ to each row $i$, we have
        \[ y = \begin{matrix}
            \textcolor{red}{8} && \textcolor{red}{15} && \textcolor{red}{19} && \textcolor{red}{22} && \textcolor{red}{23} && \textcolor{red}{24} && \textcolor{red}{24} \\
            & \textcolor{red}{8} && \textcolor{red}{14} && \textcolor{red}{18} && \textcolor{red}{20} && \textcolor{red}{21} && \textcolor{red}{21} && 21 \\
            && \textcolor{red}{6} && \textcolor{red}{12} && \textcolor{red}{15} && \textcolor{red}{17} && \textcolor{red}{17} && 17 && 17 \\
            &&& \textcolor{red}{4} && \textcolor{red}{9} && \textcolor{red}{12} && \textcolor{red}{13} && 13 && 13 && 13 \\
            &&&& \textcolor{red}{4} && \textcolor{red}{9} && \textcolor{red}{11} && 11 && 11 && 11 && 11 \\
            &&&&& \textcolor{red}{3} && \textcolor{red}{6} && 6 && 6 && 6 && 6 && 6 \\
            &&&&&& \textcolor{red}{0} && 0 && 0 && 0 && 0 && 0 && 0
        \end{matrix}. \]
        The red entries form the skew hive
        \[ h = \begin{matrix}
            24 \\
            21 & 24 \\
            17 & 21 & 23 \\
            13 & 17 & 20 & 22 \\
            11 & 12 & 15 & 18 & 19 \\
            6 & 9 & 9 & 12 & 14 & 15 \\
            0 & 3 & 4 & 4 & 6 & 8 & 8
        \end{matrix}. \]
        
        On the other hand, the above peelable tableau bijects to the following Yamanouchi tableau
        \[ T' = \begin{ytableau} 
            \none & \none & \none & \none & \none & \none & \none & 1 & 1 & 1 \\
            \none & \none & \none & \none & \none & 1 & 1 & 2 & 2 \\
            \none & \none & \none & \none & 2 & 3 \\
            \none & \none & 1 & 3 \\
            \none & \none & 2 & 4 \\
            1 & 3 & 5
            \end{ytableau}. \]
        The row sum array is
        \[ \begin{matrix}
            0 && 10 && 19 && 25 && 29 && 33 && 36 \\
            & 0 && 10 && 19 && 25 && 29 && 33 && 36 \\
            && 0 && 10 && 19 && 25 && 29 && 33 && 35 \\
            &&& 0 && 10 && 19 && 25 && 29 && 32 && 34 \\
            &&&& 0 && 10 && 19 && 24 && 27 && 30 && 31 \\
            &&&&& 0 && 10 && 17 && 21 && 24 && 26 && 27 \\
            &&&&&& 0 && 7 && 12 && 16 && 18 && 20 && 20
        \end{matrix}. \]
        Subtracting multiples of $\nu_1 = 4$ from diagonals, we have
        \[ y' = \begin{matrix}
            0 && 6 && 11 && 13 && 17 && 21 && 24 \\
            & 0 && 6 && 11 && 13 && 17 && 21 && 24 \\
            && 0 && 6 && 11 && 13 && 17 && 21 && 23 \\
            &&& 0 && 6 && 11 && 13 && 17 && 20 && 22 \\
            &&&& 0 && 6 && 11 && 12 && 15 && 16 && 19 \\
            &&&&& 0 && 6 && 9 && 9 && 12 && 14 && 15 \\
            &&&&&& 0 && 3 && 4 && 4 && 6 && 8 && 8
        \end{matrix}. \]
        This is the reflection of $y$.

        Now to obtain a $2$-phased skew hive from $h$, we do the octahedron recurrence along the diagonals $i+j = 3$, which gives
        \[ h' = \begin{matrix}
            24 \\
            21 & 24 \\
            17 & 21 & 23 \\
            \textcolor{red}{15} & 17 & 20 & 22 \\
            11 & \textcolor{red}{14} & 15 & 18 & 19 \\
            6 & 9 & \textcolor{red}{12} & 12 & 14 & 15 \\
            0 & 3 & 4 & \textcolor{red}{6} & 6 & 8 & 8
        \end{matrix}. \]
        Applying $\BK_3$ to $T$, we get
        \[ \begin{ytableau} 
            \textcolor{red}{1} & \textcolor{red}{1} & \textcolor{red}{1} & \textcolor{red}{2} & \textcolor{red}{2} & \textcolor{blue}{3} & \textcolor{blue}{6} \\ 
            \textcolor{red}{2} & \textcolor{red}{2} & \textcolor{red}{4} & \textcolor{blue}{5} \\
            \textcolor{blue}{3} & \textcolor{red}{4} & \textcolor{blue}{6} \\
            \textcolor{blue}{5} \\
            \textcolor{blue}{6}
            \end{ytableau}. \]
        The row sum array of this peelable tableau only differs from $y$ on row $3$
        \[ \begin{matrix}
            8 && 15 && 19 && 22 && 23 && 24 && 24 \\
            & 8 && 14 && 18 && 20 && 21 && 21 && 21 \\
            && 6 && 12 && 15 && 17 && 17 && 17 && 17 \\
            &&& \textcolor{red}{6} && \textcolor{red}{12} && \textcolor{red}{14} && \textcolor{red}{15} && \textcolor{red}{15} && \textcolor{red}{15} && \textcolor{red}{15} \\
            &&&& 4 && 9 && 11 && 11 && 11 && 11 && 11 \\
            &&&&& 3 && 6 && 6 && 6 && 6 && 6 && 6 \\
            &&&&&& 0 && 0 && 0 && 0 && 0 && 0 && 0
        \end{matrix}. \]
        The subpattern $\{y_{i,j}~|~i\geq j\}$ forms $h'$.
    \end{example}

\section{Final remarks}

    It would be interesting to further explore the connection between our result and Chan-Chen-Pak-Soskin's results in \cite{chan2026correlation}. In particular, they resolved Mihalcea’s conjecture on log-supermodularity of stable Grothendieck polynomials. It is natural to ask for an analogue for skew stable Grothendieck polynomials.

    For every root system $R\subset V$, Lam--Postnikov \cite{lam2024polypositroids} defined \textit{$R$-membranes}, which are triangulated $2$-dimensional surfaces in $V$ homeomorphic to wedges of disks such that every edge of every triangle in them is a parallel translation of a root in $R$. In type A, membranes are in bijection with plabic graphs with faces labelled by integer vectors, and they showed that local moves of plabic graphs correspond to octahedron and tetrahedron moves of membranes. See also \cite{farber2016arrangements}. It would be interesting to use the technology of membranes and plabic graphs to extend the peelable tableaux rules in Section \ref{subsec:path-peelable} to arbitrary paths, as well as higher dimension paths for product of more than 2 skew Schur functions.

    Finally, in \cite{gleizer2000littlewood}, Gleizer--Postnikov studied the Littlewood--Richardson rule in the language of \textit{web functions}, which are closely related to Knutson--Tao's honeycombs and Berenstein--Zelevinsky patterns. They used it to show the duality of the Littlewood--Richardson coefficients under conjugation of partitions. It would be interesting to study the skew version of their model.

\bibliography{bibliography}

@article {GOY21,
    AUTHOR = {Gao, Shiliang and Orelowitz, Gidon and Yong, Alexander},
     TITLE = {Newell-{L}ittlewood numbers},
   JOURNAL = {Trans. Amer. Math. Soc.},
  FJOURNAL = {Transactions of the American Mathematical Society},
    VOLUME = {374},
      YEAR = {2021},
    NUMBER = {9},
     PAGES = {6331--6366},
      ISSN = {0002-9947,1088-6850},
   MRCLASS = {05E10},
  MRNUMBER = {4302162},
MRREVIEWER = {Himmet\ Can},
       DOI = {10.1090/tran/8375},
       URL = {https://doi.org/10.1090/tran/8375},
}

@article {LPP07,
    AUTHOR = {Lam, Thomas and Postnikov, Alexander and Pylyavskyy, Pavlo},
     TITLE = {Schur positivity and {S}chur log-concavity},
   JOURNAL = {Amer. J. Math.},
  FJOURNAL = {American Journal of Mathematics},
    VOLUME = {129},
      YEAR = {2007},
    NUMBER = {6},
     PAGES = {1611--1622},
      ISSN = {0002-9327,1080-6377},
   MRCLASS = {05E05},
  MRNUMBER = {2369890},
MRREVIEWER = {Riccardo\ Biagioli},
       DOI = {10.1353/ajm.2007.0045},
       URL = {https://doi.org/10.1353/ajm.2007.0045},
}

@misc{BHKKL25,
      title={Representation theory for polymatroids}, 
      author={Matthew Baker and June Huh and Donggyu Kim and Mario Kummer and Oliver Lorscheid},
      year={2025},
      eprint={2507.14718},
      archivePrefix={arXiv},
      primaryClass={math.CO},
      url={https://arxiv.org/abs/2507.14718}, 
}

@book {M03,
    AUTHOR = {Murota, Kazuo},
     TITLE = {Discrete convex analysis},
    SERIES = {SIAM Monographs on Discrete Mathematics and Applications},
 PUBLISHER = {Society for Industrial and Applied Mathematics (SIAM),
              Philadelphia, PA},
      YEAR = {2003},
     PAGES = {xxii+389},
      ISBN = {0-89871-540-7},
   MRCLASS = {90-02 (52-02 90C27 90C46 91B02)},
  MRNUMBER = {1997998},
MRREVIEWER = {Ulrich\ Faigle},
       DOI = {10.1137/1.9780898718508},
       URL = {https://doi.org/10.1137/1.9780898718508},
}

@article{remmel1984multiplying,
  title={Multiplying {S}chur functions},
  author={Remmel, Jeffrey B. and Whitney, Roger},
  journal={Journal of Algorithms},
  volume={5},
  number={4},
  pages={471--487},
  year={1984},
  publisher={Elsevier}
}

@article{knutson1999honeycomb,
  title={The honeycomb model of {$GL_n(\mathbb{C})$} tensor products {I}: {P}roof of the saturation conjecture},
  author={Knutson, Allen and Tao, Terence},
  journal={Journal of the American Mathematical Society},
  volume={12},
  number={4},
  pages={1055--1090},
  year={1999}
}

@article{knutson2003positive,
  title={A positive proof of the {L}ittlewood-{R}ichardson rule using the octahedron recurrence},
  author={Knutson, Allen and Tao, Terence and Woodward, Christopher T},
  journal={arXiv preprint math/0306274},
  year={2003}
}

@article{lam2007alcoved,
  title={Alcoved polytopes, {I}},
  author={Lam, Thomas and Postnikov, Alexander},
  journal={Discrete \& Computational Geometry},
  volume={38},
  pages={453--478},
  year={2007},
  publisher={Springer}
}

@article{lam2018alcoved,
  title={Alcoved polytopes {II}},
  author={Lam, Thomas and Postnikov, Alexander},
  journal={Lie Groups, Geometry, and Representation Theory: A Tribute to the Life and Work of Bertram Kostant},
  pages={253--272},
  year={2018},
  publisher={Springer}
}

@article{dobrovolska2007products,
  title={On products of {$\mathfrak{sl}_n$} characters and support containment},
  author={Dobrovolska, Galyna and Pylyavskyy, Pavlo},
  journal={Journal of Algebra},
  volume={316},
  number={2},
  pages={706--714},
  year={2007},
  publisher={Elsevier}
}

@article{nguyen2025temperley,
  title={Temperley--{L}ieb Crystals},
  author={Nguyen, Son and Pylyavskyy, Pavlo},
  journal={International Mathematics Research Notices},
  volume={2025},
  number={7},
  year={2025},
  publisher={Oxford University Press}
}

@phdthesis{pylyavskyy2007comparing,
  title={Comparing products of {S}chur functions and quasisymmetric functions},
  author={Pylyavskyy, Pavlo},
  year={2007},
  school={Massachusetts Institute of Technology}
}

@article{nguyen2025shuffle,
  title={Shuffle {T}ableaux, {L}ittlewood--{R}ichardson {C}oefficients, and {S}chur {L}og-{C}oncavity},
  author={Nguyen, Chau and Nguyen, Son and Woodruff, Dora},
  journal={arXiv preprint arXiv:2506.00349},
  year={2025}
}

@article{speyer2026log,
  title={L-log-concavity and a proof of the conjecture of {L}am, {P}ostnikov and {P}ylyavskyy},
  author={Speyer, David E},
  journal={arXiv preprint arXiv:2601.05007},
  year={2026}
}

@article{buch2000saturation,
  title={The saturation conjecture (after {A}. {K}nutson and {T}. {T}ao), with an appendix by {W}illiam {F}ulton},
  author={Buch, Anders Skovsted},
  journal={ENSEIGNEMENT MATHEMATIQUE},
  volume={46},
  number={1/2},
  pages={43--60},
  year={2000},
  publisher={SWETS \& ZEITLINGER}
}

@article{henriques2006octahedron,
  title={The octahedron recurrence and {$\mathfrak{gl}_n$} crystals},
  author={Henriques, Andr{\'e} and Kamnitzer, Joel},
  journal={Advances in Mathematics},
  volume={206},
  number={1},
  pages={211--249},
  year={2006},
  publisher={Elsevier}
}

@inproceedings{newell1951modification,
  title={Modification rules for the orthogonal and symplectic groups},
  author={Newell, Martin J},
  booktitle={Proceedings of the Royal Irish Academy. Section A: Mathematical and Physical Sciences},
  volume={54},
  pages={153--163},
  year={1951},
  organization={JSTOR}
}

@article{littlewood1958products,
  title={Products and plethysms of characters with orthogonal, symplectic and symmetric groups},
  author={Littlewood, Dudley E},
  journal={Canadian Journal of Mathematics},
  volume={10},
  pages={17--32},
  year={1958},
  publisher={Cambridge University Press}
}

@article {gao2022newell,
    AUTHOR = {Gao, Shiliang and Orelowitz, Gidon and Yong, Alexander},
     TITLE = {Newell-{L}ittlewood numbers {II}: extended {H}orn
              inequalities},
   JOURNAL = {Algebr. Comb.},
  FJOURNAL = {Algebraic Combinatorics},
    VOLUME = {5},
      YEAR = {2022},
    NUMBER = {6},
     PAGES = {1287--1297},
      ISSN = {2589-5486},
   MRCLASS = {05E10 (20M99)},
  MRNUMBER = {4529925},
MRREVIEWER = {Himmet\ Can},
       DOI = {10.5802/alco.217},
       URL = {https://doi.org/10.5802/alco.217},
}

@article {gao2025newell,
    AUTHOR = {Gao, Shiliang and Orelowitz, Gidon and Ressayre, Nicolas and
              Yong, Alexander},
     TITLE = {Newell-{L}ittlewood numbers {III}: {E}igencones and
              {GIT}-semigroups},
   JOURNAL = {Compos. Math.},
  FJOURNAL = {Compositio Mathematica},
    VOLUME = {161},
      YEAR = {2025},
    NUMBER = {5},
     PAGES = {1054--1074},
      ISSN = {0010-437X,1570-5846},
   MRCLASS = {22E46 (05E10 14M15)},
  MRNUMBER = {4950555},
MRREVIEWER = {Haian\ He},
       DOI = {10.1112/s0010437x24007759},
       URL = {https://doi.org/10.1112/s0010437x24007759},
}

@article {min2025proof,
    AUTHOR = {Min, Jaewon},
     TITLE = {Proof of the {N}ewell-{L}ittlewood saturation conjecture},
   JOURNAL = {S\'em. Lothar. Combin.},
  FJOURNAL = {S\'eminaire Lotharingien de Combinatoire},
    VOLUME = {93B},
      YEAR = {2025},
     PAGES = {Art. 44, 12},
      ISSN = {1286-4889},
   MRCLASS = {05E10},
  MRNUMBER = {4972054},
}

@article{koike1987young,
  title={Young-diagrammatic methods for the representation theory of the classical groups of type {$B_n$, $C_n$, $D_n$}},
  author={Koike, Kazuhiko and Terada, Itaru},
  journal={Journal of Algebra},
  volume={107},
  number={2},
  pages={466--511},
  year={1987},
  publisher={Elsevier BV}
}

@book{weyl1946classical,
  title={The classical groups: their invariants and representations},
  author={Weyl, Hermann},
  volume={1},
  year={1946},
  publisher={Princeton university press}
}

@article {king2008schur,
    AUTHOR = {King, Ronald C. and Welsh, Trevor A. and van Willigenburg,
              Stephanie J.},
     TITLE = {Schur positivity of skew {S}chur function differences and
              applications to ribbons and {S}chubert classes},
   JOURNAL = {J. Algebraic Combin.},
  FJOURNAL = {Journal of Algebraic Combinatorics. An International Journal},
    VOLUME = {28},
      YEAR = {2008},
    NUMBER = {1},
     PAGES = {139--167},
      ISSN = {0925-9899,1572-9192},
   MRCLASS = {05E05 (05E10)},
  MRNUMBER = {2420783},
MRREVIEWER = {Grant\ Walker},
       DOI = {10.1007/s10801-007-0113-0},
       URL = {https://doi.org/10.1007/s10801-007-0113-0},
}

@article {purbhoo2008on,
    AUTHOR = {Purbhoo, Kevin and van Willigenburg, Stephanie},
     TITLE = {On tensor products of polynomial representations},
   JOURNAL = {Canad. Math. Bull.},
  FJOURNAL = {Canadian Mathematical Bulletin. Bulletin Canadien de
              Math\'ematiques},
    VOLUME = {51},
      YEAR = {2008},
    NUMBER = {4},
     PAGES = {584--592},
      ISSN = {0008-4395,1496-4287},
   MRCLASS = {20G05 (05E05 05E10)},
  MRNUMBER = {2462463},
MRREVIEWER = {Stuart\ Martin},
       DOI = {10.4153/CMB-2008-058-x},
       URL = {https://doi.org/10.4153/CMB-2008-058-x},
}

@article {mcnamara2009positivity,
    AUTHOR = {McNamara, Peter R. W. and van Willigenburg, Stephanie},
     TITLE = {Positivity results on ribbon {S}chur function differences},
   JOURNAL = {European J. Combin.},
  FJOURNAL = {European Journal of Combinatorics},
    VOLUME = {30},
      YEAR = {2009},
    NUMBER = {5},
     PAGES = {1352--1369},
      ISSN = {0195-6698,1095-9971},
   MRCLASS = {05E15},
  MRNUMBER = {2514658},
MRREVIEWER = {Mercedes\ H.\ Rosas},
       DOI = {10.1016/j.ejc.2008.09.026},
       URL = {https://doi.org/10.1016/j.ejc.2008.09.026},
}

@article {ballantine2014schur,
    AUTHOR = {Ballantine, Cristina and Orellana, Rosa},
     TITLE = {Schur-positivity in a square},
   JOURNAL = {Electron. J. Combin.},
  FJOURNAL = {Electronic Journal of Combinatorics},
    VOLUME = {21},
      YEAR = {2014},
    NUMBER = {3},
     PAGES = {Paper 3.46, 36},
      ISSN = {1077-8926},
   MRCLASS = {05E05 (05E10)},
  MRNUMBER = {3262283},
MRREVIEWER = {Eric\ S.\ Egge},
       DOI = {10.37236/3796},
       URL = {https://doi.org/10.37236/3796},
}

@article{chan2026correlation,
  title={Correlation inequalities for {S}chur positivity},
  author={Chan, Swee Hong and Chen, Hong and Pak, Igor and Soskin, Daniel},
  journal={arXiv preprint arXiv:2606.06688},
  year={2026}
}

@inproceedings{lam2024polypositroids,
  title={Polypositroids},
  author={Lam, Thomas and Postnikov, Alexander},
  booktitle={Forum of Mathematics, Sigma},
  volume={12},
  pages={e42},
  year={2024},
  organization={Cambridge University Press}
}

@article{farber2016arrangements,
  title={Arrangements of equal minors in the positive {G}rassmannian},
  author={Farber, Miriam and Postnikov, Alexander},
  journal={Advances in Mathematics},
  volume={300},
  pages={788--834},
  year={2016},
  publisher={Elsevier}
}

@article{gleizer2000littlewood,
  title={Littlewood-{R}ichardson coefficients via {Y}ang-{B}axter equation},
  author={Gleizer, Oleg and Postnikov, Alexander},
  journal={International Mathematics Research Notices},
  volume={2000},
  number={14},
  pages={741--774},
  year={2000},
  publisher={OUP}
}
\bibliographystyle{alpha}

\end{document}